\pgfplotsset{compat=1.10}
\def \C{\mathbb{C}}
\def \Z{\mathbb{Z}}
\def \R{\mathbb{R}}
\def \Q{\mathbb{Q}}
\def \E{\mathcal{E}}
\def \F{\mathcal{F}}
\def \O{\mathcal{O}}
\def \X{\mathfrak{X}}
\def \U{\mathfrak{U}}
\def \E{\mathcal{E}}
\def \k{\mathbb{k}}
\def \B{\mathfrak{B}}
\def \m{\mathfrak{m}}
\def \L{\mathcal{L}}
\def \aff{\textup{aff}}
\def \sph{\textup{sph}}
\def \Spec{\operatorname{Spec}}
\def \SL{\operatorname{SL}}
\def \GL{\operatorname{GL}}
\def \rank{\operatorname{rank}}
\def \val{\operatorname{val}}
\def \recc{\operatorname{recc}}
\renewcommand{\tilde}{\widetilde }
\theoremstyle{plain}
\newtheorem{theorem}{Theorem}[section]
\newtheorem{lemma}[theorem]{Lemma}
\newtheorem{proposition}[theorem]{Proposition}
\newtheorem{corollary}[theorem]{Corollary}
\newtheorem{THM}{Theorem}
\theoremstyle{definition}
\newtheorem{example}[theorem]{Example}
\newtheorem{definition}[theorem]{Definition}
\newtheorem{remark}[theorem]{Remark}
\newtheorem{DEFIN}[THM]{Definition}
\begin{document}
\title{Toric vector bundles over a discrete valuation ring and Bruhat-Tits buildings}

\author{Kiumars Kaveh}
\address{Department of Mathematics, University of Pittsburgh,
Pittsburgh, PA, USA.}
\email{kaveh@pitt.edu}

\author{Christopher Manon}
\address{Department of Mathematics, University of Kentucky, Lexington, KY, USA}
\email{Christopher.Manon@uky.edu}

\author{Boris Tsvelikhovskiy} 
\address{Department of Mathematics, University of California, Riverside, CA, USA} 
\email{borist@ucr.edu}


\thanks{The first author is partially supported by National Science Foundation Grant DMS-2101843 and a Simons Collaboration Grant (award number 714052). The second author is partially supported by National Science Foundation Grant DMS-2101911 and a Simons Collaboration Grant (award number 587209).}

\begin{abstract}
We give a classification of rank $r$ torus equivariant vector bundles $\E$ on a toric scheme $\X$ over a discrete valuation ring $\O$, in terms of graded piecewise linear maps $\Phi$ from the fan of $\X$ to the (extended) building of $\GL(r)$. This is an extension of Klyachko's classification of torus equivariant vector bundles on toric varieties over a field on one hand, and Mumford's classification of equivariant line bundles on toric schemes over $\O$ on the other hand.
We also give a simple criterion for equivariant splitting of $\E$ into a sum of toric line bundles in terms of its piecewise linear map. Among other things, this work lays the foundations for study of arithmetic geometry of toric vector bundles.
\end{abstract}

\maketitle

\tableofcontents


\section*{Introduction}
Since their inception in the early 20th century, vector bundles have been in the focus of many important developments in modern geometry. Nevertheless, we still know little about the structure of vector bundles, specially over varieties of dimension greater than $1$. Due to their combinatorial nature, equivariant vector bundles on toric varieties provide a rich class of examples that allows one to have a glimpse at the world of vector bundles on higher dimensional varieties.  

Let $T$ be an $n$-dimensional (split) algebraic torus over a field $K$ and let $X = X_\Sigma$ be a $T$-toric variety associated to a fan $\Sigma$. We recall that a \emph{torus equivariant vector bundle} (or \emph{toric vector bundle} for short) on $X$ {is a vector bundle $\E$ on $X$ together with a linear action of $T$ that lifts that of $X$}. It is well-known that toric line bundles are classified by integral piecewise linear functions on the fan $\Sigma$ of the toric variety. They are central objects that connect geometry of toric varieties with theory of convex polytopes and this connection has proven to be extremely fruitful in algebra, geometry and combinatorics.

Toric vector bundles have been famously classified by Klyachko in the remarkable work \cite{Klyachko} which has been very influential in subsequent development of the theory. His classification is in terms of certain data of compatible filtrations in a finite dimensional vector space $E$ (see Section \ref{sec-prelim-toric-vb}). We point out that the first classification of toric vector bundles goes back to Kaneyama and is in terms of certain data of cocycles (\cite{Kaneyama}). In \cite{KM-tpb, KM-tvb-val} and \cite{Payne-branched-cover}, the Klyachko data of a toric vector bundle has been interpreted as a piecewise linear map from the fan of the toric variety to the space of valuations on the vector space $E$ (the \emph{extended Tits building} of $\GL(E)$).

In this paper, we are concerned with toric schemes over \textit{a discrete valuation ring $\O$}. Basic examples of a discrete valuation ring are $\O=\k[[t]]$, the power series ring (which often is considered in tropical geometry) and $\O=\Z_p$, the ring of $p$-adic integers (which often is considered in arithmetic geometry). Toric schemes over a discrete valuation ring have been classified by Mumford in \cite[\S IV]{Kempf}. In \cite{Gubler}, Gubler extends the classification to non-discrete valuation rings.

The main result of the paper is a combinatorial classification of toric vector bundles $\E$ on a toric scheme over a discrete valuation ring $\O$. When the toric scheme is proper, the classification is in terms of \emph{piecewise affine maps} from the polyhedral complex $\Sigma_1$ of the toric scheme to the \emph{extended Bruhat-Tits building} of $\GL(r)$ where $r$ is the rank of $\E$. This on one hand extends Klyachko's classification and on the other hand extends the classification of toric line bundles on toric schemes due to Mumford (see \cite[\S IV.3(e)]{Kempf}, \cite[Section 3.6]{BPS}). 

 
The present paper can be considered as a continuation of ideas in \cite{KM-tpb} where classifying torus equivariant principal bundles (or \emph{toric principal bundles} for short) on toric varieties over a field $K$ is connected with the theory of Tits buildings of algebraic groups. The main result in \cite{KM-tpb} states that for a reductive group $G$, toric principal $G$-bundles on $X_\Sigma$ are classified by \emph{piecewise linear maps} from the fan $\Sigma$ to the \emph{extended Tits building of $G$} (also called the \emph{cone over the Tits building of $G$}). 

Several applications of the results of the present paper  are already emerging in arithmetic and combinatorial algebraic geometry. 
\begin{itemize}
\item In \cite{BKM} the results of the paper are used to give a combinatorial description of equivariant Chern classes of toric vector bundles over a DVR. We hope this to be useful in advancing our understanding of arithmetic Chern classes.  
\item In \cite{BEST}, the authors define tautological classes of matroids which are certain Chow cohomology classes of the permutahedral toric variety. They use this construction to give geometric interpretations of some important invariants of matroids and prove related conjectures. In a work in progress, we use the classification in the present paper, to define notion of a tropical toric vector bundle over a discrete valuation ring. We then use it to extend the \cite{BEST} construction and define tautological classes of \emph{valuated matroids} and their equivariant Chern classes. We hope this approach to lead to a generalization of vector bundles on metric graphs in \cite{Ulirsch} to higher dimensional tropical varieties. 
\item Extending Klyachko's classification of toric vector bundles over a field, it is desirable and ambitious to ask for a combinatorial classification of equivariant vector bundles on (normal) varieties with an action of a torus (of possibly smaller dimension than that of the variety). In \cite{DGKM}, using the results of the paper as a main ingredient, a combinatorial classification of torus equivariant vector bundles on (normal) varieties with a codimension-one torus action (complexity-one $T$-varieties) is given.
\item Similarly, the results of the paper can be used to give a classification of toric vector bundles over $\Spec(D)$ where $D$ is a Dedekind domain, e.g. the ring of integers of a number field. 
\item By considering toric vector bundles on toric schemes over higher rank local fields, the main classification in the paper can be used to suggest an extension of the notion of Bruhat-Tits building to higher dimensional local fields (cf. \cite{Parshin}).
\end{itemize}

Now we give a more detailed account of the main results of the paper. Let $K$ be a field with discrete valuation $\val: K \to \Z \cup \{\infty\}$ and let $\O$ be its discrete valuation ring. Let $T$ be a (split) torus over $\O$. Recall that a $T$-toric scheme $\X = \X_\Sigma$ over $\Spec(\O)$ is given by a fan $\Sigma$ in the upper half-space $N_\R \times \R_{\geq 0}$ where $N$ denotes the cocharacter lattice of $T$ (Section \ref{subsec-toric-schemes}).

To make the results easier to state, first we state them in the case where $\X = \X_\Sigma$ is proper over $\Spec(\O)$, or equivalently $\Sigma$ is complete, that is, the support of $\Sigma$ is the whole $N_\R \times \R_{\geq 0}$. We denote by $\Sigma_1$ (respectively $\Sigma_0$) the polyhedral complex (respectively the fan) obtained by intersecting the cones in $\Sigma$ with the hyperplane $N_\R \times \{1\}$ (respectively $N_\R \times \{0\}$). The fan $\Sigma_0$ is the \emph{recession fan} of the polyhedral complex $\Sigma_1$. The generic fiber $\X_\eta$ is the toric variety (over $K$) associated to the fan $\Sigma_0$. 

\begin{DEFIN}
\label{def-intro-add-norm}
Let $E$ be an $r$-dimensional vector space over $K$. For any $m \geq 0$, a \emph{level $m$ additive norm} on $E$ is a function $v: E \to \overline{\R} = \R \cup \{\infty\}$ satisfying the following axioms: 
\begin{itemize}
\item[(1)] $v(\lambda e) = m \val(\lambda) + v(e)$, for all $e \in E$ and $\lambda \in K$. 
\item[(2)](Non-Archimedean property) $v(e_1+e_2) \geq \min\{v(e_1), v(e_2)\}$, for all $e_1, e_2 \in E$.
\item[(3)] $v(e) = \infty$ if and only if $e = 0$. 
\end{itemize}
We will be mainly concerned with the cases where $m=0, 1$. When $m=1$ we have $v(\lambda e) = \val(\lambda) + v(e)$, and when $m=0$, we have $v(\lambda e) = v(e)$, for all $\lambda \neq 0$. 


We denote the space of all level $m$ additive norms by $\tilde{\B}_m(E)$. In particular, we denote the space of all level $1$ additive norms on $E$ by $\tilde{\B}_\aff(E) = \tilde{\B}_1(E)$. We also denote the space of all level $0$ additive norms by $\tilde{\B}_\sph(E) = \tilde{\B}_0(E)$. We call $\tilde{\B}_\aff(E)$ (respectively $\tilde{\B}_\sph(E)$), the \emph{extended Bruhat-Tits building} (respectively the \emph{extended Tits building}) of $E$. See Section \ref{sec-add-norms}, and also Section \ref{sec-building} for the connection with Tits and Bruhat-Tits buildings of $\GL(E)$. Finally we let $$\tilde{\B}(E) = \bigsqcup_{m \geq 0} \tilde{\B}_m(E),$$
\color{black}{}
and call it the \emph{total extended building} of $E$.
\end{DEFIN}

A choice of a basis $B$ for the vector $K$-space $E$, specifies a subset of $\tilde{\B}_\sph(E)$ which is an $r$-dimensional real vector space where $r =\dim(E)$.
We denote this subset by $\tilde{A}_\sph(E)$ and call it the \emph{extended apartment} associated to the basis $B$. Similarly, we have a subset of $\tilde{\B}_\aff(E)$ which is an $r$-dimensional real affine space. We denote it by $\tilde{A}_\aff(E)$ and call it the \emph{extended apartment} associated to the basis $B$.
Both $\tilde{\B}_\sph(E)$ and $\tilde{\B}_\aff(E)$ are unions of their apartments. 


Let $\Sigma_0$ and $\Sigma_1$ be as above.
In Section \ref{subsec-graded-PL-map}, we define the following notions:
\begin{itemize}
    \item A \emph{piecewise linear map} $\Phi_0$ from a fan $\Sigma_0$ to the extended Tits building $\tilde{\B}_\sph(E)$. Roughly speaking it means that $\Phi_0$ maps each cone $\sigma \in \Sigma_0$ to an extended apartment and restriction to $\sigma$ is a linear map. 
    \item A \emph{piecewise affine map} $\Phi_1$ from a polyhedral complex $\Sigma_1$ to the extended Bruhat-Tits building $\tilde{\B}_\aff(E)$. Roughly speaking it means that $\Phi_1$ maps each polyhedron $\Delta \in \Sigma_1$ to an extended apartment and restriction to $\Delta$ is an affine map.
\end{itemize}

Recall that given a piecewise affine function $\phi: |\Sigma_1| \to \R$, one can define $\phi_0: |\Sigma_0| \to \R$, the \emph{linear part of $\phi_0$}. Here $\Sigma_0$ is the collection of recession cones of the polyhedra in $\Sigma_1$. More generally, for a piecewise affine map $\Phi$, we can define $\Phi_0: |\Sigma_0| \to \tilde{\B}_\sph(E)$, the \emph{linear part of $\Phi$}. 

There is a natural notion of morphism of piecewise affine maps (Definition \ref{def-morphism-pa-map}) and piecewise affine maps to extended Bruhat-Tits buildings of finite dimensional vector spaces $E$ form a category. 

Let $\X_\Sigma$ be a toric scheme over $\Spec(\O)$ associated to a fan $\Sigma$ in $N_\R \times \R_{\geq 0}$ with corresponding polyhedral complex $\Sigma_1$ in $N_\R \times \{1\}$. 
Given a toric vector bundle $\E$ over $\X_\Sigma$, we denote the fiber $\E_{x_0}$, over the distinguished point $x_0$, by $E$. Our main result is the following.
\begin{THM}[Main theorem, first part] \label{th-main-intro}
With notation as above, let $\X_\Sigma$ be a proper toric scheme over $\Spec(\O)$. Then there is a bijection between the isomorphism classes of toric vector bundles on $\X_\Sigma$ and the piecewise affine maps from $|\Sigma_1|$ to $\tilde{\B}_\aff(E)$, for finite dimensional $K$-vector spaces $E$. Moreover, this bijection extends to an equivalence of the corresponding categories. 
\end{THM}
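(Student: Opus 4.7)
My plan is to construct mutually inverse functors $\E \mapsto \Phi_\E$ and $\Phi \mapsto \E_\Phi$, modeled on Klyachko's strategy but with additive norms in place of filtrations. For the forward direction, I would first establish a local splitting theorem: for each polyhedron $\Delta \in \Sigma_1$, the restriction of a toric vector bundle $\E$ to the corresponding $T$-invariant affine open $U_\Delta \subset \X_\Sigma$ splits $T$-equivariantly as a direct sum $\bigoplus_{i=1}^r L_i^\Delta$ of toric line bundles. This is the DVR analog of Klyachko's splitting on affine toric varieties; I expect the argument to begin with the weight decomposition of the $\O$-torsion-free, $S_\Delta$-graded module of sections, use Klyachko over the generic fiber to split there, and then lift to $\O$ by analyzing weight multiplicities across the special fiber. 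The splitting produces a basis $B_\Delta = \{b_1, \ldots, b_r\}$ of the fiber $E = \E_{x_0}$ (by specializing the $L_i^\Delta$ to $x_0$) and $r$ integral affine functions $\phi_i^\Delta : \Delta \to \R$ (the data of the line bundles), which together define an affine map $\Phi_\E|_\Delta : \Delta \to \tilde{A}_\aff(B_\Delta) \subset \tilde{\B}_\aff(E)$.

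Next I would verify that these locally defined maps glue into a single piecewise affine map $\Phi_\E : |\Sigma_1| \to \tilde{\B}_\aff(E)$. When $\Delta' \subsetneq \Delta$ is a face, restricting the splitting on $U_\Delta$ to $U_{\Delta'}$ gives a trivialization using $B_\Delta$, while the intrinsic splitting on $U_{\Delta'}$ may use a different basis $B_{\Delta'}$. Using the building axiom that any two apartments containing a common convex subset are related by an element of $\GL(E)(K)$ fixing that set, both affine descriptions define the same map $\Delta' \to \tilde{\B}_\aff(E)$, so the local data assemble coherently.

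Conversely, given $\Phi$, for each maximal $\Delta$ pick an apartment $\tilde{A}_\aff(B_\Delta)$ containing $\Phi(\Delta)$; the coordinates of $\Phi|_\Delta$ in this apartment give $r$ integral affine functions, hence $r$ toric line bundles $L_i^\Delta$ on $U_\Delta$, and one sets $\E_\Phi|_{U_\Delta} = \bigoplus_i L_i^\Delta$. On an overlap $U_\Delta \cap U_{\Delta'} = U_{\Delta \cap \Delta'}$, the change-of-basis matrix between $B_\Delta$ and $B_{\Delta'}$ (viewed in $E$) induces a $T$-equivariant isomorphism of the two local direct sums; the piecewise affine property of $\Phi$ ensures that the entries of this matrix are regular sections of the appropriate Hom line bundles on $U_{\Delta \cap \Delta'}$, so the cocycle condition holds and the local bundles glue into a global $\E_\Phi$. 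Independence from the apartment choices is automatic from the building structure.

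The hardest step will be the local equivariant splitting: while Klyachko gives a splitting over the generic fiber, one must show it extends integrally over $\O[S_\Delta]$, controlling both torsion phenomena at the special point and weight collisions in the graded module. Once this is in place, showing that the two functors are mutually inverse reduces to unwinding the constructions. Functoriality extends automatically: a $T$-equivariant bundle homomorphism $\E \to \E'$ restricts on each $U_\Delta$ to a morphism of the line-bundle decompositions, which in the building picture is exactly the combinatorial data of a morphism of piecewise affine maps, yielding the promised categorical equivalence.
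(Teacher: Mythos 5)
Your overall outline (local equivariant triviality on each invariant affine chart, an affine map for each polyhedron, gluing, inverse construction, functoriality) matches the paper's, but the two steps that carry the real content are left unproved, and your sketches for them point in a questionable direction. First, the local splitting: you propose to split $\E$ over the generic fiber via Klyachko and then ``lift to $\O$''. The obstruction to an equivariant trivialization lives precisely over the special fiber, so starting from the generic fiber is the problematic direction---an equivariant weight basis of sections over $\X_{\sigma,\eta}$ need not extend to an $S_\sigma$-module basis. The paper argues from the other end: it restricts $\E$ to the unique closed $T_o$-orbit $O_\sigma$ in the special fiber, where the bundle is equivariantly trivial, lifts the weight sections through the surjection $H^0(\X_\sigma,\E)\to H^0(O_\sigma,\E_{|O_\sigma})$ (coherent sheaves on an affine scheme), and then shows the lifts freely generate because the locus where they are linearly dependent is closed and $T$-invariant, hence would have to contain $O_\sigma$, a contradiction. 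Without an argument of this kind, what you yourself call the hardest step is simply missing.

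Second, gluing and well-definedness. Appealing to the building axiom that two apartments meeting along a common convex set are related by an isomorphism fixing it does not show that the two affine maps you obtain on $\Delta\cap\Delta'$ (one from the splitting on $U_\Delta$, one from $U_{\Delta'}$) are literally equal as maps into the set of additive norms; for that you must show the map is intrinsic, i.e., recoverable from the weight spaces $\F_{\sigma,u}=\chi^u\Lambda_{\sigma,u}$ of the module of sections. The subtlety your proposal never meets is that vertices of $\Sigma_1$ are in general non-lattice rational points, so the lattices $\Lambda_{v,u}$ only determine $\Phi_\Delta(v)$ through the expressions $\lceil \Phi_\Delta(v)(\cdot)-\langle u,v\rangle\rceil$; the paper's gluing lemma varies $u$ over all of $M$ to force the norms themselves to coincide, and separately matches the linear parts along the recession cone of $\Delta\cap\Delta'$ (the rays in $N_\R\times\{0\}$, which carry the Klyachko filtrations of the generic fiber) before concluding equality of the affine maps on all of $\Delta\cap\Delta'$. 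The same issue undermines your functoriality paragraph: an equivariant morphism need not respect the line-bundle decompositions, and the correct criterion is $F(\Lambda_{\rho,u})\subset\Lambda'_{\rho,u}$ for all $\rho$ and $u$, which the paper proves equivalent to $\Phi\preceq F^*\Phi'$ by another ceiling argument, rather than the componentwise statement you assert.
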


The moral of Theorem \ref{th-main-intro} is that the extended Bruhat-Tits building $\tilde{\B}_\aff(E)$ can be considered as an analogue of the classifying space of $\GL(E)$ for rank $r$ toric vector bundles over proper toric schemes. 

\begin{THM}[Main theorem, second part]  \label{th-main2-intro}
Let $\E$ be a toric vector bundle over a proper toric scheme $\X_\Sigma$ with the corresponding piecewise affine map $\Phi: |\Sigma_1| \to \tilde{\B}_\aff(E)$. Then the generic fiber $\E_\eta$, which is a (usual) toric vector bundle over the toric variety $\X_\eta = X_{\Sigma_0}$ over the field $K$, corresponds to $\Phi_0: |\Sigma_0| \to \tilde{\B}_\sph(E)$, the linear part of 
$\Phi$, which is a piecewise linear map.  
\end{THM}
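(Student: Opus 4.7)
The plan is to trace the correspondence of Theorem \ref{th-main-intro} through the passage to the generic fiber, using the fact that $\X_\eta = X_{\Sigma_0}$ and that the affine chart of $\X_\Sigma$ attached to the cone over a polyhedron $\Delta \in \Sigma_1$ restricts on the generic fiber to the affine chart of $X_{\Sigma_0}$ attached to $\recc(\Delta)$. Consequently, any $T$-equivariant local trivialization of $\E$ over the chart of $\Delta$ restricts to a $T$-equivariant trivialization of $\E_\eta$ over the chart of $\recc(\Delta)$, with the same underlying decomposition of $E = \E_{x_0}$ into $T$-eigenlines.

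First I would recall that the construction of $\Phi$ in Theorem \ref{th-main-intro} proceeds, on each polyhedron $\Delta \in \Sigma_1$, by fixing such a trivialization and reading off, at a point $v \in \Delta$, the additive norm on $E$ induced by the $T$-weights. More concretely, if $B_\Delta = \{b_1, \ldots, b_r\}$ is the resulting $T$-eigenbasis of $E$ with characters $m_1, \ldots, m_r$, then $\Phi|_\Delta$ is the affine map $\Delta \to \tilde{A}_\aff(B_\Delta)$ whose value at $v$ is the additive norm adapted to $B_\Delta$ sending $b_i$ to $\langle m_i, v\rangle$ (up to a fixed translation determined by the choice of trivialization).

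Next I would analyze the linear part. Writing $\Phi|_\Delta(v) = \Phi|_\Delta(v_0) + L_\Delta(v - v_0)$ with $v_0 \in \Delta$ and $L_\Delta$ linear, the restriction of $L_\Delta$ to $\recc(\Delta)$ lands in the apartment of $\tilde{\B}_\sph(E)$ associated to $B_\Delta$, via the canonical identification of the translation space of an affine apartment with the corresponding spherical apartment. By construction, $L_\Delta(u)$ is the prevaluation on $E$ adapted to $B_\Delta$ that sends $b_i$ to $\langle m_i, u\rangle$. On the other hand, the Klyachko-style classification for $\E_\eta$ (as recast in \cite{KM-tpb} for toric principal bundles, specialized to $\GL(E)$) assigns to $u \in \recc(\Delta)$ the prevaluation on $E$ read off from the same $T$-weights of the restricted trivialization, so the two prevaluations coincide.

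The main obstacle is ensuring that these local identifications are coherent as $\Delta$ varies. The piecewise affine data $\Phi$ glues across polyhedra through their common faces via shared subapartments in $\tilde{\B}_\aff(E)$, and the linear-part operation must send these gluings to the corresponding gluings on the recession fan. This follows from functoriality of the linear part together with the compatibility of recession cones with face relations in $\Sigma_1$: if $\Delta' \subset \Delta$ is a face, then $\recc(\Delta') \subset \recc(\Delta)$, and the linear parts of the affine maps automatically agree on the common recession cone because the underlying trivializations are compatible. Therefore, once the identification is verified on each maximal polyhedron, the global map $\Phi_0: |\Sigma_0| \to \tilde{\B}_\sph(E)$ obtained by taking linear parts coincides with the piecewise linear map classifying $\E_\eta$.
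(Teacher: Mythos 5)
Your proposal is correct and takes essentially the same route as the paper: on each chart you invoke equivariant local triviality to obtain an eigenbasis $B_\Delta$ with weights $u_{\Delta,i}$, observe that the same data describes $\E_\eta$ over the chart of $\recc(\Delta)$ (in the paper this appears as the computation of the graded pieces $\Lambda_{\rho,u}$ for the horizontal rays $\rho$, which are exactly the Klyachko filtrations), and then identify the linear part of $\Phi_{|\Delta}$ with the prevaluations classifying $\E_\eta$ as in \cite[Example 2.8]{KM-tpb}. The coherence issue you discuss at the end requires no separate argument, since the well-definedness of $\Phi$ (hence of its linear part $\Phi_0$) is already established in the proof of the first main theorem via Lemma \ref{lem-gluing} and the agreement of linear parts on common recession cones.
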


Theorems \ref{th-main-intro}
and \ref{th-main2-intro} extend the well-known results on classification of torus equivariant line bundles on a toric scheme (\cite[Section 3.3]{BPS}).

When $\X_\Sigma$ is not proper over $\Spec(\O)$ (equivalently, support of $\Sigma$ is not the whole $N_\R \times \R_{\geq 0}$) the main theorems still work but beside the piecewise affine map $\Phi_1: |\Sigma_1| \to \tilde{\B}_\aff(E)$ we also need the data of the piecewise linear map $\Phi_0: |\Sigma_0| \to \tilde{\B}_\sph(E)$ which is compatible with $\Phi_1$. It is convenient to encode the data of $\Phi_1$ and $\Phi_0$ in the notion of a \emph{graded piecewise linear map} $\Phi$ from the fan $\Sigma$ to the total extended building $\tilde{\B}(E) = \bigsqcup_{m \geq 0} \tilde{\B}_m(E)$. We say that a map $\Phi$ is a graded pieceiwse linear map if it is piecewise linear in the sense of Definition \ref{def-PL-map}, and moreover for any $m \geq 0$, it sends every point $(x, m) \in |\Sigma|$ to $\tilde{\B}_m(E)$ (see Definition \ref{def-graded-PL-map}).


\begin{THM}[Main theorem, general case] Suppose $\Sigma$ is a fan in $N_\R \times \R_{\geq 0}$ that is not necessarily complete. Then there is an equivalence of categories between the category of toric vector bundles on the toric scheme $\X_\Sigma$ and the category of graded pieceiwse linear maps $\Phi: |\Sigma| \to \tilde{\B}(E)$ for finite dimensional $K$-vector spaces $E$. Moreover, the toric vector bundle $\E_\eta$ over the generic point corresponds to the piecewise linear map $\Phi_0 := \Phi_{|\Sigma_0}: |\Sigma_0| \to \tilde{\B}_\sph(E)$. 
\end{THM}

In Section \ref{sec-splitting} we give a criterion for equivariant splitting of a toric vector bundle on a toric scheme $\X_\Sigma$. It is an extension of Klyachko's criterion for equivariant splitting of toric vector bundles over a field to the setting of toric schemes over a DVR. We use this criterion to give an example of a rank $2$ toric vector bundle on the canonical model of $\mathbb{P}^1$ over the $p$-adic integers $\Z_2$ that does not split (Example \ref{ex-split-tvb}). 

As Klyachko shows, the splitting of low rank toric vector bundles over projective space (over a field) follows from a Helly-type theorem for the Tits building of $\GL(E)$ (\cite[Theorem 6.1.2]{Klyachko}). In Example \ref{ex-Helly-GL2}, we observe that an analogue of Klyachko's Helly's theorem holds for the Bruhat-Tits building of $\GL(2, \Q_p)$ (which is a $(p+1)$-regular infinite tree). In a future work we extend Klyachko's Helly's theorem to the Bruhat-Tits building of $\GL(E)$, for any finite dimensional $K$-vector space $E$. 
We end the introduction by proposing some problems for further study:
\begin{itemize}
    \item Give a combinatorial characterization of ampleness of a toric vector bundle over a DVR in terms of its piecewise affine map, analogous to \cite{HMP, KM-tvb-val}.
    \item Construct analogue of parliament of polytopes (\cite{DJS}) for a toric vector bundle over a DVR capturing the space of global sections and give a criterion for global generation. The paper \cite{KU} which concerns the line bundle case is relevant to this.
    \item Give criterion for (semi)stability of a toric vector bundle over a DVR in terms of its piecewise affine map.
\end{itemize}

\bigskip
\noindent{\bf Acknowledgement:} We would like to thank Huayi Chen, Martin Sombra, Eric Katz and specially Sam Payne and Jose Burgos Gil for very useful email correspondence. In particular, Jose Burgos Gil suggested to us to work with the space of level $m$ additive norms. The first two authors are supported by an NSF collaborative research grant (DMS-2101911 and DMS-2101843) as well as Simons Collaboration Grants for Mathematicians.

\vspace{.3cm}
\noindent{\bf Notation:}
\begin{itemize}
\item $K$, a discretely valued field with valuation $\val: K \to \overline{\Z} = \Z \cup \{\infty\}$
\item $\O$, the valuation ring of $(K, \val)$
\item $\mathfrak{m}$, the maximal ideal in $\O$
\item $\varpi$, a uniformizer for $\O$ i.e. a generator of $\mathfrak{m}$ 
\item $\k = \O/\mathfrak{m}$, the residue field of $\O$
\item $\Spec(\O)$, the affine scheme associated to $\O$
\item $\eta$, the generic point of $\Spec(\O)$, i.e. the point associated to the prime ideal $\{0\}$
\item $s$, the special point of $\Spec(\O)$, i.e. the point associated to the maximal ideal $\mathfrak{m}$
\item For a scheme $\X$ over $\Spec$, we denote the fibers of $\X$ over $\eta$ and $s$ by $\X_\eta$ and $\X_s$ respectively.
\item $T \cong \mathbb{G}_m^n$, a split torus over $\O$. We have $T_\eta \cong (K^\times)^n$ and $T_s \cong (\k^\times)^n$.
\item $N$ and $M$, cocharacter and character lattices of $T$, with $N_\R = N \otimes \R$ and $M_\R = M \otimes \R$
\item $\tilde{N} = N \times \Z$ and $\tilde{M} = M \times \Z$ 
\item 
For a cone $\sigma \subset N_\R \times \R_{\geq 0}$, we let $\Delta =  \sigma \cap (N_\R \times \{1\})$. Conversely, for a polyhedron $\Delta \subset N_\R \times \{1\}$ we let $\sigma \subset N_\R \times \R_{\geq 0}$ be the cone over $\Delta$.
\item $\Sigma$, a fan in $N_\R \times \R_{\geq 0}$
\item $\Sigma_1$, a polyhedral complex in $N_\R \times \{1\}$ obtained by intersecting the cones in $\Sigma$ with $N_\R \times \{1\}$
\item $\X = \X_{\Sigma}$, toric scheme associated to a fan $\Sigma$ in $N_\R \times \R_{\geq 0}$.
\item $\tilde{\B}_m(E)$, the collection of level $m$ additive norms on a $K$-vector space $E$ (see Definition \ref{def-level-m-add-norm}). We call the union $\tilde{\B}(E) = \bigsqcup_{m \geq 0} \tilde{\B}_m(E)$, the \emph{extended total Bruhat-Tits building} of $E$. 
\item In particular, $\tilde{\B}_\aff(E) = \tilde{\B}_1(E)$ is the collection of all additive norms on a $K$-vector space $E$ (see Definition \ref{def-add-norm}). It is called the \emph{extended Bruhat-Tits building} of $\GL(E)$. 
\item Also, $\tilde{\B}_\sph(E) = \tilde{\B}_0(E)$ is the collection of all valuations on a $K$-vector space $E$ (see Definition \ref{def-val}). We call it the \emph{extended Tits building} of $\GL(E)$. 
\end{itemize}

\section{Preliminaries on toric varieties and toric schemes}   \label{sec-prelim}
\subsection{Klyachko's classification of toric vector bundles over a field}   \label{sec-prelim-toric-vb}
In this section we review Klyachko's classification of toric vector bundles in \cite{Klyachko}.
Let $T \cong \mathbb{G}_m^n$ denote an $n$-dimensional algebraic torus over a field $\k$. We let $M$ and $N$ denote its character and cocharacter lattices respectively. We also denote by $M_\R$ and $N_\R$ the $\R$-vector spaces spanned by $M$ and $N$. For a cone $\sigma \in N_\R$ let $M_\sigma$ be the quotient lattice:
$$M_\sigma = M / (\sigma^\perp \cap M).$$
Let $\Sigma$ be a (finite rational polyhedral) fan in $N_\R$ and let $X_\Sigma$ be the corresponding toric variety. Also $X_\sigma$ denotes the invariant affine open subset in $X_\Sigma$ corresponding to a cone $\sigma \in \Sigma$. We denote the support of $\Sigma$, that is the union of all the cones in $\Sigma$, by $|\Sigma|$. For each $i$, $\Sigma(i)$ denotes the subset of $i$-dimensional cones in $\Sigma$. In particular, $\Sigma(1)$ is the set of rays in $\Sigma$. For each ray $\rho \in \Sigma(1)$ we let $\nu_\rho$ be the primitive vector along $\rho$, i.e. $\nu_\rho$ is the unique vector on $\rho$ whose integral length is equal to $1$.

We say that $\E$ is a {\it toric vector bundle} on $X_\Sigma$ if $\E$ is a vector bundle on $X_\Sigma$ equipped with a $T$-linearization. This means that there is an action of $T$ on $\E$ which lifts the $T$-action on $X_\Sigma$ such that the action map $\E_x \to \E_{t \cdot x}$ for any $t \in T$, $x \in X_\Sigma$ is linear. 

We fix a point $x_0 \in X_0 \subset X_\Sigma$ in the dense orbit $X_0$. We often identify $X_0$ with $T$ and think of $x_0$ as the identity element in $T$. We let $E = \E_{x_0}$ denote the fiber of $\E$ over $x_0$. It is an $r$-dimensional vector space where $r = \rank(\E)$. 

For each cone $\sigma \in \Sigma$ we have an invariant open subset $X_\sigma \subset X_\Sigma$. The space of sections $H^0(X_\sigma, \E)$ is a $T$-module. We let  $H^0(X_\sigma, \E)_u \subseteq H^0(X_\sigma, \E)$ be the weight space corresponding to a weight $u \in M$; these spaces define the weight decomposition: 

$$H^0(X_\sigma, \E) = \bigoplus_{u \in M} H^0(X_\sigma, \E)_u.$$ Every section in $H^0(X_\sigma, \E)_u$ is determined by its value at $x_0$.  Thus, by restricting sections to $E = \E_{x_0}$, we get an embedding $H^0(X_\sigma, \E)_u \hookrightarrow E$. Let us denote the image of $H^0(X_\sigma, \E)_u$ in $E$ by $E_u^\sigma$. Note that if $u' \in \sigma^\vee \cap M$ then \textit{multiplication by the character}  $\chi^{u'}$ 
gives an injection $H^0(X_\sigma, \E)_u \hookrightarrow H^0(X_\sigma, \E)_{u-u'}$. Moreover, the multiplication map by $\chi^{u'}$ commutes with the evaluation at $x_0$ and hence induces an inclusion $E_u^\sigma \subset E_{u-u'}^\sigma$. If $u' \in \sigma^\perp$ then these maps are isomorphisms and thus $E_u^\sigma$ depends only on the class $[u] \in M_\sigma = M/(\sigma^\perp \cap M)$. For a ray $\rho \in \Sigma(1)$ we write $$E^\rho_j = E_u^\rho,$$ for any $u \in M$ with $\langle u, \nu_\rho \rangle = j$ (all such $u$ define the same class in $M_\rho$). Equivalently, one can define $E^\rho_u$ as follows (see \cite[\S 0.1]{Klyachko}). Pick a point $x_\rho$ in the orbit $O_\rho$ and let:
$$E^\rho_u = \{ e \in E \mid \lim_{t \cdot x_0 \to x_\rho} \chi^u(t)^{-1}(t \cdot e) \textup{ exists in } \E \},$$
where $t$ varies in $T$ in such a way that $t \cdot x_0$ approaches $x_\rho$. One checks that $E^\rho_u$ does not depend on the choice of $x_\rho$ and only depends on $j = \langle u, \nu_\rho \rangle$. 

We thus have a decreasing filtration on $E$:
\begin{equation}  \label{equ-filt-E-rho}
\cdots \supset E^\rho_{j-1} \supset E^\rho_j \supset E^\rho_{j+1} \supset \cdots
\end{equation}

An important step in the classification of toric vector bundles is that a toric vector bundle over an affine toric variety is {\it equivariantly trivial}. That is, it decomposes $T$-equivariantly as a sum of trivial line bundles. Let $\sigma$ be a strictly convex rational polyhedral cone with corresponding affine toric variety $X_\sigma$. Given $u \in M$, let $\L_u$ be the trivial line bundle $X_\sigma \times \mathbb{A}^1$ on $X_\sigma$ where $T$ acts on $\mathbb{A}^1$ via the character $-u$. One observes that the toric line bundle $\L_u$ in fact only depends on the class $[u] \in M_\sigma$. Hence we also denote this line bundle by $\L_{[u]}$. One has the following (\cite[Proposition 2.1.1]{Klyachko}):
\begin{proposition}   \label{prop-toric-vb-over-affine-equiv-trivial}
Let $\E$ be a toric vector bundle of rank $r$ on an affine toric variety $X_\sigma$. Then $\E$ splits equivariantly into a sum of line bundles $\L_u$: 
$$\E = \bigoplus_{i=1}^r \L_{[u_i]}$$
where $[u_i] \in M_\sigma$.
\end{proposition}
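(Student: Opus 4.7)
The plan is to translate the proposition into a statement in commutative algebra via the global-sections functor and then conclude by a graded Nakayama argument at a $T$-fixed point of $X_\sigma$.

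\emph{Reformulation.} Write $A = \k[S_\sigma]$ for $S_\sigma = \sigma^\vee \cap M$, so that $X_\sigma = \Spec(A)$, with its natural $M$-grading $A = \bigoplus_{u \in S_\sigma} \k\chi^u$. The functor $\E \mapsto F := \Gamma(X_\sigma, \E)$ identifies toric vector bundles on $X_\sigma$ with finitely generated $M$-graded projective $A$-modules, sending $\L_{[u]}$ to the rank-one graded free module $A$ with grading shifted by $u$ (with sign dictated by the convention that $T$ acts on the fiber of $\L_u$ through $-u$). Thus the proposition reduces to the purely algebraic claim: every finitely generated $M$-graded projective $A$-module $F$ of rank $r$ is graded-isomorphic to a direct sum of $r$ rank-one shifts of $A$.

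\emph{Fixed-point reduction and graded Nakayama.} Assume first that $\sigma$ is top-dimensional; the lower-dimensional strictly convex case reduces to this via a decomposition $X_\sigma \cong X_{\bar\sigma} \times (\k^\times)^{n - \dim\sigma}$ in which equivariant vector bundles on the free torus factor decompose into characters. For $\sigma$ top-dimensional and strictly convex, fix a lattice point $n_0 \in \mathrm{relint}(\sigma) \cap N$ and refine the $M$-grading to a $\Z_{\geq 0}$-grading of $A$ via $u \mapsto \langle u, n_0\rangle$; strict convexity of $\sigma$ guarantees $\langle u, n_0 \rangle > 0$ for all nonzero $u \in S_\sigma$, so $A_0 = \k$, and the irrelevant ideal $\mathfrak{m}_\sigma := \bigoplus_{d>0} A_d$ is the maximal ideal of the unique $T$-fixed point $x_\sigma \in X_\sigma$. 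The fiber $V := F/\mathfrak{m}_\sigma F$ is an $r$-dimensional $T$-representation (using projectivity of $F$ at $x_\sigma$), and therefore decomposes into one-dimensional weight spaces $V = \bigoplus_{i=1}^r \k v_i$ of weights $-u_i \in M$. Since $F \twoheadrightarrow V$ is $M$-homogeneous, each $v_i$ admits a homogeneous lift $s_i \in F$ of weight $-u_i$, and graded Nakayama, valid because $A$ is non-negatively graded with $A_0 = \k$, forces the $s_i$ to generate $F$, producing a graded $A$-linear surjection $\bigoplus_{i=1}^r A(-u_i) \twoheadrightarrow F$.

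\emph{Conclusion and main obstacle.} To upgrade this to an isomorphism, observe that both source and target are finitely generated projective $A$-modules of rank $r$ over the integral domain $A$; localizing at the generic point yields a surjection of $r$-dimensional vector spaces over $\operatorname{Frac}(A)$, which must be an isomorphism, and torsion-freeness of the target then forces the kernel to vanish globally. The principal subtlety lies in the Nakayama step, which requires the nonstandard $\Z_{\geq 0}$-grading obtained by pairing with an interior lattice point of $\sigma$; strict convexity of $\sigma$ is precisely what ensures $A_0 = \k$ and hence that this grading is positive. Once this grading is in place, the lifting of weight vectors and the invocation of Nakayama are formal, and the generic-point rank comparison closes the argument.
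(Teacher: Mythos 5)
Your argument is correct in substance, but it follows a different route from the one the paper has in mind. The paper quotes this statement from Klyachko and models its own DVR analogue, Proposition \ref{prop-local-equiv-trivial}, on the same strategy: restrict $\E$ to the unique closed $T$-orbit $O_\sigma \subset X_\sigma$ (an orbit, not necessarily a fixed point), where equivariant triviality is automatic; lift a basis of weight sections through the surjection $H^0(X_\sigma,\E)\to H^0(O_\sigma,\E_{|O_\sigma})$, which is graded because restriction is equivariant; and then note that the locus where the lifted sections are dependent is a closed $T$-invariant subset not containing $O_\sigma$, hence empty. That argument requires no reduction to full-dimensional cones, whereas you must first split off the torus factor $X_\sigma\cong X_{\bar\sigma}\times(\k^\times)^{n-\dim\sigma}$ in order to have a $T$-fixed point and a positive grading; the fact you need there is that restriction to $X_{\bar\sigma}\times\{1\}$ is an equivalence on equivariant bundles (descent along the free translation action of the torus factor), which is true but stated rather loosely as ``bundles on the free torus factor decompose into characters.'' In exchange, your graded-Nakayama argument at the fixed point is clean, purely module-theoretic, and makes the multiset of weights $u_i$ appear transparently as the $T$-weights of the fiber at $x_\sigma$. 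Two small repairs: the kernel of your surjection $\bigoplus_{i=1}^r A(-u_i)\twoheadrightarrow F$ sits inside the \emph{source}, so it is torsion-freeness of the source (or, more simply, the fact that the surjection splits because $F$ is projective, exhibiting the kernel as a rank-zero direct summand of a free module) that forces it to vanish, not torsion-freeness of the target; and in the Nakayama step you should record that the auxiliary $\Z$-grading on $F$ induced by pairing with $n_0$ is bounded below because $F$ is finitely generated, which is what legitimizes graded Nakayama outside the local setting.
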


We denote the multiset $\{ [u_1], \ldots, [u_r]\} \subset M_\sigma$ by $u(\sigma)$. The above shows that the filtrations $(E^\rho_i)_{i \in \Z}$, $\rho \in \Sigma(1)$, satisfy the following compatibility condition: 
There is a decomposition $E = \bigoplus_{j=1}^r L_j$ of $E$ into a direct sum of $1$-dimensional subspaces $L_j$ and a multiset $u(\sigma) = \{ [u_1], \ldots, [u_r]\} \subset M_\sigma$ such that for any ray $\rho \in \sigma(1)$ we have:
\begin{equation}  \label{equ-Klyachko-comp-condition}
E^\rho_i = \sum_{\langle u_j, \nu_\rho \rangle \geq i}  L_j.
\end{equation}

{We call a collection of decreasing $\Z$-filtrations $\{(E_i^\rho) \mid \rho \in \Sigma(1)\}$ a \emph{compatible collection of filtrations} if for any $\sigma \in \Sigma$ there is a direct sum decomposition $E=\bigoplus_{j=1}^r L_j$ of $E$ into $1$-dimensional subspaces and a multiset $\{ [u_1], \ldots, [u_r]\} \subset M_\sigma$ such that \eqref{equ-Klyachko-comp-condition} holds. We also need the notion of a morphism between compatible filtrations. Let $E$, $E'$ be finite dimensional vector spaces with compatible collections of filtrations $\{(E^\rho_i) \mid \rho \in \Sigma(1)\}$ and 
$\{({E'}^\rho_i) \mid \rho \in \Sigma(1)\}$ respectively. A \emph{morphism} between these compatible collections is a linear map $F: E \to E'$ such that $F(E_i^\rho) \subset {E'}_i^\rho$, for all $i \in \Z$ and $\rho \in \Sigma(1)$.} 



The following is Klyachko's result on classification of toric vector bundles (see \cite[Theorem 2.2.1]{Klyachko}).
\begin{theorem}[Klyachko]   \label{th-Klyachko}
The category of toric vector bundles $\E$ on $X_\Sigma$ is naturally equivalent to the category of compatible collections of filtrations on finite dimensional vector spaces $E$. 
\end{theorem}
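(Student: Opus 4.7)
The plan is to exhibit the functor defined in the paragraphs preceding the theorem — sending a toric vector bundle $\E$ to its associated collection $\{(E^\rho_i) \mid \rho \in \Sigma(1)\}$ together with the multisets $u(\sigma) \subset M_\sigma$ — and to show that it is essentially surjective and fully faithful. Both verifications reduce, via Proposition \ref{prop-toric-vb-over-affine-equiv-trivial}, to the affine toric case, where every toric vector bundle is a direct sum $\bigoplus_j \L_{[u_j]}$ of twisted trivial line bundles.

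For essential surjectivity, given a compatible collection of filtrations on $E$, I would construct the bundle cone by cone. For each $\sigma \in \Sigma$, pick a splitting $E = \bigoplus_j L_j$ and characters $[u_j] \in M_\sigma$ realizing \eqref{equ-Klyachko-comp-condition} for every $\rho \in \sigma(1)$, and define $\E_\sigma := \bigoplus_j \L_{[u_j]}$ on the affine open $X_\sigma$. The fiber of $\E_\sigma$ at $x_0$ is canonically identified with $E$ via the chosen splitting. The main work is to glue these local bundles on overlaps $X_{\sigma} \cap X_{\tau} = X_{\sigma \cap \tau}$, which amounts to checking that any two splittings and character choices compatible with the filtrations along $\sigma \cap \tau$ produce isomorphic bundles on $X_{\sigma \cap \tau}$, and that the identifications satisfy the cocycle condition on triple intersections.

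For full faithfulness, given toric vector bundles $\E, \E'$ on $X_\Sigma$ with associated filtrations $\{E^\rho_i\}, \{{E'}^\rho_i\}$, any $T$-equivariant bundle map $\varphi: \E \to \E'$ is determined by its fiber $F = \varphi_{x_0}: E \to E'$; by functoriality of the construction $E^\rho_i \mapsto H^0(X_\rho,\E)_u \hookrightarrow E$, the map $F$ automatically respects the filtrations. Conversely, given a morphism of compatible filtrations $F: E \to E'$, I would locally express $\E_{|X_\sigma}$ and $\E'_{|X_\sigma}$ as sums of line bundles via Proposition \ref{prop-toric-vb-over-affine-equiv-trivial}; the hypothesis that $F$ respects all filtrations for $\rho \in \sigma(1)$ translates, under the equivalence between $T$-equivariant sheaves on $X_\sigma$ and $(\sigma^\vee \cap M)$-graded modules over $\k[\sigma^\vee \cap M]$, into a well-defined equivariant morphism $\varphi_\sigma: \E_{|X_\sigma} \to \E'_{|X_\sigma}$. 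Uniqueness of the extension, together with compatibility of restrictions to faces, glues these into a global $\varphi$.

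The main obstacle I anticipate is the well-definedness of the gluing in essential surjectivity: the splittings $E = \bigoplus L_j$ realizing a given compatible system are far from unique, so a substantial verification is needed to see that the resulting bundle on $X_{\sigma \cap \tau}$ depends only on the filtration data. The cleanest way to handle this is to replace the ad hoc gluing by a direct construction of the equivariant sheaf on each $X_\sigma$ from the filtration data — for $u \in M$, define $H^0(X_\sigma,\E)_u := \bigcap_{\rho \in \sigma(1)} E^\rho_{\langle u, v_\rho \rangle}$, and verify that the resulting $M$-graded $\k[\sigma^\vee \cap M]$-module is locally free of rank $r$ precisely because of the compatibility condition \eqref{equ-Klyachko-comp-condition}. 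This intersection formula makes the construction manifestly canonical and automatically compatible with restriction to faces, bypassing the choice of splitting entirely.
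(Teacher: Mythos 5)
The paper does not actually prove this statement---it is recalled from Klyachko with a citation to \cite[Theorem 2.2.1]{Klyachko}---so there is no in-paper proof to compare against; your sketch is correct, and in particular your final, canonical construction $H^0(X_\sigma,\E)_u := \bigcap_{\rho\in\sigma(1)} E^\rho_{\langle u, v_\rho\rangle}$ combined with local equivariant triviality (Proposition \ref{prop-toric-vb-over-affine-equiv-trivial}) is the standard argument, correctly replacing the splitting-dependent gluing whose well-definedness you rightly flag as the main obstacle. It is moreover essentially the same strategy the authors themselves deploy for the DVR analogue (Theorems \ref{th-main} and \ref{th-main3}), where the $M$-graded sections $\F_{\sigma,u}=\chi^u\Lambda_{\sigma,u}$ satisfy $\Lambda_{\sigma,u}=\bigcap_{\rho\in\sigma(1)}\Lambda_{\rho,u}$ and local equivariant triviality plays exactly the role you assign to it.
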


\subsection{Toric schemes over a discrete valuation ring}   \label{subsec-toric-schemes}
In this section we review classification of toric schemes over a discrete valuation ring. Our main references are \cite[Chapter IV, \S 3]{Kempf} and \cite[Chapter 3]{BPS}.

Let $\val: K \to \overline{\Z}$ be a discrete valuation on a field $K$, and let $\O$ denote the corresponding discrete valuation ring. We let $\varpi$ denote a uniformizer for $\O$, namely an ideal generator for $\mathfrak{m}$, and $\k = \O / \mathfrak{m}$ the residue field of $\O$. Also $s$ and $\eta$ denote the \emph{special} and \emph{generic} points in $\Spec(\O)$ corresponding to the maximal ideal $\mathfrak{m}$ and the prime ideal $\{0\}$ respectively. For a scheme $\X$ over $\Spec(\O)$ we let $\X_\eta = \X \times_{\Spec(\O)} \Spec(K)$ and $\X_s = \X \times_{\Spec(\O)} \Spec(\k)$ be the \emph{generic fiber} and \emph{special fiber} of $\X$ respectively. 

We let $N$ and $M$ be dual free abelian groups of rank $n$, $\tilde{N} = N \times \Z$ and $\tilde{M} = M \times \Z$. 
We also let $T$ denote the split torus over $\Spec(\O)$ with cocharacter lattice $N$. For $u \in M$ we denote the corresponding character of $T$ by $\chi^u$. The generic fiber $T_\eta$ is the split torus over $K$ isomorphic to $(K^\times)^n$. The coordinate ring
$K[T_\eta]$ is isomorphic to the Laurent polynomial algebra with coefficients in $K$ in $n$ indeterminates. 

A \textit{toric scheme} $\X$ is an irreducible normal scheme of finite type over $\Spec(\O)$ together with a $T$-action and a choice of an open embedding of $T_\eta$ in $\X$ such that the action of $T$ on $\X$ extends the translation action of $T_\eta$ on itself. 
We denote the image of the identity point in $T_\eta$ in $\X$ with $x_0$. Note that the generic fiber $\X_\eta$ is a usual toric variety for the torus $T_\eta$ (defined over $K$). On the other hand, the special fiber $\X_s$ is a union of toric varieties for the torus $T_s$ (defined over the residue field $\k$). We think of $\X$ as an ``infinitesimal degeneration'' of the toric variety $\X_\eta$ to $\X_s$, a (possibly non-reduced) union of toric varieties.

For a rational polyhedral cone $\sigma \subset N_\R \times \R_{\geq 0}$ we let $\Delta$ be the polyhedron obtained by intersecting $\sigma$ and the plane $N_\R \times \{1\}$ (it may be empty). Conversely, for a rational polyhedron $\Delta \subset N_\R \times \{1\}$, we let $\sigma \subset N_\R \times \R_{\geq 0}$ denote the cone over $\Delta$. Then $\sigma \leftrightarrow \Delta$ gives a one-to-one correspondence between rational polyhedra in $N_\R \times \{1\}$ and rational polyhedral cones in $N_\R \times \R_{\geq 0}$ that intersect $N_\R \times \{1\}$.

\begin{definition}  \label{def-sigma-Delta}
Let $\Sigma$ be a fan in $N_\R \times \R_{\geq 0}$. We denote by $\Sigma_1$ the polyhedral complex in $N_\R \times \{1\}$ obtained by intersecting all the cones in $\Sigma$ by $N_\R \times \{1\}$. Conversely, for a polyhedral complex $\Sigma_1$ in $N_\R \times \{1\}$ we let $\Sigma$ be the fan consisting of the cones over the polyhedra in $\Sigma$ as well as the faces of these cones. Also we denote by $\Sigma_0$ the fan consisting of cones in $\Sigma$ that lie in $N_\R \times \{0\}$. 
\end{definition}

We recall that the \emph{recession cone} $\recc(\Delta)$ of a polyhedron $\Delta \subset N_\R$ is defined as:
$$\recc(\Delta) = \{ x \in N_\R \mid x+\Delta \subset \Delta \}.$$ 

\begin{remark}   \label{rem-rec-fan}
Let $\Sigma_1$ be a complete strongly convex rational polyhedral complex in $N_\R \times \{1\}$ (for definitions see \cite[Section 2.1]{BPS}). Then the collection $\Sigma$ consisting of cones over the polyhedra in $\Sigma_1$ and their faces is indeed a fan in $N_\R \times \R_{\geq 0}$. In particular, $\Sigma_0$ consists of the recession cones of the polyhedra in $\Sigma_1$ and is a complete fan in $N_\R = N_\R \times \{0\}$. On the other hand, as is shown in \cite[Example 3.1]{BS}, this is not the case if $\Sigma_1$ is not a complete strongly convex rational polyhedral complex. 
\end{remark}

From now on, by a polyhedral complex we always mean a strongly convex polyhedral complex. 

For a strictly convex rational polyhedral cone $\sigma \subset N_\R \times \R_{\geq 0}$ let $R_{\sigma} \subset K[T_\eta]$ be the subring defined by:
$$R_{\sigma} = \O[\chi^u \varpi^k \mid (u, k) \in \sigma^\vee \cap \tilde{M}],$$
and let $\U_{\sigma} = \Spec(R_{\sigma})$. 
When $\sigma$ is the cone over a polyhedron $\Delta$ in $N_\R \times \{1\}$ we also denote $R_{\sigma}$ by $R_\Delta$. By gluing the $\U_{\sigma}$, $\sigma \in \Sigma$, one constructs a toric scheme $\X_{\Sigma}$. When the fan $\Sigma$ is obtained from a polyhedral complex $\Sigma_1$ in $N_\R \times \{1\}$ we also denote the toric scheme $\X_{\Sigma}$ by $\X_{\Sigma_1}$. 
 
The next theorem is well-known and gives a classification of toric schemes (see \cite[Chap IV, Sec. 3, p. 191]{Kempf} as well as \cite[Section 3.5]{BPS}).
\begin{theorem}
One has the following:
\begin{itemize}
\item[(a)] The map $\sigma \mapsto \U_{\sigma}$ gives rise to an equivalence of categories between strictly convex rational polyhedral cones in $N_\R \times \R_{\geq 0}$ and affine toric $T$-schemes over $\Spec(\O)$.
\item[(b)] The map $\Sigma \mapsto \X_{\Sigma}$ gives an equivalence of categories between fans in $N_\R \times \R_{\geq 0}$ and toric $T$-schemes over $\Spec(\O)$.
\item[(c)] There is a bijection between the orbits of $T_\eta$ in $\X_\eta$ and the cones in $\Sigma_0$, that is the cones in $\Sigma$ that lie in $N_\R \times \{0\}$. Moreover, there is bijection between the orbits of $T_s$ in $\X_s$ and the cones of $\Sigma$ that intersect $N_\R \times \{1\}$, or in other words, the polyhedra in the polyhedral complex $\Sigma_1$. 
\item[(d)] In particular, the irreducible components of the special fiber $\X_s$ are in bijection with the vertices of $\Sigma$. Let 
$\rho$ be a ray in $\Sigma$ and let $(\nu_\rho, \ell_\rho)$ be the primitive vector on the ray generated by $\rho$. Then $\ell_\rho$ is the order of vanishing of the uniformizer $\varpi$ along the irreducible component of $\X_s$ corresponding to $\rho$.
\end{itemize}
\end{theorem}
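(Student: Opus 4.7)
The plan is to mimic the classical proof of the fan $\leftrightarrow$ toric variety dictionary, with the systematic replacement $M \leadsto \tilde M = M \times \Z$ recording $\varpi$-adic orders and dually $N_\R \leadsto N_\R \times \R_{\geq 0}$. The guiding observation throughout is that $(0,1)\in\sigma^\vee$ whenever $\sigma\subseteq N_\R\times\R_{\geq 0}$, so $\varpi = \chi^0\varpi^1 \in S_\sigma$ and $S_\sigma$ is genuinely an $\O$-algebra rather than merely a $\k$-algebra.

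For (a) I would construct mutually inverse functors. In the forward direction, Gordan's lemma applied to $\sigma^\vee \cap \tilde M$ yields finite generation of $S_\sigma$; it is a domain as a subring of $K[T_\eta]$, normal because $\sigma^\vee \cap \tilde M$ is a saturated sub-semigroup of $\tilde M$ and $\O$ is normal, and the $M$-grading of $K[T_\eta]$ restricts to a $T$-action on $\X_\sigma$. Strong convexity of $\sigma$ forces $\sigma^\vee$ to span $\tilde M_\R$, so $T_\eta \hookrightarrow (\X_\sigma)_\eta$ is dense. In the inverse direction, given an affine toric $T$-scheme $\X = \Spec(A)$, I would decompose $A \subset K[T_\eta]$ into $T$-weight spaces $A = \bigoplus_{u \in M} A_u$; each $A_u \subseteq K \cdot \chi^u$ is an $\O$-submodule, and since $\O$ is a DVR and $A$ is finitely generated and normal, $A_u$ is either zero or of the form $\varpi^{n_u}\O \cdot \chi^u$ for some $n_u \in \Z$. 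The set $\{(u, k) \in \tilde M \mid A_u \neq 0,\ k \geq n_u\}$ is then a saturated, finitely generated sub-semigroup of $\tilde M$ containing $(0,1)$ and spanning $\tilde M_\R$, hence of the form $\sigma^\vee \cap \tilde M$ for a unique strongly convex rational polyhedral cone $\sigma$. Functoriality on both sides is routine. For (b), given a face $\tau \preceq \sigma$ I would pick $(u_0, k_0) \in \operatorname{relint}(\sigma^\vee) \cap \tau^\perp \cap \tilde M$ and verify $S_\tau = S_\sigma[(\chi^{u_0}\varpi^{k_0})^{-1}]$, realizing $\X_\tau$ as a principal affine open in $\X_\sigma$; gluing the charts $\{\X_\sigma\}_{\sigma \in \Sigma}$ along these principal opens is compatible on triple overlaps by the fan axioms, and separation is inherited from the affine pieces.

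For (c) and (d), tensoring with $K$ over $\O$ kills the $\varpi$-information: $S_\sigma \otimes_\O K = K[\sigma_0^\vee \cap M]$ with $\sigma_0 = \sigma \cap (N_\R \times \{0\})$, so $\X_\eta = X_{\Sigma_0}$ and the classical orbit-cone correspondence gives the first bijection. For the special fiber, reducing modulo $\varpi$ and performing the standard orbit analysis on a semigroup algebra shows that the $T_o$-orbits of $(\X_\sigma)_o$ are indexed by the faces of $\sigma$ whose relative interior meets $N_\R \times \R_{>0}$, i.e.\ by non-empty polyhedra of $\Sigma_1$. The irreducible components of $(\X_\Sigma)_o$ correspond to top-dimensional $T_o$-orbits and hence to minimal such faces, namely the rays of $\Sigma$ meeting $N_\R \times \{1\}$, equivalently the vertices of $\Sigma_1$. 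For (d), localizing at the generic point $\xi_\rho$ of the component corresponding to a vertex $\rho$ yields a DVR in which $\chi^u\varpi^k$ has order $\langle(u,k),(v_\rho,\ell_\rho)\rangle$; in particular $\operatorname{ord}_{\xi_\rho}(\varpi) = \langle(0,1),(v_\rho,\ell_\rho)\rangle = \ell_\rho$. The main obstacle I expect is the inverse direction in (a): translating finite generation and normality of $A$ into the statement that $u \mapsto n_u$ is the support function of a strongly convex rational polyhedral cone requires a careful application of polyhedral duality together with the Noetherian hypothesis on $A$.
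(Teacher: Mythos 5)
The paper does not actually prove this theorem: it is quoted from Kempf--Knudsen--Mumford--Saint-Donat \cite[Chap.~IV, \S 3]{Kempf} (see also \cite[\S 3.5]{BPS}), and your outline is essentially the argument of that reference --- the dictionary $M \leadsto \tilde{M}$, weight decomposition inside $K[T_\eta]$, saturation from normality, gluing along faces, and the order of vanishing computed by pairing with the primitive generator $(v_\rho,\ell_\rho)$ at the codimension-one point of the normal scheme $\X_\Sigma$. In particular your computation in (c) that $S_\sigma\otimes_\O K = K[\sigma_0^\vee\cap M]$, hence $\X_\eta = X_{\Sigma_0}$, and your DVR argument for (d) are correct as sketched (the special fiber may be non-reduced when $\ell_\rho>1$, but orbits only see the underlying reduced structure, so the orbit count in (c) is unaffected).

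Two concrete points need repair. First, in the inverse direction of (a): a nonzero $\O$-submodule $A_u\subseteq K\cdot\chi^u$ is either $\varpi^{n_u}\O\cdot\chi^u$ or \emph{all} of $K\cdot\chi^u$, and the second case genuinely occurs, precisely when $\sigma\subseteq N_\R\times\{0\}$ (e.g.\ $A=K[M]$, i.e.\ $T_\eta$ regarded as a toric scheme over $\O$, has $A_u=K\chi^u$ for every $u$; mixed cases occur for any ray in $N_\R\times\{0\}$). These cones are part of the statement --- they index the generic-fiber orbits in (c) --- so your bookkeeping via $n_u\in\Z$ breaks there; the fix is to take the semigroup to be $\{(u,k)\in\tilde{M}\mid \chi^u\varpi^k\in A\}$ directly, allowing no lower bound in the $k$-direction, with saturation from normality and rational polyhedrality from the Noetherian hypothesis as you indicate. (Also, the element cutting out a face $\tau$ of $\sigma$ should be taken in $\operatorname{relint}(\sigma^\vee\cap\tau^\perp)\cap\tilde{M}$; the set $\operatorname{relint}(\sigma^\vee)\cap\tau^\perp$ you wrote is empty once $\tau\neq\{0\}$.) Second, for (b) you only construct the functor $\Sigma\mapsto\X_\Sigma$: the asserted equivalence also requires essential surjectivity, i.e.\ that an arbitrary (non-affine) toric $T$-scheme over $\O$ is covered by $T$-invariant affine opens --- a Sumihiro-type statement over the DVR, which is where the cited reference does real work --- and separatedness is not ``inherited from the affine pieces''; one needs the usual separation criterion that for $\sigma,\sigma'\in\Sigma$ the ring $S_{\sigma\cap\sigma'}$ is generated by $S_\sigma$ and $S_{\sigma'}$, coming from $(\sigma\cap\sigma')^\vee=\sigma^\vee+(\sigma')^\vee$ when $\sigma\cap\sigma'$ is a common face.
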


\begin{example}  \label{ex-toric-scheme-P1}
Let $n=1$ and $K = \C((t))$ be the field of formal Laurent series in one indeterminate $t$.
Consider the polyhedral complex $\Sigma_1$ in $N_\R \times \{1\}$ consisting of the 
polyhedra $(-\infty, 0]$, $[0, 1]$, $[1, \infty)$ (Figure \ref{fig-toric-scheme-poly-P1}). The corresponding fan $\Sigma$ is depicted in Figure \ref{fig-toric-scheme-fan-P1}. Intuitively, the toric scheme $\X_{\Sigma}$ is the family over $\Spec(\O)$ whose generic fiber is a projective line $\mathbb{P}^1$ and the special fiber is a union of two copies of $\mathbb{P}^1$ glued at a fixed point (Figure \ref{fig-toric-scheme-P1}).
\begin{figure}[h]
    \centering
    \begin{tikzpicture}[>=stealth]
	\begin{axis}[xmin=-2.5, xmax=2.5, ymin=-0.5, ymax=2.3,
	axis lines*=middle, 
	xlabel=\empty,
	ylabel=\empty,
	xtick=\empty,
	xticklabels=\empty,
	ytick=\empty,
	yticklabels=\empty,]
	\addplot[name path=F,domain={-3:3}] {1.5};
	\plot[name path=I,domain={-3:3}, opacity=0] {2};
	\addplot[name path=G,domain={-0.5:2.3}] {\x};
	\addplot[name path=H,domain={-2:2}] {0} node [pos=0.97, below] {$\Sigma_0$};
	\addplot[pattern=north east lines, pattern color=blue]fill between[of=I and H, soft clip={domain=-2:0}];
	\addplot[pattern=horizontal lines, pattern color=red]fill between[of=I and G, soft clip={domain=0:2}];
	\addplot[pattern=vertical lines, pattern color=green]fill between[of=H and G, soft clip={domain=0:2}];
	\node at (axis cs:0,0) [circle, scale=0.4, draw=black!80,fill=black!80] {};
	\node at (axis cs:0,1.5) [circle, scale=0.4, draw=black!80,fill=black!80] {};
	\node at (axis cs:1.5,1.5) [circle, scale=0.4, draw=black!80,fill=black!80] {};
	\node at (axis cs:2.3,2) [] {$\widetilde{\Sigma}$};
	\end{axis}
	\end{tikzpicture}
        
    \caption{A fan $\Sigma$ defining a toric scheme with $\mathbb{P}^1$ as general fiber}
    \label{fig-toric-scheme-fan-P1}
\begin{tikzpicture}
    	\draw[-] (-3,0)--(3.2,0);
    	\draw (-3,1.5)--(3.5,1.5);
    	\draw[decorate,decoration={coil,segment length=4pt}] (-3,1.5)--(-0.2,1.5);
    	\draw[decorate,decoration={coil,segment length=4pt}] (0.2,1.5)--(1.3,1.5);
    	\draw[decorate,decoration={coil,segment length=4pt}] (1.7,1.5)--(3.5,1.5);
    	\draw[-] (0,-0.5)--(0,2.5);
	\node at  (0.4,2.5){$\R_{\geq 0}$};
	\node at  (0,1.5){$\bullet$};
	\node at  (1.5,1.5){$\bullet$};
	\node at  (3.5,0){$N_{\R}$};

		\end{tikzpicture}
\label{fig-toric-scheme-poly-P1}
\caption{The polyhedral complex $\Sigma$ associated to the fan $\Sigma$}

\vspace{.2cm}

\begin{tikzpicture}
\draw[thick](-1.7,1.3) circle (0.4);
\draw[thick] (-1.65,0.9) arc (-23:23:1);
\draw[thick, dashed] (-1.7,1.7) arc (157:203:1);
\draw[thick] (0.95,0.9) arc (-23:23:1);
\draw[thick, dashed] (0.95,1.7) arc (157:203:1);
\draw[thick] (1.75,0.9) arc (-23:23:1);
\draw[thick, dashed] (1.75,1.7) arc (157:203:1);
\draw[thick](0.9,1.3) circle (0.4);
\draw[thick](1.7,1.3) circle (0.4);
    \draw[-] (-2.5,0)--(2.5,0);
	\draw[dashed, ->] (-0.8,1.3)--(0.1,1.3);
    \draw[densely dotted, -] (-1.7,0.7)--(-1.7,0.2);
    \draw[densely dotted, -] (1.3,0.7)--(1.3,0.2);
    \node at  (-2.1,1.3){\tiny{$\bullet$}};
    \node at  (-1.3,1.3){\tiny{$\bullet$}};
    \node at  (0.5,1.3){\tiny{$\bullet$}};
    \node at  (1.3,1.3){\tiny{$\bullet$}};
    \node at  (2.1,1.3){\tiny{$\bullet$}};
	\node at  (-1.7,0){\tiny{$\bullet$}};
	\node at  (1.3,0){\tiny{$\bullet$}};
	\node at  (-1.7,-0.3){$\eta$};
	\node at  (1.3,-0.3){$s$};
	\node at  (-1.7,2.1){$\X_\eta$};
	\node at  (1.3,2.1){$\X_s$};
\end{tikzpicture}
\label{fig-toric-scheme-P1}
\caption{The general and special fibers of the toric scheme associated to the fan $\Sigma$}
\end{figure}
\end{example}

\begin{definition}   \label{def-tlb-over-S}
A \emph{toric line bundle} on $\X$ is a line bundle $\L$ over $\X$ together with a linear action of $T$ on $\L$.
We also define a \emph{framed} toric line bundle on $\X$ to be a toric line bundle together with a choice of a nonzero vector $e$ in the fiber $(\L_\eta)_{x_0}$. The choice of $e$ is equivalent to the choice of a $K$-linear isomorphism between $(\L_\eta)_{x_0}$ and $K$. 
\end{definition}

\begin{remark}    \label{rem-BPS}
Our notion of a framed toric line bundle is the same as the notion of a \emph{toric line bundle with a toric section} in \cite[Definition 3.6.4]{BPS}. We point out that we use the term \emph{toric line bundle} to mean a torus equivariant line bundle.
\end{remark}


We now recall the classification of toric line bundles on a toric scheme $\X_\Sigma$ in terms of piecewise linear and piecewise affine functions. This is similar to the classification of toric line bundles on a toric variety over a field. 
The following is essentially the same as \cite[Chap IV, Section 3, item (h)]{Kempf}.
\begin{theorem} \label{th-tlb-over-S}
There is a natural bijection between the set of framed toric line bundles $\mathcal{L}$ over $X_\Sigma$ and integral piecewise linear functions $\phi: \Sigma \to \R$. Moreover, the toric line bundle $\mathcal{L}_\eta$ over the generic fiber $\X_\eta$ corresponds to the piecewise linear function $\phi_0: |\Sigma_0| \to \R$, the restriction of $\phi$ to the fan $\Sigma_0$ (consisting of the cones in $\Sigma$ that lie in $N_\R \times \{0\}$). 
\end{theorem}

When $\Sigma$ is a complete fan in $N_\R \times \R_{\geq 0}$, the above can be stated in terms of piecewise affine functions on the polyhedral complex $\Sigma_1$.
Recall that a function $\phi_1: N_\R \times \{1\} \to \R$ is an \emph{affine function} if $\phi_1$ is a linear function   followed by a translation. That is, there is a linear function $\phi_0: N_\R \times \{0\} \to \R$ and $b \in \R$ such that $\phi_1(x, 1) = \phi_0(x, 0) + b$, $\forall x \in N_\R$. The linear function $\phi_0$ is called the \emph{linear part of $\phi_1$}. In another word, a function $\phi_1: N_\R \times \{1\} \to \R$ is affine if and only if it is the restriction of a \emph{linear function} $\phi: N_\R \times \R \to \R$ to $N_\R \times \{1\}$. We say that an affine function $\phi_1$ is \emph{integral} if $\phi_1(N \times \{1\}) \subset \Z$.

Let $\Sigma_1$ be a complete polyhedral complex in $N_\R \times \{1\}$. Recall that $|\Sigma_1|$ denotes the \emph{support} of $\Sigma_1$, the union of all the polyhedra in $\Sigma_1$. We say that a function $\phi_1:|\Sigma_1| \to \R$ is \emph{piecewise affine}, if for every polyhedron $\Delta \in \Sigma_1$, the function $\phi_{|\Delta}$ is the restriction of an affine function on $N_\R \times \{1\}$ to $\Delta$. We say that $\phi$ is an \emph{integral piecewise affine} function, if the maps $\phi_{|\Delta}$ are the restrictions of integral affine functions. The linear parts of the $\phi_{|\Delta}$, for all $\Delta \in \Sigma_1$, glue together to give a piecewise linear function $\phi_0: |\Sigma_1| \to \R$ which we call the \emph{piecewise linear part of} $\phi$. 

For a function $\phi: N_\R \times \R_{\geq 0} \to \R$ and $m \geq 0$ let $\phi_m: N_\R \times \{m\} \to \R$ denote the restriction of $\phi$ to $N_\R \times \{m\}$. Let $\Sigma$ be a complete fan in $N_\R \times \R_{\geq 0}$ with $\Sigma_1$ corresponding complete polyhedral complex in $N_\R \times \{1\}$. One verifies that $\phi \mapsto \phi_1$ gives a bijection between the collection of (integral) piecewise linear functions $\phi: |\Sigma| \to \R$ and (integral) piecewise affine functions $\phi_1: |\Sigma_1| \to \R$. Theorem \ref{th-tlb-over-S} can then be restated as follows:
\begin{corollary} \label{cor-tlb-over-S}
Let $\Sigma \subset N_\R \times \R_{\geq 0}$ be a complete fan. With notation as above, there is a natural bijection between the set of framed toric line bundles $\mathcal{L}$ over $X_\Sigma$ and integral piecewise affine functions $\phi_1: |\Sigma_1| \to \R$. Moreover, the toric line bundle $\mathcal{L}_\eta$ over the generic fiber $\X_\eta$ corresponds to $\phi_0: |\Sigma_0| \to \R$, the linear part of $\phi_1$. 
\end{corollary}

Our Theorems \ref{th-main} and \ref{th-main2} extend the above (Theorem \ref{th-tlb-over-S} and Corollary \ref{cor-tlb-over-S}) to toric vector bundles over $\Spec(\O)$.

\section{Preliminaries on additive norms and valuations}  \label{sec-add-norms}
In this section we review some basic facts about valuations and additive norms on a finite dimensional $K$-vector space $E$. These are related to the geometric realizations of the Tits and Bruhat-Tits buildings of the reductive algebraic group $\GL(E)$. These are the main gadgets in our main theorems on classifying toric vector bundles over toric schemes (Section \ref{sec-tvbs-dvr}). 

As usual let $E$ be an $r$-dimensional vector space over a field $K$ equipped with a discrete valuation $\val: K \to \overline{\Z}$. We will be interested in two kinds of ``valuations'' on $E$, first the ones that extend the valuation $\val$ on $K$ to $E$, and secondly the ones that extend the trivial valuation on $K$ to $E$. Following terminology from Bruhat-Tits theory for $\GL(E)$, we refer to the first kind of valuations as \emph{additive norms}. We will refer to the second kind simply as \emph{valautions} on $E$. More precisely we make the following definitions:

\begin{definition}[Additive norm] \label{def-add-norm}
We call a function $v: E \to \overline{\R} = \R \cup \{\infty\}$ an {\it additive norm} if the following hold:
\begin{itemize}
	\item[(1)] For all $e \in E$ and $\lambda \in K$ we have $v(\lambda e) = \val(\lambda) + v(e)$. 
	\item[(2)] For all $e_1, e_2 \in E$, the non-Archimedean inequality $v(e_1+e_2) \geq \min\{v(e_1), v(e_2)\}$ holds.
	\item[(3)] $v(e) = \infty$ if and only if $e=0$. 
\end{itemize}
An additive norm is \emph{integral} if it attains values in $\overline{\Z}$. 
\end{definition}

Note that if $v$ is an additive norm then for any $a \in \R$, $v_a$ defined by $v_a(e) = v(e) + a$ is also an additive norm. Two additive norms $v$, $v'$ are said to be \emph{equivalent} if their difference is a constant.

\begin{definition}[Valuation]  \label{def-val}
We call a function $v: E \to \overline{\R}$ a {\it valuation} if the following hold:
\begin{itemize}
	\item[(1)] For all $e \in E$ and $0 \neq \lambda \in K$ we have $v(\lambda e) = v(e)$. 
	\item[(2)] For all $e_1, e_2 \in E$, the non-Archimedean inequality $v(e_1+e_2) \geq \min\{v(e_1), v(e_2)\}$ holds.
	\item[(3)] $v(e) = \infty$ if and only if $e=0$. 
\end{itemize}
A valuation is \emph{integral} if it attains values in $\overline{\Z} = \Z \cup \{\infty\}$. 
\end{definition}
More generally, we define the notion of a level $m$ additive norm which extends both additive norms and valuations. 
\begin{definition}[Level $m$ additive norm]  \label{def-level-m-add-norm}
For $m \geq 0$, we call a function $v: E \to \overline{\R} = \R \cup \{\infty\}$ a {\it level $m$ additive norm} if the following hold:
\begin{itemize}
	\item[(1)] For all $e \in E$ and $0 \neq \lambda \in K$ we have $v(\lambda e) = m \val(\lambda) + v(e)$. 
	\item[(2)] For all $e_1, e_2 \in E$, the non-Archimedean inequality $v(e_1+e_2) \geq \min\{v(e_1), v(e_2)\}$ holds.
	\item[(3)] $v(e) = \infty$ if and only if $e=0$. 
\end{itemize}
Clearly, an additive norm is a level $1$ additive norm and a valuation is a level $0$ additive norm.
\end{definition}

\begin{definition}[Extended buildings]
Throughout we use the following notation and terminology:
\begin{itemize}
\item[(a)] We denote the set of all additive norms on $E$ by $\tilde{\B}_\aff(E)$ and call it the \emph{extended Bruhat-Tits building of $E$}. In the literature, also it is  sometimes called the \emph{Goldman-Iwahori space of $E$}. 
\item[(b)] We denote the set of all valuations on $E$ by $\tilde{\B}_\sph(E)$ and call it the \emph{extended Tits building of $E$} (or the \emph{cone over the Tits building of $E$}).
\item[(c)] For $m \geq 0$, we denote the set of all level $m$ additive norms by $\tilde{\B}_m(E)$. Clearly $\tilde{\B}_1(E) = \tilde{\B}_\aff(E)$ and $\tilde{\B}_0(E) = \tilde{\B}_\sph(E)$. We let $$\tilde{\B}(E) = \bigsqcup_{m \geq 0} \tilde{\B}_m(E),$$
and call it the \emph{total Bruhat-Tits building of $E$} (or the \emph{total building of $E$ for short}).
\end{itemize}
\end{definition}

\begin{remark}
We note that $\tilde{\B}(E)$ comes with a natural action of the multiplicative semigroup $\R_{\geq 0}$: if $v: E \to \overline{\R}$ is a level $m$ additive norm and $\lambda \in \R$ then one verifies that $\lambda v$ is a level $m \lambda$ additive norm. 
\end{remark}

A \emph{frame} is a direct sum decomposition of $E = \sum_{i=1}^r L_i$ into one-dimensional subspaces $L_i$. In other words, frames correspond to vector space bases up to multiplication of basis elements by nonzero scalars. 

We say that a level $m$ additive norm $v: E \to \overline{\R}$ is \emph{adapted} to a frame $L=\{L_1, \ldots, L_r\}$ if the following holds. For any $e = \sum_i e_i$, $e_i \in L_i$, we have: 
$$v(e) =  \min\{ v(e_i) \mid i=1, \ldots, r\}.$$
In other words, if $B = \{b_1, \ldots, b_r\}$ is a basis with $b_i \in L_i$, then for any $e = \sum_i \lambda_i b_i$ we have: $$v(e) = \min\{ m\val(\lambda_i) + v(b_i) \mid i=1, \ldots, r\}.$$ That is, the additive norm $v$ is determined by its values on the basis $B$. 

\begin{definition}[Extended apartment] \label{def-ext-apt}
Let $L = \{L_1, \ldots, L_r\}$ be a frame for $E$. We denote the collection of all level $m$ additive norms that are adapted to $L$ by $\tilde{A}_m(L)$ and call it the extended level $m$ apartment of $L$. We put $\tilde{A}(L) = \bigsqcup_{m \geq 0} \tilde{A}_m(L)$. We also set 
$\tilde{A}_\sph(L) = \tilde{A}_0(L)$ and $\tilde{A}_\aff(L) = \tilde{A}_1(L)$.
If $B$ is a basis for $E$ spanning a frame $L$, we also denote the extended apartments corresponding to $L$ by $\tilde{A}(B)$, $\tilde{A}_\sph(B)$ and $\tilde{A}_\aff(B)$, respectively.
\end{definition}

\subsubsection{Additive norms, lattices and affine Grassmannian} \label{subsubsec-lattices}
An \emph{$\O$-lattice} (or \emph{lattice} for short) $\Lambda$ in $E$ is a rank $r$ free $\O$-submodule in $E$. 
One sees that there is a one-to-one correspondence between the $\O$-lattices in $E$ and the integral additive norms on $E$ as follows: To an integral additive norm $v$ one assigns the lattice $\Lambda_v$ given by:
\begin{equation}   \label{equ-Lambda-v}
\Lambda_v = \{ e \in E \mid v(e) \geq 0\}.
\end{equation}
Conversely, a lattice $\Lambda$ gives rise to an additive norm $v_\Lambda: E \to \overline{\Z}$ by:
$$v_\Lambda(e) = \max\{ k \in \Z \mid \varpi^{-k} e \in \Lambda \}.$$
One verifies that the maps
$v \mapsto \Lambda_v$ and $\Lambda \mapsto v_\Lambda$ are inverses of each other and hence provide a one-to-one correspondence between integral additive norms and lattices.  

Fix a basis for $E$ to identify it with $K^r$. The group $\GL(r, K)$ acts transitively on the set of all $\O$-lattices and the stabilizer of the standard lattice $\O^r$ is $\GL(r, \O)$. Thus, the collection of all $\O$-lattices in $K^r$ can be identified with $\GL(r, K) / \GL(r, \O)$. It is usually referred to as the \emph{affine Grassmannian} of the group $\GL(r, K)$. 

\subsubsection{Valuations and labeled flags} \label{subsubsec-flags}
A labeled flag $(F_\bullet, c_\bullet)$ is a flag of subspaces in $E$:
$$ F_\bullet = (\{0\} = F_0 \subsetneqq F_1 \subsetneqq \cdots \subsetneqq F_k = E),$$
and a sequence of decreasing numbers $c_\bullet = (c_1 > \cdots > c_k)$. 
Each valuation $v: E \to \overline{R}$ naturally gives rise to a labeled flag as follows: let $v(E) = \{\infty=c_0 > c_1 > \cdots > c_k\}$ be the value set of $v$. For each $i=1, \ldots, k$, put $$F_i = E_{\geq c_i} = \{e \in E \mid v(e) \geq c_i\}.$$
Conversely, given a labeled flag $(F_\bullet, c_\bullet)$ we define a valuation $v$ by $v(e) = \max\{ c_i \mid e \in F_i\}$, $\forall e \in E$. One verifies that the above gives a one-to-one correspondence between valuations and labeled flags. 

\subsubsection{Floor, ceiling and pull-back of an additive norm}  \label{subsubsec-ceiling}
Next we discuss some basic operations on additive norms, namely the floor and ceiling functions and pull-back, and prove some facts about them for future use in Section \ref{subsec-main}. 

One has a natural partial order $\preceq$ on the set $\tilde{\B}(E)$ of all additive norms. 
Let $v$, $v'$ be additive norms on $E$. We say that $v \preceq v'$ if $v(e) \leq v'(e)$, for all $e \in E$.

Recall that for $r \in \R$, the floor $\lfloor r \rfloor$ (respectively the ceiling $\lceil r \rceil$) is the largest integer less than or equal to $r$ (respectively the smallest integer greater than or equal to $r$).
\begin{definition}[Floor and ceiling of an additive norm]   \label{def-ceiling}
For an additive norm $v: E \to \overline{\R}$ we define its \emph{floor} $\lfloor v \rfloor$ and  \emph{ceiling} $\lceil v \rceil: E \to \overline{\Z}$ to be functions from $E$ to $\overline{\Z}$ given by:
$$\lfloor v \rfloor(e) = \lfloor v(e) \rfloor,$$
$$\lceil v \rceil(e) = \lceil v(e) \rceil, \quad \forall e \in E.$$
\end{definition}

\begin{proposition}
$\lfloor v \rfloor$ and $\lceil v \rceil$ are additive norms.	
\end{proposition}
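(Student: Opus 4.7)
The plan is to verify the three defining axioms of an additive norm (Definition \ref{def-add-norm}) separately for $\lfloor v \rfloor$ and $\lceil v \rceil$, and in each case the argument reduces to two elementary facts: the valuation $\val: K \to \overline{\Z}$ is integer-valued (because $K$ is discretely valued), and both $\lfloor \cdot \rfloor$ and $\lceil \cdot \rceil$ are non-decreasing functions $\overline{\R} \to \overline{\Z}$ that satisfy $\lfloor m + r \rfloor = m + \lfloor r \rfloor$ and $\lceil m + r \rceil = m + \lceil r \rceil$ whenever $m \in \Z$.

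First I would check axiom (1). Given $\lambda \in K$ and $e \in E$, we have $v(\lambda e) = \val(\lambda) + v(e)$. Since $\val(\lambda) \in \overline{\Z}$, pulling this integer out of the floor (resp.\ ceiling) gives
\[
\lfloor v(\lambda e) \rfloor = \val(\lambda) + \lfloor v(e) \rfloor, \qquad \lceil v(\lambda e) \rceil = \val(\lambda) + \lceil v(e) \rceil,
\]
where the cases $\lambda = 0$ or $e = 0$ are handled by the convention that $\infty$ absorbs any integer. Next, for axiom (2), monotonicity of floor and ceiling, combined with the fact that $\min$ commutes with any non-decreasing map from $\overline{\R}$ to $\overline{\Z}$, gives
\[
\lfloor v(e_1+e_2) \rfloor \geq \lfloor \min\{v(e_1), v(e_2)\} \rfloor = \min\{\lfloor v(e_1) \rfloor, \lfloor v(e_2) \rfloor\},
\]
and similarly with $\lceil \cdot \rceil$ in place of $\lfloor \cdot \rfloor$. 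Finally, axiom (3) is immediate because $\lfloor r \rfloor$ and $\lceil r \rceil$ equal $\infty$ exactly when $r = \infty$, so $\lfloor v \rfloor(e) = \infty \iff v(e) = \infty \iff e = 0$, and likewise for $\lceil v \rceil$.

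There is no substantive obstacle here: the statement is a routine verification whose only content is that the value group of $\val$ lies in $\Z$ so that the shift-equivariance of floor and ceiling kicks in. If anything, the only point requiring a moment's care is handling the $\infty$ values (both in axiom (1) when $\lambda = 0$ and in axiom (2) when one of the $e_i$ is zero), which is why I would spell out the convention that $m + \infty = \infty$ and $\lfloor \infty \rfloor = \lceil \infty \rceil = \infty$ at the start.
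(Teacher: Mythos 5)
Your proof is correct, but it takes a different route from the paper's. You verify the three axioms of Definition \ref{def-add-norm} directly, using only that $\val$ is integer-valued together with the shift-equivariance $\lceil m+r \rceil = m + \lceil r \rceil$ for $m \in \Z$ and the fact that the (non-decreasing) floor and ceiling maps commute with $\min$; each step checks out, including the treatment of $\infty$. The paper instead argues structurally: it invokes the fact that $v$ is adapted to some basis $B = \{b_1,\ldots,b_r\}$, writes $v\left(\sum_i \lambda_i b_i\right) = \min\{\val(\lambda_i)+v(b_i)\}$, pulls the ceiling inside the minimum using integrality of $\val(\lambda_i)$, and observes that the resulting expression $\min\{\val(\lambda_i)+\lceil v(b_i)\rceil\}$ is again an additive norm. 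The trade-off is this: your argument is more elementary and self-contained, since it does not use the (nontrivial, and unproved in the paper) diagonalizability statement that every additive norm is adapted to some basis; the paper's argument, on the other hand, yields the additional information that $\lceil v \rceil$ and $\lfloor v \rfloor$ are adapted to the \emph{same} basis as $v$, i.e.\ they lie in the same extended apartment, which is the form in which the construction is actually used later (e.g.\ in the proofs in Section \ref{subsec-main}, where ceilings of apartment-valued affine maps appear). Either proof suffices for the proposition as stated.
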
  

\begin{proof}
We write the proof for the ceiling, the proof for the floor is similar. We know that there exists a basis $B = \{b_1, \ldots, b_r\}$ such that $v$ is adapted to $B$. Thus:
$$v\left(\sum_i \lambda_i b_i \right) = \min\{ \val(\lambda_i)+v(b_i) \mid i=1, \ldots, r\}.$$
Now
\begin{align*}
\lceil v \rceil \left(\sum_i \lambda_i b_i\right) &= \lceil v(\sum_i \lambda_i b_i) \rceil \\
&= \lceil \min\{ \val(\lambda_i)+v(b_i) \mid i=1, \ldots, r\} \rceil \\ &= \min\{ \val(\lambda_i)+\lceil v(b_i) \rceil \mid i=1, \ldots, r\}.\\
\end{align*}
But any expression $\min\{ \val(\lambda_i)+a_i \mid i=1, \ldots, r\}$, $a_i \in \R$, defines an additive norm. This proves that $\lceil v \rceil$ is indeed an additive norm. 
\end{proof}

The floor (respectively ceiling) of an additive norm $v$ is the greatest integral additive norm that is smaller than or equal to $v$ (respectively smallest integral additive norm that is greater than or equal to $v$). 

\begin{definition}[Pull-back of an additive norm] \label{def-pull-back}
Let $F: E \to E'$ be a linear map between $K$-vector spaces $E$ and $E'$. Given an additive norm $v': E \to \overline{\R}$ one can consider its pull-back $F^*(v')$ given by $$F^*(v')(e) = v'(F(e)), \quad \forall e \in E.$$
\end{definition}
We note that $F^*(v')$ may not be an additive norm on $E$ for the following reason: $F$ may have nonzero kernel in which case $F^*(v')$ assigns value $\infty$ to nonzero elements in the kernel. We will need the following in Section \ref{subsec-main}.

\begin{proposition}  \label{prop-F-v-v'}
Let $F: E \to E'$ be a linear map between $K$-vector spaces $E$ and $E'$. Let $v$ and $v'$ be integral additive norms on $E$ and $E'$ respectively. Then, for all $e \in E$,  $$v(e) \leq F^*(v')(e)$$ if and only if $$F(\Lambda_v) \subset \Lambda_{v'}.$$ Recall that $\Lambda_v$, $\Lambda_{v'}$ denote the lattices corresponding to the integral additive norms $v$, $v'$ respectively (see \eqref{equ-Lambda-v}).   \end{proposition}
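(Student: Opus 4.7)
The plan is a straightforward unwinding of the definitions in both directions, with the only subtlety being that integrality of $v$ lets us pass between pointwise inequalities and the single containment $F(\Lambda_v)\subset\Lambda_{v'}$.

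For the forward direction ($\Rightarrow$), I would assume $v(e)\leq v'(F(e))$ for all $e\in E$. Taking any $e\in\Lambda_v$, by definition $v(e)\geq 0$, and hence $v'(F(e))\geq v(e)\geq 0$, which means $F(e)\in\Lambda_{v'}$. This direction does not use integrality of the norms; it works verbatim with $\R$-valued norms and the corresponding unit balls.

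For the reverse direction ($\Leftarrow$), I would assume $F(\Lambda_v)\subset\Lambda_{v'}$ and fix an arbitrary $e\in E$. If $e=0$ both sides equal $\infty$, so assume $e\neq 0$, and set $k=v(e)$, which is an integer because $v$ is integral. Then $\varpi^{-k}e$ satisfies $v(\varpi^{-k}e)=-k+v(e)=0$, so $\varpi^{-k}e\in\Lambda_v$. By hypothesis $\varpi^{-k}F(e)=F(\varpi^{-k}e)\in\Lambda_{v'}$, giving $v'(\varpi^{-k}F(e))\geq 0$, i.e. $v'(F(e))\geq k=v(e)$. (In the case $F(e)=0$ the inequality is automatic since $v'(0)=\infty$.)

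This reverse direction is the step where the integrality hypothesis is used essentially: scaling by $\varpi^{-v(e)}$ normalizes an arbitrary vector to lie on the boundary of $\Lambda_v$, and there is no real obstacle beyond noticing this trick. No deeper building-theoretic input is required.
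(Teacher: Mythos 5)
Your proposal is correct and follows essentially the same route as the paper: the forward direction is the same unit-ball observation (the paper phrases it as a contradiction, you argue directly), and your reverse direction's normalization $\varpi^{-v(e)}e\in\Lambda_v$ is just a repackaging of the paper's use of the formula $v(e)=\max\{k\mid \varpi^{-k}e\in\Lambda_v\}$. No gaps; nothing further is needed.
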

\begin{proof}
Suppose $v(e) \leq F^*(v')(e)$, for all $e \in E$, but $F(\Lambda_v) \not\subset \Lambda_{v'}$. Then there exists $e \in \Lambda_v$ with $F(e) \notin \Lambda_{v'}$. Since $e \in \Lambda_v$ we have $v(e) \geq 0$ and since $F(e) \notin \Lambda_{v'}$ we have $v'(F(e)) < 0$, the contradiction proves the claim. Conversely, suppose $F(\Lambda_v) \subset \Lambda_{v'}$ then for any $e \in E$ we have:
\begin{align*}
v(e) &= \max\{ k \mid \varpi^{-k} e \in \Lambda_v\}, \\
& \leq \max\{ k \mid \varpi^{-k} F(e) \in F(\Lambda_v) \}, \\
&\leq \max\{ k \mid \varpi^{-k} F(e) \in \Lambda_{v'} \},\\
&= v'(F(e)).\\
\end{align*}
\end{proof}


\section{Tits and Bruhat-Tits buildings of $\GL(E)$}
\label{sec-building}

A \emph{building} is an (abstract) simplicial complex together with a collection of distinguished subcomplexes called \emph{apartments} that satisfy certain axioms (see \cite[Definition 4.1]{AB}). One can think of the notion of building as a combinatorial version of the notion of symmetric space from Lie theory and differential geometry. 

There are two important kinds of buildings: \emph{spherical buildings} and \emph{affine buildings}. 
\begin{itemize}
\item In a spherical building, every apartment is a triangulation of a sphere. To an algebraic group $G$ over a field there corresponds a spherical building which is usually referred to as 
the \emph{Tits building} of $G$. As a simplicial complex, it encodes the relative position of \emph{parabolic subgroups} of $G$.
\item In an affine building every apartment is a triangulation of an affine (Euclidean) space. To an algebraic group $G$ over a discretely valued field there corresponds an affine building which is usually referred to as the \emph{Bruhat-Tits building} of $G$. As a simplicial complex, it encodes the relative position of \emph{parahoric subgroups} of $G$.
\end{itemize}

As mentioned before, in this paper, we are only concerned with the Bruhat-Tits building of a general linear group $\GL(r, K)$ and we do not discuss the technical construction of the Bruhat-Tits building of an arbitrary linear algebraic group. For the general construction we refer the reader to \cite{Bruhat-Tits1} or or \cite[Section 3]{RTW}. 
For the general theory of buildings we refer the interested reader to the monographs \cite{AB} and \cite{Garrett}.

\subsection{The Bruhat-Tits building of $\GL(r, K)$}     \label{subsec-BT-bldg}
In this section we review the descriptions of the Bruhat-Tits building of the general group in terms of lattices and additive norms. The reader can also consult \cite[Section 6.9]{AB} and \cite[Section 1.2.1]{RTW}.


Let $E \cong K^r$ be an $r$-dimensional vector space over a discretely valued field $K$. We first describe the Bruhat-Tits building of $\GL(E)$ as an abstract simplicial complex which we denote by $\Delta_\aff(E)$. This description is in terms of of \emph{lattices} in $E$. Then we describe the geometric realization of the simplicial complex $\Delta_\aff(E)$ as a topological space which we denote by $\B_\aff(E)$. This geometric realization is in terms of \emph{additive norms} on $E$. Every simplex in $\Delta_\aff(E)$ corresponds to a subset of $\B_\aff(E)$ homeomorphic to a standard simplex. By abuse of terminology we refer to both $\Delta_\aff(E)$ and $\B_\aff(E)$ as the Bruhat-Tits building of $\GL(E)$.  

Recall that a lattice $\Lambda$ in $E$ is a rank $r$ free $\O$-submodule in $E$, and two lattices $\Lambda_1$, $\Lambda_2$ are said to be equivalent if for some nonzero $c \in K^\times$ we have $c\Lambda_1 = \Lambda_2$. We denote the equivalence class of a lattice $\Lambda$ by $[\Lambda]$.

The vertices in the Bruhat-Tits building $\Delta_\aff(E)$ correspond to the equivalence classes of lattices in $E$. Two vertices $[\Lambda_1]$ and $[\Lambda_2]$ are connected if for some representatives $\Lambda_1$, $\Lambda_2$ we have $\varpi \Lambda_1 \subsetneqq \Lambda_2 \subsetneqq \Lambda_1$. A $(k-1)$-simplex in $\Delta_\aff(E)$ corresponds to $k$ vertices $[\Lambda_1], \ldots, [\Lambda_{k}]$ such that:
\begin{equation}  \label{equ-periodic-lattice-chain}
	\varpi \Lambda_k \subsetneqq \Lambda_1 \subsetneqq \cdots \subsetneqq \Lambda_k.
\end{equation}
We call a chain of lattices as in \eqref{equ-periodic-lattice-chain} a \emph{periodic lattice chain}.

A \emph{full periodic lattice} chain is a chain of lattices 

\begin{equation}
	\varpi\Lambda_r\subsetneqq \Lambda_1\subsetneqq \hdots \subsetneqq \Lambda_r.
\end{equation}
Then each subsequent quotient $\Lambda_{i+1}/\Lambda_{i}$ is a locally free $\mathcal{O}$-module of rank $1$. The full periodic lattice chains correspond to simplices of maximum dimension (also called \emph{chambers}).

Let $\{b_1, \ldots, b_r\}$ be a $K$-vector space basis for $E$ and let $\Lambda_i\subset E$ be the free 
$\mathcal{O}$-module (lattice) generated by 
$\{b_1,\ldots, b_i,\varpi b_{i+1}, \ldots,\varpi b_r\}$. The chain of lattices 
\begin{equation}   \label{equ-full-lattice-chain}
\bigoplus_{i=1}^r \varpi\,\O\, b_i = \varpi \Lambda_r \subsetneqq\Lambda_{1}\subsetneqq \hdots \subsetneqq \Lambda_r= \bigoplus_{i=1}^r \O\, b_i
\end{equation}
is a full periodic lattice chain. Moreover, every full periodic lattice chain has the above form \eqref{equ-full-lattice-chain} for some choice of a basis $B$. 

Also recall that a frame $L=\{L_1, \ldots, L_r\}$ is a direct sum decomposition $E = \bigoplus_{i=1}^r L_i$ of $E$ into $1$-dimensional subspaces $L_i$. 
To a frame $L$ there corresponds a subcomplex of $\Delta_\aff(E)$ called the \emph{apartment} associated to $L$ as follows. Let $B=\{b_1, \ldots, b_r\}$ be a basis of $E$ with $b_i \in L_i$.
The vertices in the apartment are equivalence classes $[\Lambda]$ where $\Lambda$ is of the form
\begin{equation} \label{equ-affine-apt}
	\Lambda = \bigoplus_{i=1}^r \O \varpi^{a_i} b_i, 
\end{equation}
for any choice of $a_i \in \Z$. Note that a different choice of a basis $B$ in the same frame $L$ gives rise to the same collection of vertices but possibly with different coordinates $(a_1, \ldots, a_r)$. More precisely, let $\{b'_1, \ldots, b'_r\}$ be another basis with $b'_i = \lambda_i b_i$, $0 \neq \lambda_i \in K$. Then $\Lambda$ from \eqref{equ-affine-apt} can be written as $$\Lambda = \bigoplus_{i=1}^r \O \varpi^{a_i-\val(\lambda_i)} b'_i.$$
Clearly, if $(a_1, \ldots, a_r)$ runs over all the points in $\Z^r$ then $(a_1-\val(\lambda_1), \ldots, a_r-\val(\lambda_r))$ also runs over all the points in $\Z^r$. 
The set of vertices in the apartment is a copy of $\Z^r$ but without a specified point as the origin. A choice of a basis $B$, spanning the frame $L$, distinguishes a point in the apartment as the origin. 

\begin{remark}[Iwahori and parahoric subgroups] \label{rem-Iwahori-subgp} 
The $\GL(E)$-stabilizer of a full periodic lattice chain  is called an \emph{Iwahori subgroup}. Let us fix a basis $B$ and identify $E$ with $K^r$. Then the Iwahori subgroup that is the stabilizer of a full lattice chain as in \eqref{equ-full-lattice-chain} is equal to the preimage of a Borel subgroup $B \subset \GL(r, \k)$ under the evaluation map $p: \GL(r, \mathcal{O})\rightarrow \GL(r, \k)$ sending the unifromizer $\varpi$ to $0$. The stabilizer of a periodic lattice chain is called a \emph{parahoric subgroup}. It is the preimage, under $p$, of a parabolic subgroup in $\GL(r, \Bbbk)$.
Each frame $L$ in $E$ determines a split maximal torus $H_L \subset \GL(E)$. The simplices in the apartment of $L$ correspond to the parahoric subgroups in $\GL(E)$ that contain the maximal torus $H_L$. 
\end{remark}

Next we talk about the geometric realization of the abstract simplicial complex $\Delta_\aff(E)$. By the \emph{geometric realization of $\Delta_\aff(E)$} we mean a topological space $\B(E)$ together with a triangulation (that is, a decomposition into subsets homeomorphic to standard simplices) such that the simplices in the triangulation give the same abstract simplicial complex as $\Delta_\aff(E)$.

We say that two additive norms $v_1$, $v_2$ on $E$ are \emph{equivalent} if their difference is a constant. 

\begin{definition}[Geometric realization of Bruhat-Tits building]
We define $\B_\aff(E)$, the \emph{(geometric realization of) the Bruhat-Tits building} of $\GL(E)$ to be the set of equivalence classes of additive norms on $E$. For a frame $L$, the \emph{(geometric realization of) the apartment} associated to $L$, denoted by $A_\aff(L)$, is the set of equivalence classes of all additive norms adapted to $L$.
Let $B = \{b_1, \ldots, b_r\}$ be a basis spanning the frame $L$. Each additive norm $v$ adapted to $L$ is uniquely determined by the values $\{v(b_1), \ldots, v(b_r)\}$. It follows that $A_\aff(L)$ can be identified with the $(r-1)$-dimensional affine space $\R^r / \R \cong \R^{r-1}$ (since we are taking equivalence classes of additive norms). 
\end{definition}

The space $\B_\aff(E)$ has a natural topology and a triangulation. For a frame $E$, a choice of a basis $B$ spanning the frame $L$ gives an identification of the apartment $A_\aff(L)$ with the affine space $\R^{r-1}$.

\begin{figure}
	\begin{tikzpicture}
    	\draw[-] (-3,0)--(2,0);
	\node at  (-2,0){$\bullet$};
	\node at  (-1,0){$\bullet$};
	\node at  (0,0){$\bullet$};
	\node at  (1,0){$\bullet$};

		\end{tikzpicture}
		\vspace{.5cm}
	\caption{An apartment in the Bruhat-Tits/affine building of $\GL(2)$, a subdivision of the affine line into identical segments. An extended apartment is the Cartesian product of this picture with a copy of $\R$}
	
	\vspace{.5cm}
	\begin{tikzpicture}[scale=0.5]
	\begin{axis}[xmin=-0.9, xmax=0.9, ymin=-0.7, ymax=0.7,
	axis lines*=middle, axis line style={draw=none},
	xlabel=\empty,
	ylabel=\empty,
	xtick=\empty,
	xticklabels=\empty,
	ytick=\empty,
	yticklabels=\empty]
	\addplot[domain={-1.2:1.2}] {2*\x};
	\addplot[domain={-2.2:0.2}] {2*\x+1};
	\addplot[domain={-0.2:2.2}] {2*\x-1};
	\addplot[domain={-2.2:0.2}] {2*\x+2};
	\addplot[domain={-0.2:3.2}] {2*\x-2};
	\addplot[domain={-1.2:1.2}] {-2*\x};
	\addplot[domain={-2.2:0.2}] {-2*\x-1};
	\addplot[domain={-0.2:2.2}] {-2*\x+1};
	\addplot[domain={-2.2:0.2}] {-2*\x-2};
	\addplot[domain={-0.2:3.2}] {-2*\x+2};
	\addplot[domain={-3:3}] {0.5};
	\addplot[domain={-3:3}] {0};
	\addplot[domain={-3:3}] {-0.5};
	\node at (axis cs:0,0) [circle, scale=0.5, draw=black!80,fill=black!80] {};
	\node at (axis cs:0.5,0) [circle, scale=0.5, draw=black!80,fill=black!80] {};
	\node at (axis cs:-0.5,0) [circle, scale=0.5, draw=black!80,fill=black!80] {};
	\node at (axis cs:0.25,0.5) [circle, scale=0.5, draw=black!80,fill=black!80] {};
	\node at (axis cs:-0.25,0.5) [circle, scale=0.5, draw=black!80,fill=black!80] {};
	\node at (axis cs:0.75,0.5) [circle, scale=0.5, draw=black!80,fill=black!80] {};
	\node at (axis cs:-0.75,0.5) [circle, scale=0.5, draw=black!80,fill=black!80] {};
	\node at (axis cs:0.25,-0.5) [circle, scale=0.5, draw=black!80,fill=black!80] {};
	\node at (axis cs:-0.25,-0.5) [circle, scale=0.5, draw=black!80,fill=black!80] {};
	\node at (axis cs:0.75,-0.5) [circle, scale=0.5, draw=black!80,fill=black!80] {};
	\node at (axis cs:-0.75,-0.5) [circle, scale=0.5, draw=black!80,fill=black!80] {};
	\end{axis}
	\end{tikzpicture}

	\caption{An apartment in the Bruhat-Tits/affine building of $\GL(3)$, a triangulation of the affine plane. An extended apartment is the Cartesian product of this picture with a copy of $\R$}
\end{figure}


\subsubsection{Bruhat-Tits building of $\GL(2, K)$} \label{subsubsec-GL2}
Let $B = \{b_1, b_2\}$ be a basis for $E = K^2$. Consider the lattice $$\Lambda = \O b_1 \oplus \O b_2.$$ The equivalence class $[\Lambda]$ is comprised of the lattices $\O \varpi^{a} b_1 \oplus \O \varpi^{a} b_2$, $a \in \Z$. 
The vertices in the apartment $A_\aff(B)$ consist of all the equivalence classes of lattices of the form:
$$\Lambda_{a_1, a_2} = \O \varpi^{a_1} b_1 \oplus \O \varpi^{a_2} b_2.$$ Thus the set of vertices in $A_\aff(B)$ is $\{[\Lambda_{a, 0}] \mid a \in \Z\}$ and $A_\aff(B)$ is a $1$-dimensional affine space. 
	
A vertex $[\Lambda']$ is connected to the vertex $[\Lambda]$ if for some representative $\Lambda'$ we have:
	$$\varpi \Lambda \subsetneqq \Lambda' \subsetneqq \Lambda.$$
	Let $\overline{\Lambda'}$ be the image of $\Lambda'$ in the $\k$-vector space $\Lambda/\varpi \Lambda \cong \k^2$. 
One sees that $\Lambda' \mapsto \overline{\Lambda'}$ gives a one-to-one correspondence between the lattices $\Lambda'$, $\varpi \Lambda \subsetneqq \Lambda' \subsetneqq \Lambda$, and the $\k$-vector subspaces $L$, $\{0\} \subsetneqq L \subsetneqq \Lambda/\varpi \Lambda$. It follows that the set of vertices $[\Lambda']$ connected to $[\Lambda]$ is in one-to-one correspondence with the points on the projective line $\mathbb{P}(\Lambda/\varpi \Lambda) \cong \mathbb{P}^1$. In particular, if the residue field $\k$ is a finite field with $q$ elements, then every vertex is connected to $q+1$ other vertices. It can also be shown that the graph obtained by connecting the vertices as above is a tree, that is, there are no cycles. Moreover, if the base field $K$ is complete then 
	any one-sided infinite path lies in an apartment
	(see \cite[Chap. II, \S1.1]{Serre} and \cite[Section I.2 ]{Casselman-SL2}). An example is $K = \Q_p$, the field of $p$-adic numbers (Figure \ref{fig-Tree-Q2-Q3}). 
	
	\begin{figure}
		\centering

\begin{tikzpicture}[
  grow cyclic,
  level distance=1cm,
  level/.style={
    level distance/.expanded=\ifnum#1>1 \tikzleveldistance/1.5\else\tikzleveldistance\fi,
    nodes/.expanded={\ifodd#1 fill\else fill\fi}
  },
  level 1/.style={sibling angle=120},
  level 2/.style={sibling angle=90},
  level 3/.style={sibling angle=90},
  level 4/.style={sibling angle=70},
  nodes={circle,draw,inner sep=+0pt, minimum size=2pt},
  ]
\path[rotate=-30]
  node {}
  child foreach \cntI in {1,...,3} {
    node {}
    child foreach \cntII in {1,...,2} { 
      node {}
      child foreach \cntIII in {1,...,2} {
        node {}
        child foreach \cntIV in {1,...,2} 
      }
    }
  };
\end{tikzpicture}
\hspace{0.5in}
	\begin{tikzpicture}[scale=0.7]
	\begin{axis}[xmin=-1.5, xmax=1.5, ymin=-1.5, ymax=1.5,
	axis lines*=middle, axis line style={draw=none},
	xlabel=\empty,
	ylabel=\empty,
	xtick=\empty,
	xticklabels=\empty,
	ytick=\empty,
	yticklabels=\empty]
	\addplot[domain={-1.6:1.4}] {0};
	\addplot[color=black] coordinates {(-0.3,-1.3) (-0.3,1.3)};
	\addplot[domain={-0.7:0.1}] {0.8};
	\addplot[domain={-0.7:0.1}] {-0.8};
	\addplot[color=black] coordinates {(-1.2,-0.5) (-1.2,0.5)};
	\addplot[color=black] coordinates {(0.7,-0.5) (0.7,0.5)};
	\addplot[domain={-1.35:-1.05}] {0.3};
	\addplot[domain={-1.35:-1.05}] {-0.3};
	\addplot[domain={0.55:0.85}] {0.3};
	\addplot[domain={0.55:0.85}] {-0.3};
	\addplot[color=black] coordinates {(-0.5,0.65) (-0.5,0.95)};
	\addplot[color=black] coordinates {(-0.1,0.65) (-0.1,0.95)};
	\addplot[color=black] coordinates {(-0.5,-0.65) (-0.5,-0.95)};
	\addplot[color=black] coordinates {(-0.1,-0.65) (-0.1,-0.95)};
	\addplot[color=black] coordinates {(1.2,-0.15) (1.2,0.15)};
	\addplot[domain={-0.45:-0.15}] {1.2};
	\node at (axis cs:-0.3,0) [circle, scale=0.5, draw=black!80,fill=black!80] {};
	\node at (axis cs:-0.3,-0.8) [circle, scale=0.5, draw=black!80,fill=black!80] {};
	\node at (axis cs:-0.3,0.8) [circle, scale=0.5, draw=black!80,fill=black!80] {};
	\node at (axis cs:-0.5,0.8) [circle, scale=0.5, draw=black!80,fill=black!80] {};
	\node at (axis cs:-0.1,0.8) [circle, scale=0.5, draw=black!80,fill=black!80] {};
	\node at (axis cs:-0.5,-0.8) [circle, scale=0.5, draw=black!80,fill=black!80] {};
	\node at (axis cs:-0.1,-0.8) [circle, scale=0.5, draw=black!80,fill=black!80] {};
	\node at (axis cs:-0.3,1.2) [circle, scale=0.5, draw=black!80,fill=black!80] {};
	\node at (axis cs:-1.2,0.3) [circle, scale=0.5, draw=black!80,fill=black!80] {};
	\node at (axis cs:-1.2,-0.3) [circle, scale=0.5, draw=black!80,fill=black!80] {};
	\node at (axis cs:0.7,0.3) [circle, scale=0.5, draw=black!80,fill=black!80] {};
	\node at (axis cs:0.7,-0.3) [circle, scale=0.5, draw=black!80,fill=black!80] {};
	\node at (axis cs:-1.2,0) [circle, scale=0.5, draw=black!80,fill=black!80] {};
	\node at (axis cs:0.7,0) [circle, scale=0.5, draw=black!80,fill=black!80] {};
	\node at (axis cs:1.2,0) [circle, scale=0.5, draw=black!80,fill=black!80] {};

\end{axis}
\end{tikzpicture}
		\caption{The Bruhat-Tits buildings of $\GL(2, \Q_2)$ and $\GL(2, \Q_3)$}
		\label{fig-Tree-Q2-Q3}
	\end{figure}
	
	Let us determine for what other bases $B' = \{b'_1, b'_2\}$ the vertex $[\Lambda]$ lies in the corresponding apartment $A_{B'}$. If $[\Lambda]$ lies in $A_{B'}$ then we can find $\gamma_1$, $\gamma_2 \in \Z$ such that:
	\begin{equation}   \label{equ-Lambda-B-B'}
		\O b_1 \oplus \O b_2 = \Lambda = \O \varpi^{\gamma_1} b'_1 \oplus \O \varpi^{\gamma_2} b'_2.
	\end{equation}
	Let $C = [c_{ij}]$, $1 \leq i, j \leq 2$, be the change of coordinates matrix from $B$ to $B'$. That is, $b'_i = \sum\limits_{j=1}^2 c_{ij} b_j$. The equation \eqref{equ-Lambda-B-B'} is then equivalent to 
	$$C D \in \GL(2, \O),$$ where $D = \operatorname{diag}(\varpi^{\gamma_1}, \varpi^{\gamma_2})$. 
	For instance, we can take $\gamma_1 = \gamma_2 = 0$ and $C = \begin{bmatrix} 1 &  \varpi^\delta \\ 0 & 1 \\ \end{bmatrix}$ with $\delta \geq 0$. Then $C D = C \in \GL(2, \O)$ and thus
	$$\O b_1 \oplus \O b_2 = \Lambda = \O b_1 \oplus \O (\varpi^\delta b_1 + b_2).$$ Hence $\Lambda$ belongs to the apartment $A_{B'}$ where $B' = \{b_1, \varpi^\delta b_1 + b_2 \}$. 

To demonstrate how apartments can overlap we consider the apartments associated to the bases $B = \{b_1, b_2\}$ and $B' = \{b_1+b_2, b_2\}$. The vertices in the apartments $A_B$ and $A_{B'}$ correspond respectively to the chain of lattices:  	
$$\cdots \subset \O b_1 \oplus \O\varpi b_2 \subset \O b_1 \oplus \O b_2 \subset \O b_1 \oplus \O \varpi^{-1} b_2 \subset \cdots $$
and
$$ \cdots \subset \O (b_1+b_2) \oplus \O\varpi b_2 \subset \O (b_1+b_2) \oplus \O b_2 \subset \O (b_1+b_2) \oplus \O \varpi^{-1} b_2 \subset \cdots.$$
One verifies that for any $k \geq 0$, $\O b_1 \oplus \O\varpi^{-k} b_2 = \O (b_1+b_2) \oplus \O\varpi^{-k} b_2$. Thus the two chains of lattices overlap on the half-infinite path with vertices $[\O b_1 \oplus \O\varpi^{-k} b_2] = [\O (b_1+b_2) \oplus \O\varpi^{-k} b_2]$, $k \geq 0$, but become separate afterwards (Figure \ref{fig-two-apts}).  
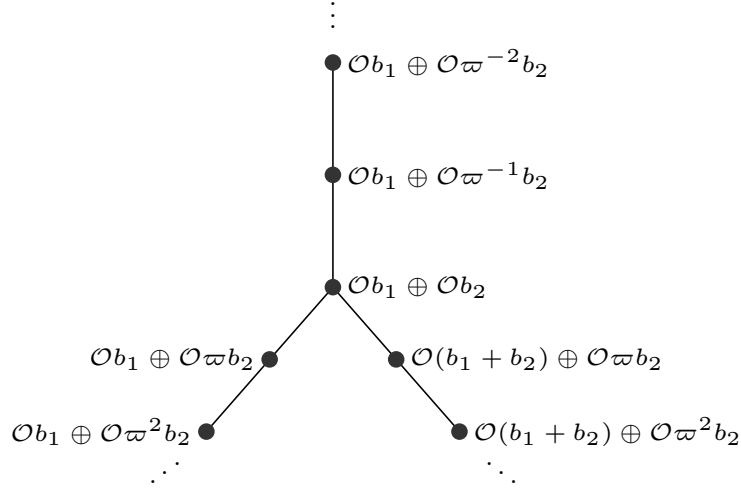
\begin{figure}   
\centering
\begin{tikzpicture}[scale=1.5]
	\begin{axis}[xmin=-5, xmax=6, ymin=-4, ymax=4,
	axis lines*=middle, axis line style={draw=none},
	xlabel=\empty,
	ylabel=\empty,
	xtick=\empty,
	xticklabels=\empty,
	ytick=\empty,
	yticklabels=\empty]
	\addplot[domain={0:1.8}] {-\x};
	\addplot[domain={-1.8:0}] {\x};
	\addplot[color=black] coordinates {(0, 0) (0, 2.8)};;
	\node[black] at (axis cs:0,3.5){\tiny{$\vdots$}};
	\node at (axis cs:0,0) [circle, scale=0.4, draw=black!80,fill=black!80] {};
	\node[black,right] at (axis cs:0,0){\tiny{$\mathcal{O}b_1\oplus\mathcal{O}b_2$}};
	\node at (axis cs:-0.9,-0.9) [circle, scale=0.4, draw=black!80,fill=black!80] {};
	\node[black,left] at (axis cs:-0.9,-0.9){\tiny{$\mathcal{O}b_1\oplus\mathcal{O}\varpi b_2$}};
	\node at (axis cs:0.9,-0.9) [circle, scale=0.4, draw=black!80,fill=black!80] {};
	\node[black,right] at (axis cs:0.9,-0.9){\tiny{$\mathcal{O}(b_1+b_2)\oplus\mathcal{O}\varpi b_2$}};
	\node at (axis cs:1.8,-1.8) [circle, scale=0.4, draw=black!80,fill=black!80] {};
	\node[black,right] at (axis cs:1.8,-1.8){\tiny{$\mathcal{O}(b_1+b_2)\oplus\mathcal{O}\varpi^2 b_2$}};
	\node at (axis cs:-1.8,-1.8) [circle, scale=0.4, draw=black!80,fill=black!80] {};
	\node[black,left] at (axis cs:-1.8,-1.8){\tiny{$\mathcal{O}b_1\oplus\mathcal{O}\varpi^2 b_2$}};
	\node[black] at (axis cs:2.4,-2.2){\tiny{$\ddots$}};
	\node[black] at (axis cs:-2.4,-2.2){\tiny{$\iddots$}};
	
	\node at (axis cs:0,1.4) [circle, scale=0.4, draw=black!80,fill=black!80] {};
	\node[black,right] at (axis cs:0,1.4){\tiny{$\mathcal{O}b_1\oplus\mathcal{O}\varpi^{-1}b_2$}};
	\node at (axis cs:0,2.8) [circle, scale=0.4, draw=black!80,fill=black!80] {};
	\node[black,right] at (axis cs:0,2.8){\tiny{$\mathcal{O}b_1\oplus\mathcal{O}\varpi^{-2}b_2$}};

	\end{axis}
	\end{tikzpicture}

\caption{Two apartments (infinite paths) corresponding to the bases $B=\{b_1, b_2\}$ and $B' = \{b_1+b_2, b_2\}$ overlapping on a half-infinite path.}  \label{fig-two-apts}
\label{fig}
\end{figure}

\subsection{The Tits building of $\GL(r, K)$}  \label{subsec-Tits-bldg}
We now turn to the discussion of Tits building of $\GL(E)$, which is a spherical building associated to the group $\GL(E)$. As in the case of Burhat-Tits building, we describe the Tits building of $\GL(E)$ in two ways: as a simplicial complex $\Delta_\sph(E)$ and, its geometric realization $\B_\sph(E)$ as a topological space. 

The abstract simplicial complex, $\Delta_\sph(E)$, can be described as follows. Its vertices are the proper nonzero vector subspaces of $E$. Two subspaces $F_1$ and $F_2$ are connected if one of them is a subset of the other. The $(k-1)$-simplices of $\Delta_\sph(E)$ are formed by sets of $k$ mutually connected vertices, hence $(k-1)$-simplices correspond to partial flags
$$F_\bullet = (\{0\} \subsetneqq F_1 \subsetneqq \cdots \subsetneqq F_{k} \subsetneqq E).$$ Maximal simplices, i.e. chambers, correspond to complete flags. 
The apartments in $\Delta_\sph(E)$, correspond to frames in $E$. 
A flag $F_\bullet$ is said to be \emph{adapted} to a frame $L = \{L_1, \ldots, L_r\}$ if all the subspaces in the flag are spanned by a subset of the $L_i$. The apartment corresponding to a frame $L$ is the collection of all flags adapted to $L$. 

The geometric realization of the spherical building of $\GL(E)$ can be described using valuations on $E$. This is an analogue of the Goldman-Iwahori realization of the Bruhat-Tits building of $\GL(E)$ as the space of equivalence classes of additive norms on $E$ (Section \ref{subsec-BT-bldg}). This description of Tits building of the general linear group appears in \cite[Section 1.4]{KM-tpb},  the authors are not aware of another reference for this. 

Two valuations $v_1$, $v_2$ are said to be \emph{equivalent} if there exist $\lambda, c \in \R$ with $\lambda > 0$ such that $\lambda v_1 + c = v_2$. 

\begin{definition}[Geometric realization of Tits building]  \label{def-geo-real-Tits}
The geometric realization $\B_\sph(E)$ can be constructed as the set of all equivalence classes of valuations on $E$. The collection of all equivalence classes of valuations that are adapted to a given frame $L$ is the (geometric realization of) the apartment associated to $L$, which we denote by $A_\sph(L)$. Each valuation $v$ adapted to $L$ is uniquely determined by the values $\{v(L_1 \setminus \{0\}), \ldots, v(L_r \setminus \{0\})\}$. it follows that $A_\sph(L)$ can be identified with the $(r-2)$-dimensional sphere $\R^{r-1} / \R_{> 0} \cong S^{r-2}$ (since we are taking equivalence classes of valuations).
\end{definition}


\begin{figure} 
	\begin{tikzpicture}[scale=0.2]

		\draw[-, thick] (-6,0)--(6,0); 
		\draw[thick, domain=-2.6:2.6] plot[smooth]
		(\x,{2*\x});
		\draw[thick, domain=-2.6:2.6] plot[smooth]
		(\x,{-2*\x});
		\draw[thick](0,0) circle (5);
		
		\node at (0,0) {$\bullet$};
		\node at (5,0) {$\bullet$};
		\node at (-5,0) {$\bullet$};
		\node at (2.2,4.4) {$\bullet$};
		\node at (-2.2,4.4) {$\bullet$};
		\node at (2.2,-4.4) {$\bullet$};
		\node at (-2.2,-4.4) {$\bullet$};
		
	\end{tikzpicture}
	\caption{An apartment in the Tits building of $\SL(3)$, a subdivision of circle into $6$ arcs}  
	\label{fig-Coxeter-complex-SL3}

\vspace{.5cm}    
\begin{tikzpicture}[scale=1.5,tdplot_main_coords]

\shadedraw[tdplot_screen_coords,ball color = white] (0,0) circle (1);

\tdplotsetrotatedcoords{0}{90}{90};
\draw[
    tdplot_rotated_coords,
] (1,0,0) arc (0:180:1);
 
\tdplotsetrotatedcoords{90}{53}{90};
\draw[
    tdplot_rotated_coords,
] (1,0,0) arc (0:180:1);

\tdplotsetrotatedcoords{0}{37}{90};
\draw[
    tdplot_rotated_coords,
] (0.92,-0.3,0) arc (0:132:1.1);

\tdplotsetrotatedcoords{0}{45}{90};
\draw[
    tdplot_rotated_coords,
] (0.92,-0.3,0) arc (360:335:1.25);

\tdplotsetrotatedcoords{0}{143}{90};
\draw[
    tdplot_rotated_coords,
] (0.92,-0.3,0) arc (0:132:1.1);

\tdplotsetrotatedcoords{0}{135}{90};
\draw[
    tdplot_rotated_coords,
] (0.92,-0.3,0) arc (360:335:1.25);
 
\draw[thick,-] (-0.83,-0.57,0) -- (0.9,0.4,0) node{};
\draw[thick,-] (-0.9,0.4,0) -- (0.83,-0.57,0) node{};

\end{tikzpicture}
	\caption{An apartment in the Tits building of $\SL(4)$, a triangulation of $2$-sphere}
	\label{fig-Coxeter-complex-SL4}
\end{figure}

\section{Toric vector bundles over a discrete valuation ring}   \label{sec-tvbs-dvr}
As before let $\X = \X_\Sigma$ be a toric scheme over $\Spec(\O)$ where $\Sigma$ is a fan in $N_\R \times \R_{\geq 0}$ with the corresponding polyhedral complex $\Sigma_1$ in $N_\R \times \{1\}$. We recall that $\X$ comes with a distinguished point $x_0$ in the open chart $\U_0$ giving an identification of this open set with the torus $T_\eta = T_K$. 


\begin{definition}   \label{def-tvb-over-S}
A \emph{toric vector bundle over $\X$} is a $T$-linearized vector bundle $\E$ over $\X$.
A toric vector bundle is the same as a $T$-linearized locally free sheaf of constant rank $\O_\X$-modules on $\X$. By a \emph{morphism} of toric vector bundles over $\X_\Sigma$ we mean a torus equivariant morphism of vector bundles. Toric vector bundles over $\X$ and their morphisms form a category. 
\end{definition}

We refer the reader to \cite[Definition 1.6]{Mumford-GIT}
for the definition of a $T$-linearization of a vector bundle on a $T$-scheme (the definition in the above mentioned reference is for line bundles but the same works for vector bundles as well). In short, a $T$-linearization of $\E$ is a lifting of action of $T$ on $\X$ to an action of $T$ on $\E$ by morphisms of vector bundles.

In this section we give a classification of toric vector bundles over $\X$ in terms of piecewise linear maps from the fan $\Sigma$ to $\tilde{\B}(E)$, the total Bruhat-Tits building of $\GL(E)$, for finite dimensional $K$-vector spaces $E$. When $\Sigma$ is complete, this is the same as a piecewise affine map from the polyhedral complex $\Sigma_1$ to the extended Bruhat-Tits building $\tilde{\B}_\aff(E)$.

\subsection{Local equivariant triviality} \label{subsec-local-equiv-trivial}
As in the case of toric vector bundles over a field, the following is a key step in the classification. The proof is similar to the one for toric vector bundles over a field (see \cite[Proposition 2.1.1]{Klyachko}). 
\begin{proposition}   \label{prop-local-equiv-trivial}
Let $\E$ be a toric vector bundle on an affine toric scheme $\U_{\sigma}$ corresponding to a cone $\sigma$ in $N_\R \times \R_{\geq 0}$. Then $\E$ is equivariantly trivial. That is, there exist characters $u_1, \ldots, u_r \in M$ such that $H^0(\U_\sigma, \E) \cong \bigoplus_{i=1}^r R_{\sigma, i}$ where $R_{\sigma, i} \cong R_\sigma$ and $T$ acts on $R_{\sigma, i}$ via character $u_i$.
\end{proposition}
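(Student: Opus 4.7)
The plan is to adapt Klyachko's proof of the corresponding statement over a field (Proposition~\ref{prop-toric-vb-over-affine-equiv-trivial}), with the main new input being a weight-by-weight Nakayama argument over $\O$. Write $R = S_\sigma$ and $M = H^0(\X_\sigma, \E)$. Since $\E$ is locally free of rank $r$ on the affine scheme $\X_\sigma$, $M$ is a finitely generated projective $R$-module of rank $r$, and the $T$-linearization endows it with an $M$-grading $M = \bigoplus_{u \in M} M_u$. Assuming $\sigma \not\subset N_\R \times \{0\}$ (the opposite case reduces directly to Klyachko's statement over the field $K$), the degree-$0$ piece of $R$ in this grading is $\O$. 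By Gordan's lemma $R$ is a finitely generated $\O$-algebra, hence noetherian; and each $R_u$ is a free $\O$-module of rank at most one, so each $M_u$ is finitely generated over the local ring $\O$.

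First I would construct a candidate splitting by reducing modulo $\varpi$. The special fiber $\X_{\sigma,o} = \Spec(R/\varpi R)$ is a (possibly reducible) affine $T_o$-scheme over $\k$, and $\E_o = \E \otimes_\O \k$ is a $T_o$-equivariant vector bundle on it. Klyachko's argument applied at the unique closed $T_o$-orbit of $\X_{\sigma,o}$ (or component-by-component and glued) produces an equivariant splitting
$$M/\varpi M \ \cong\  \bigoplus_{i=1}^{r}(R/\varpi R)(u_i) \cdot \bar e_i,$$
with $\bar e_i$ a weight vector of weight $u_i \in M$. Flatness of $\E$ over $\O$ implies each $M_u$ is $\O$-flat and that $M_{u_i}$ surjects onto $(M/\varpi M)_{u_i}$, so I can lift each $\bar e_i$ to $e_i \in M_{u_i}$ and assemble them into an equivariant $R$-linear map
$$\phi : \bigoplus_{i=1}^{r} R(u_i) \longrightarrow M$$
whose reduction $\phi \otimes_\O \k$ is the isomorphism above.

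To finish I would show $\phi$ is an isomorphism using a graded Nakayama argument. The cokernel $Q = \mathrm{coker}\,\phi$ is a finitely generated $M$-graded $R$-module with $Q \otimes_\O \k = 0$, so $Q_u = \varpi Q_u$ for every $u$; since each $Q_u$ is a finitely generated $\O$-module, Nakayama over the local ring $\O$ forces $Q_u = 0$, hence $Q = 0$. Flatness of $M$ together with $Q = 0$ then gives $\ker\phi \otimes_\O \k = 0$ via the resulting short exact sequence, and the same weight-by-weight argument yields $\ker\phi = 0$. The resulting $T$-equivariant $R$-module isomorphism exhibits the desired equivariant decomposition $\E \cong \bigoplus_i \L_{u_i}$.

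The main obstacle is the Nakayama step: because $R$ is not a local ring, one cannot invoke Nakayama in a single stroke. The crucial observation is that the $M$-grading reduces the question to one weight at a time, where each graded piece is a finitely generated module over the local ring $\O$ and the classical Nakayama applies. A secondary subtlety is the possible reducibility of $\X_{\sigma,o}$ in the intermediate step, which is handled by working at the unique closed $T_o$-orbit or, equivalently, component-by-component.
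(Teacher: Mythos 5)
Your overall two-step strategy (split the restriction to the special fiber equivariantly, then lift using flatness and a weight-by-weight Nakayama argument over $\O$) is sound, and the lifting half is carried out correctly: after your case split, the degree-zero part of $S_\sigma$ is indeed $\O$, each graded piece of a finitely generated graded $S_\sigma$-module is a finitely generated $\O$-module, and the graded Nakayama argument for surjectivity and (via $\O$-flatness of $\E$) injectivity of $\phi$ goes through. The difficulty is that essentially all of the content of the Proposition has been pushed into the input you treat as known, namely the equivariant splitting of $\E_o$ on $\X_{\sigma,o}$. The special fiber is in general reducible and possibly non-reduced, so it is not an affine toric variety and Proposition \ref{prop-toric-vb-over-affine-equiv-trivial} cannot simply be quoted; moreover your parenthetical fallback ``component-by-component and glued'' does not work as stated, since equivariant splittings on the separate components need not agree where the components meet. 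What does work is precisely rerunning Klyachko's argument on $\X_{\sigma,o}$: one must check that $O_\sigma$ is the unique closed $T_o$-orbit and lies in the closure of every orbit (so every nonempty closed invariant subscheme contains it), that $H^0(\X_{\sigma,o},\E_o)\to H^0(O_\sigma,\E_o|_{O_\sigma})$ is a $T_o$-equivariant surjection so that weight sections lift to weight sections, and that the degeneracy locus of the lifted sections, being closed, invariant, and disjoint from $O_\sigma$, is empty. None of this is carried out in your proposal, and it is exactly this argument that constitutes the paper's proof.

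In fact the paper runs that closed-orbit argument once, directly on $\X_\sigma$ over $\O$ rather than on the special fiber: it trivializes $\E$ equivariantly on the closed orbit $O_\sigma$, lifts the weight sections through the equivariant surjection $H^0(\X_\sigma,\E)\to H^0(O_\sigma,\E|_{O_\sigma})$ (restriction to a closed subscheme of an affine scheme), and then observes that the locus where the lifted sections fail to be fiberwise linearly independent is a closed $T$-invariant subscheme not containing $O_\sigma$, hence empty; by local freeness the sections are then a free $S_\sigma$-basis of weight vectors. This produces the desired decomposition in one stroke and makes the reduction mod $\varpi$, the flatness considerations, and the graded Nakayama step unnecessary. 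So to complete your write-up you should either carry out the closed-orbit argument on $\X_{\sigma,o}$ in detail (at which point your extra lifting machinery is correct but redundant), or run it directly on $\X_\sigma$ as the paper does.
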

\begin{proof}
It suffices to show there are $T$-weight sections $s_1, \ldots, s_r \in H^0(\U_{\sigma}, \E)$ that freely generate $H^0(\U_{\sigma}, \E)$ as an $R_{\sigma}$-module. 
If the cone $\sigma$ lies in $N_\R \times \{0\}$, there is no special fiber $\X_{\sigma, o}$ and we are in the case of a toric vector bundle over an affine toric variety $\X_{\sigma, \eta}$. Thus, we can assume that $\sigma$ does not lie in $N_\R \times \{0\}$. In this case, 
there is a unique closed $T_s$-orbit $O_\sigma$ in the special fiber $\X_s$. Recall that $T_s = T_\k$ is the torus over the residue field $\k = \O / \m$. Since $O_\sigma$ is an orbit, the vector bundle $\E_{|O_\sigma}$ is equivariantly trivial. That is, we can find a basis of $T$-weight sections $s'_1, \ldots, s'_r \in H^0(O_\sigma, \E_{|O_\sigma})$. By general properties of coherent sheaves on affine schemes, one knows that the $R_{\sigma}$-module homomorphism:
$$j^*: H^0(\U_{\sigma}, \E) \to H^0(O_\sigma, \E_{|O_\sigma}),$$ induced by the inclusion morphism $j: O_\sigma \hookrightarrow \U_{\sigma}$, is surjective. Note that the $R_{\sigma}$-module structure on $H^0(O_\sigma, \E_{|O_\sigma})$ comes from the homomorphism $R_{\sigma} \to \k[O_\sigma]$ determined by the embedding $O_\sigma \hookrightarrow \U_{\sigma}$. Moreover, 
$j^*$ is $T$-equivariant. In other words, $j^*$ is $M$-grading preserving. Thus we can find $T$-weight sections $s_1, \ldots, s_r \in H^0(\U_\sigma, \E)$ 
such that $j^*(s_i) = s'_i$, for all $i$. We claim that these $s_i$ freely generate $H^0(\U_\sigma, \E)$ as an $R_{\sigma}$-module. First we claim that for any $x \in \U_\sigma$, the images of the $s_i$ in the fiber $\E_x$ are linearly independent. This is because the subscheme of $\U_{\sigma}$ where these images are linearly dependent is a closed $T$-invariant subscheme and hence, if non-empty, should contain $O_\sigma$. However, since the $s'_i$ form a basis for $H^0(O_\sigma, \E_{|O_\sigma})$, the degeneracy locus cannot contain $O_\sigma$ as a subscheme and must therefore be empty, which proves the claim. Now from the assumption that $\E$ is locally free we conclude that for all $x \in \X$ the images of $s_i$ in $\E_x$ form a basis. It follows that the $s_i$ are an $R_\sigma$-basis for $H^0(\U_{\sigma}, \E)$ as required. 
\end{proof}

\subsection{Graded piecewise linear maps}
\label{subsec-graded-PL-map}
As usual, let $\Sigma$ be a fan in $N_\R \times \R_{\geq 0}$ and $\Sigma_1$ (respectively $\Sigma_0$) the corresponding polyhedral complex in the affine hyperplane $N_\R \times \{1\}$ (respectively, fan in the hyperplane $N_\R \times \{0\}$). 

In \cite{KM-tpb}, the notion of an (integral) piecewise linear map from a fan in $N_\R$ to the extended Tits building $\tilde{\B}_\sph(E)$ is introduced. These integral piecewise linear maps are exactly the gadgets that classify toric vector bundles over a field. For the sake of completeness we recall this definition here.
\begin{definition}[Piecewise linear map to $\tilde{\B}_\sph(E)$]  \label{def-PL-map}
Let $\Sigma_0$ be a fan in $N_\R \times \{0\}$.
We say that a map $\Phi_0: |\Sigma_0| \to \tilde{\B}_\sph(E)$ is a \emph{piecewise linear} map if the following holds:
\begin{itemize}
\item[(a)] For any cone $\sigma \in \Sigma_0$, there is an extended apartment $\tilde{A}_\sph(\sigma)$ (not necessarily unique) such that the restriction $\Phi_{0, \sigma} :={\Phi_0}_{|\sigma}$ maps $\sigma$ into $\tilde{A}_\sph(\sigma)$.
\item[(b)] For any $\sigma \in \Sigma_0$, $\Phi_{0, \sigma}$ is the restriction of a linear map from $N_\R \times \{0\}$ to $\tilde{A}_\sph(\sigma) \cong \R^r$.
\end{itemize}
We say that $\Phi_0$ is an \emph{integral} piecewise linear map if for each $\sigma \in \Sigma_0$, $\Phi_{0, \sigma}$ is the restriction of an integral linear map from $N_\R \times \{0\}$ to $\tilde{A}_\sph(\sigma)$. In other words, $\Phi_{0, \sigma}$ sends lattice points to lattice points.
\end{definition}

In this section, we define the similar notion of a graded piecewise linear map $\Phi$ from a fan $\Sigma$ in $N_\R \times \R_{\geq 0}$ to the total Bruhat-Tits building $\tilde{\B}(E)$. The main result of the paper (Section \ref{subsec-main}) is that these integral graded piecewise linear maps are exactly the gadgets to classify toric vector bundles on toric schemes over a DVR.

We also define the notion of a piecewise affine map $\Phi_1$ from a polyhedral complex $\Sigma_1$ in $N_\R \times \{1\}$ to the extended Bruhat-Tits building $\tilde{\B}_\aff(E)$. The restriction of a graded piecewise linear map $\Phi$ to $|\Sigma_1|$ (respectively $|\Sigma_0|$)
gives a piecewise affine map $\Phi_1$ (respectively a piecewise linear map $\Phi_0$). In fact, the information of $\Phi$ is equivalent to the information of $(\Phi_0, \Phi_1)$ (see Proposition \ref{prop-Phi-0-1} below). 

Recall that when $\Sigma$ in a complete fan in $N_\R \times \R_{\geq 0}$, the fan $\Sigma_0$ can be recovered from the polyhedral complex $\Sigma_1$ as its recession fan. Thus, in this case, the information of $\Phi$ is the same as the information of $\Phi_1$ alone, and $\Phi_0$ can be recovered from $\Phi_1$ as its linear part.  

\begin{definition}[Graded piecewise linear map to $\tilde{\B}(E)$] \label{def-graded-PL-map}
Let $\Sigma$ be a fan in the upper half-space $N_\R \times \R_{\geq 0}$.
We say that a map $\Phi: |\Sigma| \to \tilde{\B}(E)$ is a \emph{graded piecewise linear} map if the following holds:
\begin{itemize}
\item[(a)] For any cone $\sigma \in \Sigma$, there is an extended apartment $\tilde{A}(\sigma)$ (not necessarily unique) such that the restriction $\Phi_\sigma := \Phi_{|\sigma}$ maps $\sigma$ into $\tilde{A}(\sigma)$.
\item[(b)] For any $\sigma \in \Sigma$, $\Phi_\sigma$ is the restriction of a linear map from $N_\R \times \R_{\geq 0}$ to $\tilde{A}(\sigma) \cong \R^r \times \R_{\geq 0}$.
\item[(c)] For any $m \geq 0$, the map $\Phi$ sends $|\Sigma_m| := |\Sigma| \cap (N_\R \times \{m\})$ to $\tilde{\B}_m(E)$. That is, it sends a point of level $m$ in the fan to a point of level $m$ in the extended building. 
\end{itemize}
We say that $\Phi$ is \emph{integral} if for each $\sigma \in \Sigma$, $\Phi_\sigma$ is the restriction of an integral linear map from $N_\R \times \R_{\geq 0}$ to $\tilde{A}(\sigma)$. 
\end{definition}

We recall that a function $\Phi: N_\R \times \{1\} \to \R^r$ is an \emph{affine map} if $\Phi$ is a linear map followed by a translation. In other words, 
there is a linear map $\Phi_0: N_\R \times \{0\} \to \R^r$ and $b \in \R^r$ such that for all $x \in N_\R$ we have: 
$$\Phi(x, 1) = \Phi_0(x, 0) + b.$$
The linear map $\Phi_0$ is called the \emph{linear part} of $\Phi$. We say that $\Phi$ is an \emph{integral} affine function if $\Phi(N \times \{1\}) \subset \Z^r$. This means that the entries of the matrix of $\Phi_0$, with respect to a $\Z$-basis for $N$ and the standard basis for $\Z^r$, are integers, and moreover, $b \in \Z^r$.

\begin{definition}[Piecewise affine map to $\tilde{\B}_\aff(E)$]  \label{def-PA-map}
Let $\Sigma_1$ be a polyhedral complex in the affine space $N_\R \times \{1\}$.
A map $\Phi_1: |\Sigma_1| \to \tilde{\B}_\aff(E)$ is a \emph{piecewise affine} map if the following holds:
\begin{itemize}
\item[(a)] For any polyhedron $\Delta \in \Sigma_1$, there is a (not necessarily unique) extended apartment $\tilde{A}_\aff(\Delta)$ such that the restriction $\Phi_{1, \Delta} := {\Phi_1}_{|\Delta}$ maps $\Delta$ into $\tilde{A}_\aff(\Delta)$.
\item[(b)] For any $\Delta \in \Sigma_1$, $\Phi_{1, \Delta}$ is the restriction of an affine map from $N_\R \times \{1\}$ to the affine space $\tilde{A}_\aff(\Delta)$.
\end{itemize}
We say that $\Phi$ is an \emph{integral} piecewise affine map if
for each $\Delta \in \Sigma_1$, $\Phi_{1, \Delta}$ is the restriction of an integral affine map from $N_\R \times \{1\}$ to $\tilde{A}_\aff(\Delta)$.
\end{definition}

Let $\Phi: |\Sigma| \to \tilde{\B}(E)$ be a graded piecewise linear map. Then the restriction $\Phi_0: |\Sigma_0| \to \tilde{\B}_0(E) = \tilde{\B}_\sph(E)$ of $\Phi$ to $|\Sigma_0|$ is a piecewise linear map in the sense of Definition \ref{def-PL-map}. Moreover, the restriction $\Phi_1: |\Sigma_1| \to \tilde{\B}_1(E) = \tilde{\B}_\aff(E)$ of $\Phi$ to $|\Sigma_1| := |\Sigma| \cap (N_\R \times \{1\})$ is a piecewise affine map in the sense of Definition \ref{def-PA-map}.

One can recover the graded piecewise linear map $\Phi$ from $\Phi_0$ and $\Phi_1$ in an obvious manner. For $(x, m) \in |\Sigma| \subset N_\R \times \R_{\geq 0}$ we have:
\begin{equation}  \label{equ-Phi-0-1}
\Phi(x, m) = 
\begin{cases}
m\Phi_1(x/m, 1) & \quad m \neq 0,\\
\Phi_0(x, 0) & \quad m = 0.\\
\end{cases}
\end{equation}

One verifies the following.
\begin{proposition}  \label{prop-Phi-0-1}
The equation \eqref{equ-Phi-0-1} gives a one-to-one correspondence between the graded piecewise linear maps $\Phi: |\Sigma| \to \tilde{\B}(E)$ and pairs $(\Phi_0, \Phi_1)$ of piecewise linear maps $\Phi_0: |\Sigma_0| \to \tilde{\B}_\sph(E)$ and piecewise affine maps $\Phi_1: |\Sigma_1| \to \tilde{\B}_\aff(E)$ that are compatible in the following sense: For any polyhedron $\Delta \in \Sigma_1$, the linear part of $\Phi_1$ on the recession cone of $\Delta$ agrees with $\Phi_0$. In particular, when the fan $\Sigma$ is complete, the piecewise linear map $\Phi_0$ is determined by the piecewise affine map $\Phi_1$ as its linear part.
\end{proposition}

\begin{remark}   \label{rem-linear-part}
The above definitions extend to arbitrary affine buildings.
\end{remark}

\begin{remark}  \label{rem-pa-apt}
Let $\Phi_1: |\Sigma_1| \to \tilde{\B}(E)$ be an integral piecewise affine map. For any polyhedron $\Delta \in \Sigma_1$, the requirement that $\Phi_{1, \Delta} := {\Phi_1}_{|\Delta}: \Delta \to \tilde{A}_\aff(\Delta)$ is an integral affine map means that there exists a $K$-basis $B_\Delta = \{b_{\Delta, 1}, \ldots, b_{\Delta, r}\}$ for $E$ and $\{u_{\Delta, 1}, \ldots, u_{\Delta, r}\} \subset M$ such that for any $x \in \sigma$, $\Phi_{1, \Delta}(x)$, as an additive norm on $E$, is given by
\begin{equation}   \label{equ-Phi-sigma} 
\Phi_{1, \Delta}(x)\left(\sum_i \lambda_i b_{\Delta, i}\right) = \min\{ \val(\lambda_i) + \langle u_{\Delta, i}, x \rangle \mid i=1, \ldots, r\}.
\end{equation}    
\end{remark}


\begin{definition}[Morphism of graded piecewise linear maps]   \label{def-morphism-pa-map}
Let $F: E \to E'$ be a linear transformation between finite dimensional $K$-vector spaces $E$, $E'$ and let $\Phi: |\Sigma| \to \tilde{\B}(E)$ and $\Phi': |\Sigma| \to \tilde{\B}(E')$ be two graded piecewise linear maps. We say that $F$ gives a \emph{morphism} $\Phi \to \Phi'$ if for any $(x, m) \in |\Sigma|$ we have: $$\Phi(x, m) \preceq F^*(\Phi'(x, m)),$$ 
(see Definition \ref{def-pull-back} and cf. Proposition \ref{prop-F-v-v'}).
\end{definition}

For a fixed fan $\Sigma$ in $N_\R \times \R_{\geq 0}$, the collection of graded piecewise linear maps $\Phi:|\Sigma| \to \tilde{\B}(E)$, for finite dimensional $K$-vector spaces $E$, together with the above notion of morphism, form a category. 

\subsection{Main theorems}   \label{subsec-main}
We break our classification of toric vector bundles over a DVR into a few theorems (Theorems \ref{th-main}, \ref{th-main2}, \ref{th-main3}). We present the proofs later in this section. The proofs involve using the notion of \emph{ceiling} of an additive norm (Definition \ref{def-ceiling}). 

As usual, let $\Sigma$ be a fan in $N_\R \times \R_{\geq 0}$ with $\X = \X_\Sigma$ the corresponding toric scheme. Recall that $x_0$ is the distinguished point in the open orbit in $\U_0$ in the generic fiber $\X_\eta \cong T_\eta$. For a toric vector bundle $\E$ over $\X_\Sigma$, we let $E=\E_{x_0}$. It is an $r$-dimensional $K$-vector space where $r=\rank(\E)$.
\begin{theorem}    \label{th-main}
There is a one-to-one correspondence $\E \mapsto \Phi_\E$ between the isomorphism classes of toric vector bundles over a toric scheme $\X=\X_\Sigma$ and the integral graded piecewise linear maps from $|\Sigma|$ to $\tilde{\B}(E)$, the total Bruhat-Tits buildings of general linear groups of finite dimensional $K$-vector spaces $E$.
\end{theorem}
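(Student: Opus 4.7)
The strategy mirrors Klyachko's proof of Theorem \ref{th-Klyachko}, with Proposition \ref{prop-local-equiv-trivial} replacing the local equivariant triviality result over a field. Given a toric vector bundle $\E$ on $\X_\Sigma$, for each polyhedron $\Delta \in \Sigma_1$ with associated cone $\sigma$ (the cone over $\Delta$), I apply Proposition \ref{prop-local-equiv-trivial} to split $H^0(\X_\sigma, \E)$ equivariantly as $\bigoplus_{i=1}^r S_\sigma \cdot s_i$ with weights $(u_i, k_i) \in \tilde M$. Restricting the $s_i$ to the fibre at $x_0$ yields a $K$-basis $B_\Delta = \{s_i(x_0)\}$ of $E$, and the formula \eqref{equ-Phi-sigma} then defines an integral affine map $\Phi_\Delta: \Delta \to \tilde{A}_\aff(B_\Delta) \subset \tilde{\B}_\aff(E)$.

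Two things must be checked in order to assemble the $\Phi_\Delta$ into a global piecewise affine map. First, the additive norm $\Phi_\Delta(x)$ is intrinsic to $\E$, independent of the splitting chosen. For each rational $x \in \Delta$ such that $(x,1)$ lies on a primitive integer ray in $N_\R \times \R_{\geq 0}$, that ray determines a divisorial valuation on the function field $K(\X)$, and $\Phi_\Delta(x)(e)$ equals the order along this divisor of the (unique) $T$-equivariant rational extension of $e$ to a section of $\E$ over $\X_\eta$; this description is manifestly intrinsic, and by affineness of $\Phi_\Delta$ and density of such rational points, $\Phi_\Delta$ is determined by these values. Second, if $\Delta' \prec \Delta$ is a face, the equivariant splitting on $\X_\sigma$ restricts to one on $\X_{\sigma'}$ for the face $\sigma'\prec\sigma$, and combined with the intrinsic description this forces $\Phi_\Delta|_{\Delta'} = \Phi_{\Delta'}$. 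One therefore obtains a well-defined piecewise affine map $\Phi: |\Sigma_1| \to \tilde{\B}_\aff(E)$, which is integral by construction.

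For the reverse direction, given an integral piecewise affine $\Phi$, each top-dimensional $\Delta \in \Sigma_1$ supplies a basis $B_\Delta$ and weights $u_{\Delta,i} \in \tilde M$ via \eqref{equ-Phi-sigma}, and I set $\E_\sigma = \bigoplus_i \L_{u_{\Delta,i}}$ on $\X_\sigma$. The local pieces are glued across overlaps $\X_{\sigma\cap\tau}$ by the $K$-linear change-of-basis transformation between $B_\Delta$ and $B_{\Delta'}$. The crucial point is that the equality of additive norms $\Phi_\Delta(x) = \Phi_{\Delta'}(x)$ for all $x \in \Delta \cap \Delta'$, in conjunction with Proposition \ref{prop-F-v-v'}, forces this change-of-basis matrix to respect the weight structure and to have entries in $S_{\sigma\cap\tau}$. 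Hence the gluing is an isomorphism of free $S_{\sigma\cap\tau}$-modules of the prescribed weights. The cocycle condition on triple overlaps is automatic since all local data descend from the single map $\Phi$, and the two constructions are mutually inverse by direct comparison of the defining formulas.

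\textbf{Main obstacle.} The principal technical hurdle is the gluing step in the reverse construction: one must show that the $K$-linear change-of-basis between two weight bases $B_\Delta$ and $B_{\Delta'}$ arising from polyhedra that meet in $\Delta\cap\Delta'$ has entries in $S_{\sigma\cap\tau}$ with the correct character supports, so that the resulting sheaf is a locally free $\O_\X$-module and not merely a coherent sheaf that is only locally free on the generic fibre. Equivalently, on the forward side, one must show that the lattice associated to the intrinsic additive norm $\Phi(x)$ is simultaneously adapted to the bases $B_\Delta$ coming from each polyhedron containing $x$. This is precisely the step that encodes the axiomatic structure of $\tilde{\B}_\aff(E)$ as a building---how two apartments $\tilde{A}_\aff(B_\Delta)$ and $\tilde{A}_\aff(B_{\Delta'})$ overlap in a common subcomplex---into the cocycle data of the bundle, playing a role entirely analogous to the compatibility condition \eqref{equ-Klyachko-comp-condition} in Klyachko's theorem.
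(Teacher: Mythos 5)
Your proposal is correct in outline and shares the paper's overall architecture (Proposition \ref{prop-local-equiv-trivial} produces local weight bases, a formula like \eqref{equ-Phi-sigma} gives the local affine maps, and the converse is a gluing of free $M$-graded modules), but you handle the key well-definedness/gluing step by a genuinely different argument. The paper proves that $\Phi_\Delta$ and $\Phi_{\Delta'}$ agree on $\Delta\cap\Delta'$ combinatorially: it recovers the lattices $\Lambda_{v,u}$ from the norms via the ceiling formula \eqref{equ-Phi-Delta}, shows in Lemma \ref{lem-gluing} that equality of all these lattices forces equality of the norms at the (possibly non-lattice) vertices, and then matches linear parts on recession cones. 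You instead characterize $\Phi_\Delta(x)$ intrinsically at rational points $x$ as an order of vanishing of the invariant rational extension of $e$ along the $T$-invariant divisorial valuation attached to the ray through $(x,1)$, and conclude by density of rational points and continuity of $x\mapsto\Phi_\Delta(x)(e)$. This is sound: if $(v,\ell)$ is the primitive generator of that ray, a computation in a frame coming from any adapted splitting shows the order equals $\ell\cdot\min_i\bigl(\val(\lambda_i)-\langle u_{\Delta,i},x\rangle\bigr)$, and this is frame-independent because the valuation is computed at a codimension-one center on a toric modification; hence $\Phi_\Delta(x)$ and $\Phi_{\Delta'}(x)$ coincide. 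Two normalization caveats, though: (i) you must divide the order by $\ell$ --- as literally stated your identity holds only when $(x,1)$ is itself primitive, i.e.\ at lattice points of $\Delta$, which are not dense; this denominator is exactly where the paper's ceiling function lives; (ii) with the paper's conventions (the degree-$u$ component is the $(-u)$-weight space) the invariant extension computes the norm with $u_{\Delta,i}$ replaced by $-u_{\Delta,i}$, so a sign in the dictionary must be fixed. Neither is a genuine gap. For the converse the paper only says ``reverse the construction,'' so your explicit cocycle gluing is a fair plan; but note that proving the transition matrices lie in $S_{\sigma\cap\tau}$ amounts to showing that the $M$-graded pieces of an adapted free module are exactly the lattices $\bigoplus_i \O\,\varpi^{\lceil\,\cdot\,\rceil}b_{\Delta,i}$ recovered from $\Phi$ as in \eqref{equ-L-rho} and \eqref{equ-Phi-Delta}; Proposition \ref{prop-F-v-v'} only converts norm inequalities into lattice containments once that computation is done, so the ceiling computation does not disappear from your route --- it migrates to the converse. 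What each approach buys: the paper's explicit lattice formulas are reused for Theorems \ref{th-main2} and \ref{th-main3}, while your intrinsic valuation-theoretic description makes independence of the chosen splitting and the face/overlap compatibilities essentially automatic.
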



If $\E$ is a toric vector bundle over a toric scheme $\X=\X_\Sigma$ then the generic fiber $\X_\eta$ is a usual toric variety over the field $K$ and $\E_{|\X_\eta}$ is a usual toric vector bundle on it. The next theorem describes the piecewise linear map (equivalently, the Klyachko filtrations) associated to the toric vector bundle $\E_{|\X_\eta}$. 
\begin{theorem}  \label{th-main2}
Given a toric vector bundle $\E$ with associated integral graded piecewise linear map $\Phi = \Phi_\E: |\Sigma| \to \tilde{\B}(E)$, the toric vector bundle $\E_{|\X_\eta}$ over the generic fiber $\X_\eta$, which is a toric vector bundle over the field $K$, corresponds to the piecewise linear map $\Phi_0: |\Sigma_0| \to \tilde{\B}_\sph(E)$, obtained by restricting $\Phi$ to $|\Sigma_0|$ (see \cite[Example 2.8]{KM-tpb}). 
\end{theorem}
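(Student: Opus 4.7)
The plan is to compare the classification data on both sides locally, on each affine chart of $\X_\Sigma$, and then use Klyachko's theorem (Theorem \ref{th-Klyachko}) on the generic fiber to match the filtrations $E^\rho_j$ with the prevaluations determined by $\Phi_0$. I would fix a polyhedron $\Delta \in \Sigma_1$ with corresponding cone $\sigma \in \Sigma$ over $\Delta$, and note that the generic fiber $\X_{\sigma,\eta}$ is the affine toric variety $X_{\sigma_0}$ where $\sigma_0 = \recc(\Delta) \subseteq N_\R \times \{0\}$ is a cone in $\Sigma_0$. Since every cone in $\Sigma_0$ arises as the recession cone of some polyhedron in $\Sigma_1$ (under Assumption \ref{ass-fan}), it suffices to work locally chart-by-chart.

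The first substantive step is to observe that the correspondence underlying Theorem \ref{th-main}, together with Proposition \ref{prop-local-equiv-trivial}, gives an equivariant isomorphism $\E_{|\X_\sigma} \cong \bigoplus_{i=1}^r \L_{\tilde u_{\Delta,i}}$, in which the section of $\L_{\tilde u_{\Delta,i}}$ evaluates to $b_{\Delta,i}$ at $x_0$, and where the $\tilde u_{\Delta,i} \in \tilde M = M \oplus \Z$ are exactly the characters appearing in formula \eqref{equ-Phi-sigma}. Passing to the generic fiber amounts to inverting $\varpi$, which corresponds to the projection $\tilde M \to M$ onto the first factor; thus the equivariant decomposition restricts on $X_{\sigma_0}$ to $\E_{|X_{\sigma_0}} \cong \bigoplus_{i=1}^r \L_{u_{\Delta,i}}$, where $u_{\Delta,i} \in M$ is the $M$-component of $\tilde u_{\Delta,i}$. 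This generalizes the well-known behavior of toric line bundles under restriction to the generic fiber (Theorem \ref{th-tlb-over-S} and its rank-one counterpart in \cite[Chap IV, Section 3]{Kempf}).

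The second step is to compare filtrations. For a ray $\rho \in \Sigma_0(1)$ with primitive vector $v_\rho$, by the compatibility condition \eqref{equ-Klyachko-comp-condition} applied to the decomposition $\E_{|X_{\sigma_0}} \cong \bigoplus_i \L_{u_{\Delta,i}}$, the Klyachko filtration is
\[
E^\rho_j \;=\; \bigoplus_{\langle u_{\Delta,i}, v_\rho \rangle \geq j} K\, b_{\Delta,i}.
\]
On the other hand, by the computation of boundary-at-infinity in Section \ref{subsec-bldg-at-infinity}, the linear part $\Phi_0$ of the affine map $\Phi_\Delta$ of formula \eqref{equ-Phi-sigma} is the prevaluation adapted to $B_\Delta$ given by
\[
\Phi_0(y)\!\left(\sum_i \lambda_i b_{\Delta,i}\right) \;=\; \min\{\langle u_{\Delta,i}, y\rangle \mid \lambda_i \neq 0\}, \qquad y \in \sigma_0.
\]
Under the bijection between integral prevaluations and $\Z$-filtrations explained in Section \ref{subsec-Tits-bldg}, the prevaluation $\Phi_0(v_\rho)$ corresponds precisely to the filtration displayed above, so $\Phi_0(v_\rho)$ encodes $E^\rho_j$.

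Finally, I would verify gluing: because $\Phi$ is a piecewise affine map on $|\Sigma_1|$, the local affine pieces $\Phi_\Delta$ agree on shared faces, hence their linear parts agree on the intersections of the recession cones, so $\Phi_0$ is a well-defined piecewise linear map $|\Sigma_0| \to \tilde{\B}_\sph(E)$. Combining the previous step over all $\rho \in \Sigma_0(1)$ produces exactly the Klyachko data of $\E_\eta$, which by Remark \ref{rem-plm-sph-bldg} is the same as the piecewise linear map corresponding to $\E_\eta$. The main obstacle I anticipate is the first step, namely giving a precise justification that the equivariant trivialization on $\X_\sigma$ descends to one on $X_{\sigma_0}$ with characters obtained by the projection $\tilde M \to M$; this is essentially a rank-$r$ generalization of the standard toric line bundle/generic fiber correspondence, but it requires carefully tracking the $\varpi$-grading through the equivariant splitting of Proposition \ref{prop-local-equiv-trivial}.
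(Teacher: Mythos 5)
Your proposal follows essentially the same route as the paper's: Theorem \ref{th-main2} is proved there inside the combined proof of Theorems \ref{th-main} and \ref{th-main2}, by applying Proposition \ref{prop-local-equiv-trivial} chart by chart, observing that for the horizontal rays $\rho \subset N_\R \times \{0\}$ the graded pieces $\Lambda_{\rho,u}$ assemble into decreasing filtrations $E^\rho_j$ satisfying Klyachko's compatibility (so that, by Theorem \ref{th-Klyachko}, they determine $\E_{|\X_\eta}$), and then recovering these filtrations from the linear part $\Phi_{\Delta,0}$ via exactly the boundary-at-infinity limit computation you quote. Your repackaging (restrict the local splitting to the generic fiber, then invoke \eqref{equ-Klyachko-comp-condition}) is the same argument, and the step you flag as the main obstacle is in fact immediate: inverting $\varpi$ is a localization, so the free generators $s_{\sigma,i}=\chi^{u_{\sigma,i}}b_{\sigma,i}$ of $\F_\sigma$ remain free generators of $\F_\sigma\otimes_\O K$ over $S_\sigma\otimes_\O K = K[\sigma_0^\vee\cap M]$, where $\sigma_0=\recc(\Delta)$ is indeed the cone of the generic-fiber chart.

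The one concrete discrepancy is a sign in your second step. With the paper's conventions (the $u$-homogeneous component of $\F_\sigma$ is the $(-u)$-weight space, and $E^\rho_j:=\Lambda_{\rho,u}$ with $j=\langle -u, v\rangle$ as in \eqref{equ-L-rho-3}), equation \eqref{equ-L-rho-2} yields $E^\rho_j=\bigoplus_{\langle u_{\Delta,i}, -v\rangle \geq j} K\, b_{\Delta,i}$, and accordingly the paper recovers $E^\rho_j$ as the filtration of the prevaluation $\Phi_{\Delta,0}(-v,0)$, i.e.\ the linear part evaluated at the \emph{negative} of the primitive generator of $\rho$, not at $v_\rho$ as in your display. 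Your formula, pairing the characters $u_{\Delta,i}$ with $+v_\rho$, amounts to asserting that the multiset of splitting characters of $\E_{|X_{\sigma_0}}$ in the sense of Proposition \ref{prop-toric-vb-over-affine-equiv-trivial} is $\{u_{\Delta,i}\}$ rather than $\{-u_{\Delta,i}\}$; this is precisely the weight-versus-grading convention, and as written it does not match \eqref{equ-L-rho-2}--\eqref{equ-L-rho-3}. The structure of your proof survives either way (the linear part, evaluated consistently, encodes the Klyachko data, and the gluing step is as you say), but you should track this sign through Proposition \ref{prop-toric-vb-over-affine-equiv-trivial}, \eqref{equ-Klyachko-comp-condition} and \eqref{equ-L-rho-3} so that your identification of $\Phi_0$ with the Klyachko data is stated consistently with the paper's definitions.
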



\begin{theorem}    \label{th-main3}
The correspondence $\E \mapsto \Phi_\E$ gives
an equivalence of categories between the category of toric vector bundles over the toric scheme $\X_\Sigma$ and the category of integral graded piecewise linear maps from $|\Sigma|$ to $\tilde{\B}(E)$, for finite dimensional $K$-vector spaces $E$.
\end{theorem}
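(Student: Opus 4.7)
My plan is to build on Theorem \ref{th-main}, which already supplies the bijection on isomorphism classes (hence essential surjectivity), and upgrade it to an equivalence by showing the functor $\E \mapsto \Phi_\E$ is fully faithful. The functor on morphisms is defined by restriction to the distinguished fiber: an equivariant morphism $\psi: \E \to \E'$ is sent to the $K$-linear map $F = \psi_{x_0}: E \to E'$. I need to verify (i) that $F$ satisfies $\Phi_\E(x) \preceq F^*(\Phi_{\E'}(x))$ for every $x \in |\Sigma_1|$, and (ii) that every such $F$ comes from a unique equivariant morphism $\psi$.

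For (i), I would reduce to the affine case $\X_\sigma$, where both $\E$ and $\E'$ are equivariantly trivial by Proposition \ref{prop-local-equiv-trivial}. Choose bases of weight sections $\{s_i\}$ and $\{s'_j\}$ with weights $u_{\sigma, i}, u'_{\sigma, j} \in \tilde{M}$, and write $\psi(s_i) = \sum_j a_{ji} s'_j$ with $a_{ji} \in S_\sigma$. Equivariance forces each nonzero $a_{ji}$ to be a $K$-multiple of $\chi^{u'_{\sigma, j} - u_{\sigma, i}}$, and membership in $S_\sigma$ means $(u'_{\sigma, j} - u_{\sigma, i}, \val(a_{ji}(x_0))) \in \sigma^\vee \cap \tilde{M}$. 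Evaluated at any $x \in \Delta = \sigma \cap (N_\R \times \{1\})$, this is precisely the inequality $\val(a_{ji}(x_0)) + \langle u'_{\sigma, j}, x\rangle \geq \langle u_{\sigma, i}, x\rangle$. Writing $F(b_i) = \sum_j a_{ji}(x_0) b'_j$ and applying the non-Archimedean property of the additive norm $\Phi_{\E'}(x)$ adapted to $\{b'_j\}$, this per-index inequality extends to $\Phi_\E(x)(e) \leq \Phi_{\E'}(x)(F(e))$ for every $e \in E$, which is (i).

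For (ii), I would reverse the calculation. Given $F$ with $\Phi_\E(x) \preceq F^*(\Phi_{\E'}(x))$, pick a chart $\X_\sigma$ and write $F(b_{\sigma, i}) = \sum_j c_{ji} b'_{\sigma, j}$ with $c_{ji} \in K$. Evaluating the inequality on $b_{\sigma, i}$ and using that $\Phi_{\E'}(x)$ is adapted to $\{b'_{\sigma, j}\}$ yields $\val(c_{ji}) + \langle u'_{\sigma, j}, x\rangle \geq \langle u_{\sigma, i}, x\rangle$ for every $x \in \Delta$ (equivalently, by Proposition \ref{prop-F-v-v'}, $F$ sends the lattice $\Lambda_{\Phi_\E(x)}$ into $\Lambda_{\Phi_{\E'}(x)}$ at vertices). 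This is exactly the support condition for the monomial $c_{ji} \chi^{u'_{\sigma, j} - u_{\sigma, i}}$ to lie in $S_\sigma$, so setting $\psi_\sigma(s_i) := \sum_j \bigl(c_{ji} \chi^{u'_{\sigma, j} - u_{\sigma, i}}\bigr) s'_j$ defines an equivariant morphism on $\X_\sigma$. Uniqueness of $\psi_\sigma$ forces the $\psi_\sigma$ to agree on overlaps: both $\psi$ and the locally constructed morphism restrict to $F$ on the dense generic orbit of $\X_\eta$, and an equivariant morphism of vector bundles is determined by its restriction to a dense open set. Hence the pieces glue to a unique global $\psi$, giving fully faithfulness.

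The main technical obstacle I anticipate is the careful bookkeeping of weight conventions in $\tilde{M} = M \times \Z$ and the translation between (a) the condition that a monomial $c \chi^w \in K[T_\eta]$ belongs to $S_\sigma$, (b) the defining inequality for a morphism of piecewise affine maps in Definition \ref{def-morphism-pa-map}, and (c) the containment of lattices via Proposition \ref{prop-F-v-v'}. Once these three viewpoints are aligned, (i) and (ii) become mirror images of each other and the equivalence of categories follows formally; the existence of local equivariant trivializations and the fact that the construction of $\Phi_\E$ from $\E$ already passed through exactly these weight data in the proof of Theorem \ref{th-main} mean no genuinely new geometric input is required.
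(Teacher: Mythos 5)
Your proposal is correct in substance and follows the same overall strategy as the paper: morphisms of toric vector bundles are identified with the $K$-linear map $F=\psi_{x_0}$ on the distinguished fiber (by density of $T_\eta$ and equivariance), and the whole content is to show that a linear map $F$ extends to an equivariant morphism if and only if $\Phi_\E(x)\preceq F^*(\Phi_{\E'}(x))$ for all $x\in|\Sigma_1|$, with essential surjectivity supplied by Theorem \ref{th-main}. Where you genuinely diverge is in how this translation is carried out. The paper works with the $M$-graded pieces of $\F_\sigma$: the condition for $F$ to extend is recorded as the lattice containments $F(\Lambda_{\rho,u})\subset\Lambda'_{\rho,u}$ as in \eqref{equ-F-Lambda-rho}, and a separate three-way lemma (lattice containments $\Leftrightarrow$ ceiling inequalities $\Leftrightarrow$ pointwise domination), resting on Proposition \ref{prop-F-v-v'}, handles the passage to the real-valued domination; the ceiling bookkeeping is forced on the paper because the vertices of $\Sigma_1$ need not be lattice points. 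You instead express $\psi$ on each chart by its matrix of coefficients $a_{ji}\in S_\sigma$ in the equivariant trivializations of Proposition \ref{prop-local-equiv-trivial}; equivariance forces each $a_{ji}$ to be monomial, membership in $S_\sigma$ is exactly nonnegativity of the corresponding element of $\tilde M$ on $\Delta$, and adaptedness of $\Phi_{\E'}(x)$ to the basis $B'_\Delta$ converts the per-index inequalities into the norm domination and back. This buys you a cleaner argument with no ceiling functions at all (integrality is carried by $\val(c_{ji})\in\Z$ and the inequality is checked at all real points of $\Delta$, which also automatically covers the horizontal rays in $N_\R\times\{0\}$), at the modest cost of a local-to-global gluing step for the $\psi_\sigma$; your gluing argument is fine, since each $\psi_\sigma$ restricts to $1\otimes F$ on the dense orbit $\X_0$ and morphisms of locally free sheaves on an integral scheme are determined by their restriction to a dense open set.

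One point to pin down, which you yourself flag: the weight conventions. With the paper's literal convention $s_{\sigma,i}=\chi^{u_{\sigma,i}}b_{\sigma,i}$ of \eqref{equ-F-sigma} (so $u_{\sigma,i}$ is the $M$-degree, the negative of the $T$-weight), the monomial attached to $a_{ji}$ is $\chi^{u_{\sigma,i}-u'_{\sigma,j}}$, not $\chi^{u'_{\sigma,j}-u_{\sigma,i}}$, and the $S_\sigma$-membership inequality comes out as $\val(a_{ji}(x_0))+\langle u_{\sigma,i},x\rangle\geq\langle u'_{\sigma,j},x\rangle$; your displayed formulas are the ones obtained by consistently using $T$-weights instead, and it is that convention which makes the outcome agree with the domination of Definition \ref{def-morphism-pa-map}. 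This is not a gap in your argument (the same care with signs is needed to reconcile \eqref{equ-Phi-Delta-1} and the paper's norm--lattice dictionary with its own lemma), but in a final write-up you should fix one convention and use it throughout; also note that the weights live in $M$, not in $\tilde M$.
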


When $\Sigma$ is a complete fan, Theorem \ref{th-main3} can be restated in terms of piecewise affine maps. As before, for a graded piecewise linear map $\Phi: |\Sigma| \to \tilde{\B}(E)$ let $\Phi_1: |\Sigma_1| \to \tilde{\B}_\aff(E)$ denote its restriction to $|\Sigma_1|$ which is a piecewise affine map. Recall that in this case, $\Phi_0$ is the linear part of $\Phi_1$.
\begin{corollary}
Let $\Sigma$ be a complete fan in $N_\R \times \R_{\geq 0}$. Then the correspondence $\E \mapsto \Phi_{\E, 1}$ gives
an equivalence of categories between the category of toric vector bundles $\E$ over the toric scheme $\X_\Sigma$ and the category of integral piecewise affine maps $\Phi_1$ from $|\Sigma_1|$ to $\tilde{\B}_\aff(E)$, for finite dimensional $K$-vector spaces $E$.
\end{corollary}


\begin{proof}[Proofs of Theorems \ref{th-main} and \ref{th-main2}]
Recall from Section \ref{subsec-toric-schemes} that, for each $\sigma \in \Sigma$, we have the affine toric scheme $\U_\sigma = \Spec(R_{\sigma})$ where:
\begin{equation}   \label{equ-S-sigma}
R_{\sigma} = \O[\chi^u \varpi^k \mid (u, k) \in \sigma^\vee \cap \tilde{M}] \subset K[T_\eta].
\end{equation}
In particular, $R_0 = K[T_\eta]$ is the coordinate ring of the torus $T_\eta \cong (K^\times)^r$, isomorphic to the Laurent polynomial ring over $K$ in $r$ indeterminates.
The affine toric schemes $\U_{\sigma}$, $\sigma \in \Sigma$, glue together to form the toric scheme $\X = \X_\Sigma$. 
The distinguished point on the scheme $\X_0 = \Spec(K[T_\eta])$ is $x_0 =  1 \in T_\eta$.

Let $\E$ be a toric vector bundle on $\X_\Sigma$ of rank $r$. First we see how to associate to $\E$ an integral graded piecewise linear map $\Phi=\Phi_\E: |\Sigma| \to \tilde{\B}(E)$. The toric vector bundle $\E$  is equivalent to the data of a $T$-linearized sheaf $\F$ of locally free $\O_\X$-modules of rank $r$ on $\X$. Let $E = \E_{x_0}$ which is an $r$-dimensional $K$-vector space. For each cone $\sigma \in \Sigma$, consider the $M$-graded $R_{\sigma}$-module $\F_{\sigma} := H^0(\U_{\sigma}, \E_{|\U_{\sigma}})$. 
Since $\E$ is trivial over the open orbit $\X_0 = T_\eta$, we can identify $\E_{|\X_0}$ with $T_\eta \times E$ and hence $\F_0$ with $K[T_\eta] \otimes_K E$.
Then, for each $\sigma \in \Sigma$, the restriction of $\F_\sigma$ to $\X_0$ gives us an embedding $$\F_{\sigma} \hookrightarrow \F_0 = K[T_\eta] \otimes_K E.$$ 
(From now on, for the ease of notation, we may drop the tensor product sign and write $ab$ in place of $a \otimes b$.) Consider the decomposition of $\F_\sigma$ into $M$-homogeneous components: $$\F_{\sigma} = \bigoplus_{u \in M} \F_{\sigma, u},$$ where $\F_{\sigma, u} \subset \chi^{u} E \subset K[T_\eta] \otimes_K E$ is the $u$-homogeneous component of $\F_{\sigma}$ (note that the $u$-homogeneous component is the $(-u)$-weight space in $\F_\sigma$). Since the $u$-homogeneous component in $K[T_\eta] \otimes_K E$ is $\chi^{u} E$, we can write:
$$\F_{\sigma, u} = \chi^{u} \Lambda_{\sigma, u},$$ for some $\O$-submodule $\Lambda_{\sigma, u} \subset E$. 

On the other hand, by the local equivariant triviality (Proposition \ref{prop-local-equiv-trivial}) the $M$-graded $R_{\sigma}$-module $\F_{\sigma}$ is 
free of rank $r$ and we can write: 
\begin{equation} \label{equ-F-sigma-free}
\F_{\sigma} = \bigoplus_{i=1}^r R_{\sigma} \cdot s_{\sigma, i},
\end{equation} 
where $s_{\sigma, i}$ is an $M$-homogeneous section whose $M$-degree we denote by $u_{\sigma, i}$, that is, the image of $s_{\sigma, i}$ in $K[T_\eta] \otimes_K E$ lands in $\chi^{u_{\sigma, i}}E$.
Let us write:
$$s_{\sigma, i} = \chi^{u_{\sigma, i}} b_{\sigma, i}$$ for some $b_{\sigma, i} \in E$. Since \eqref{equ-F-sigma-free} is a direct sum, the set of vectors $B_\sigma = \{b_{\sigma, 1}, \ldots, b_{\sigma, r}\}$ forms a $K$-vector space basis for $E$. We thus have:
 \begin{equation}  \label{equ-F-sigma}
 \F_{\sigma} = \bigoplus_{i=1}^r \bigoplus_{(u',k') \in \sigma^\vee \cap \tilde{M}} \O \cdot\chi^{u'} \varpi^{k'} \, \chi^{u_{\sigma, i}} b_{\sigma, i}.  
 \end{equation}
To compute the $u$-homogeneous component $\F_{\sigma, u}$ in \eqref{equ-F-sigma} we should sum over all $(u', k')$ such that $u' = u - u_{\sigma, i}$ and $(u', k') \in \sigma^\vee \cap \tilde{M}$. This means that $$\langle (u - u_{\sigma, i}, k'), y \rangle \geq 0,~~ \forall y \in \sigma.$$ That is,
\begin{equation}   \label{equ-Lambda-sigma-u}
\Lambda_{\sigma, u} = \sum_{i=1}^r \sum_{k': (u - u_{\sigma, i}, k') \in \sigma^\vee \cap \tilde{M}} \O\, \varpi^{k'} b_{\sigma, i}.
\end{equation}
We note that the condition $(u - u_{\sigma, i}, k') \in \sigma^\vee$ is equivalent to $(u - u_{\sigma, i}, k') \in \rho^\vee$, for all $\rho \in \sigma(1)$. where $\sigma(1)$ denotes the set of rays in $\sigma$. Thus, from \eqref{equ-Lambda-sigma-u}, we have:   $$\Lambda_{\sigma, u} = \bigcap_{\rho \in \sigma(1)} \Lambda_{\rho, u}.$$ Hence we can conclude that the sheaf $\F$ is completely determined by the $\O$-modules $\Lambda_{\rho, u}$, for $\rho \in \Sigma(1)$ and $u \in M$.
Let $\rho$ be a ray in $\Sigma(1)$. We consider two cases:

\textbf{Case 1:} The ray $\rho$ does not lie in $N_\R \times \{0\}$. Let $(\nu, 1)$ be the unique point of intersection of $\rho$ and $N_\R \times \{1\}$. Note that $\nu$ is a rational point but in general may not be a lattice point. Then: 
\begin{align}
(u-u_{\sigma, i}, k') \in \rho^\vee \Leftrightarrow& ~\langle u - u_{\sigma, i}, \nu  \rangle + k' \geq 0 \\
\Leftrightarrow&~ k' \geq \langle u_{\sigma, i} - u, \nu \rangle \\
\Leftrightarrow&~ k' \geq \lceil \langle u_{\sigma, i} - u, v \rangle \rceil. \label{equ-k'}  
\end{align}
We recall that $\lceil r \rceil$ denotes the ceiling of a real number $r$, that is, the smallest integer greater than or equal to $r$.
From \eqref{equ-k'} we then get:
\begin{equation}  \label{equ-L-rho}
\Lambda_{\rho, u} = \bigoplus_{i=1}^r \O \, \varpi^{\lceil \langle u_{\sigma, i} - u, \nu \rangle \rceil} b_{\sigma, i},
\end{equation}
Since $B_\sigma = \{b_{\sigma, 1}, \ldots, b_{\sigma, r}\}$ is a $K$-basis for $E$ we see that, for any $u \in M$, the $\O$-module $\Lambda_{\rho, u} \subset E$ is of full rank, i.e. it is an $\O$-lattice. 

\textbf{Case 2:} The ray $\rho$ lies in $N_\R \times \{0\}$. Let $(\nu, 0)$ be a primitive vector in $\rho$. Then: 
\begin{equation}  \label{equ-k'-2}
(u - u_{\sigma, i}, k')  \in \rho^\vee \Leftrightarrow ~\langle u - u_{\sigma, i}, \nu  \rangle \geq 0. 
\end{equation}
Noting that $K = \bigcup_{k' \in \Z} \O \varpi^{k'}$ we get:
\begin{equation}  \label{equ-L-rho-2}
\Lambda_{\rho, u} = \bigoplus_{i: \langle u, \nu  \rangle \geq \langle u_{\sigma, i}, \nu  \rangle} K \cdot b_{\sigma, i}.
\end{equation}
In other words, $\Lambda_{\rho, u}$ depends only on the integer $\langle u, \nu \rangle$, that is, on the class $[u]$ of $u$ in $M_{\rho} := M / (\rho^\perp \cap M) \cong \Z$. Let us write: 
\begin{equation} \label{equ-L-rho-3}
E^\rho_j = \Lambda_{\rho, u}
\end{equation}
where $j = \langle -u, \nu \rangle$ (the reason for $-u$ instead of $u$ is to make the $\{E^\rho_j \}_{j \in \Z}$ a decreasing filtration and hence match with the convention in \cite{Klyachko}). We observe that for rays $\rho$ that lie in $\sigma \cap (N_\R \times \{0\})$, the equation \eqref{equ-L-rho-2} for the filtrations $\{E^\rho_j\}_{j \in \Z}$ is the Klyachko compatibility condition. It follows, exactly as in the classification of toric vector bundles over a field (Theorem \ref{th-Klyachko}) that $\F$ restricted to the generic fiber $\X_\eta$ is determined, up to an equivariant isomorphim, by the decreasing $\Z$-filtrations $\{E^\rho_j\}_{j \in \Z}$, $\rho \in \Sigma_0(1)$.

Now we are ready to construct the graded piecewise linear map $\Phi$ associated to the toric vector bundle $\E$. Let $\Delta \in \Sigma_1$ be a polyhedron with $\sigma \in \Sigma$ the corresponding cone over it. In this case, we denote the basis $B_\sigma = \{b_{\sigma, i}\}$ by $B_\Delta = \{b_{\Delta, i}\}$. Also, if $\nu$ is a vertex in $\Delta$ with corresponding ray $\rho \in \sigma(1)$, we denote the $\O$-module $\Lambda_{\rho, u}$ by $\Lambda_{\nu, u}$.

Let $\tilde{A}_\aff(\Delta)$ be the extended apartment in the extended building $\tilde{\B}_\aff(E)$ corresponding to the basis $B_\Delta$. Recall that it consists of additive norms that are adapted to the basis $B_\Delta$ (see Section \ref{subsec-BT-bldg}). 
Note that choice of a basis $B_\Delta$ gives an identification of the extended apartment $\tilde{A}_\aff(\Delta)$ with $\R^r$. We define the affine map $\Phi_{1, \Delta}: \Delta \to \tilde{A}_\aff(\Delta)$ to be the linear map given by $\Phi_{1, \Delta}(x) = (\langle u_{\sigma, 1}, x \rangle, \ldots, \langle u_{\sigma, r}, x \rangle).$
In terms of additive norms, $\Phi_{1, \Delta}(x): E \to \overline{\R}$ is the additive norm given by (Remark \ref{rem-pa-apt}):
\begin{equation}   \label{equ-Phi-Delta-1}
\Phi_{1, \Delta}(x)\left(\sum_i \lambda_i b_{\Delta, i}\right) = \min\{ \val(\lambda_i) + \langle u_{\sigma, i}, x \rangle \mid i=1, \ldots, r\}.
\end{equation}
For a vertex $\nu$ of the polyhedron $\Delta$ and $u \in M$, the lattice $\Lambda_{\nu, u}$ can be recovered from $\Phi_{1, \Delta}$ as
\begin{equation}   \label{equ-Phi-Delta}
	\Lambda_{\nu, u} = \bigoplus_{i=1}^r \O \varpi^{\lceil \Phi_{1, \Delta}(\nu)(b_{\Delta, i}) - \langle u, \nu \rangle \rceil} b_{\Delta, i}.
\end{equation}
We also need to check that the affine map $\Phi_{1, \Delta}$ contains the data of filtrations $\{E^\rho_j\}_{j \in \Z}$, where the $\rho \in N_\R \times \{0\}$ are horizontal rays in the cone $\sigma$ (see \eqref{equ-L-rho-2} and \eqref{equ-L-rho-3}). We see that the filtrations $\{E^\rho_j\}_{j \in \Z}$ can be recovered from the linear part $\Phi_{0, \Delta}$ of $\Phi_{1, \Delta}$. 
For any $(x, 0)$ in the linear span of the recession cone of $\Delta$, we can compute the value of the valuation $\Phi_{0, \Delta}(x, 0)$ on a vector $e = \sum_i \lambda_i b_{\Delta, i}$ by:
\begin{align*}
\Phi_{0, \Delta}(x, 0)\left(\sum_i \lambda_i b_{\Delta, i}\right) &= \lim_{t \to \infty} \frac{\min\{ \val(\lambda_i) + \langle u_{\Delta, i}, tx \rangle \mid i=1, \ldots, r\}}{t}, \\
&= \min\{\langle u_{\Delta, i}, x \rangle \mid i=1, \ldots, r\}. 
\end{align*}
Let $(\nu, 0)$ be the primitive vector for a ray $\rho$ in $\sigma$, the cone over $\Delta$.
If we let $x=-\nu$, the valuation $\Phi_{0, \Delta}(-\nu, 0)$ is given by: 
$$\Phi_{0, \Delta}(-\nu, 0)\left(\sum_i \lambda_i b_{\Delta, i}\right) = \min\{\langle u_{\Delta, i}, -\nu \rangle \mid i=1, \ldots, r \},$$ and its corresponding 
filtration $\{E_j\}_{j \in \Z}$ is: $$E_j = \textup{span}_K\{ b_{\Delta, i} \mid j \leq \langle u_{\Delta, i}, -\nu \rangle \}.$$
{But this is the same as \eqref{equ-L-rho-2}, the filtration associated to the ray $\rho \in \Sigma_0$, as required.}

It remains to show that the $\Phi_{1, \Delta}$, $\Delta \in \Sigma_1$, glue together to give a piecewise affine map $\Phi_1: |\Sigma_1| \to \tilde{\B}_\aff(E)$. To this end, we need to show that for all $\Delta, \Delta' \in \Sigma_1$, $\Phi_{1, \Delta}$ and $\Phi_{1, \Delta'}$ coincide on the intersection $\Delta \cap \Delta'$. This is the content of the next lemma. If the vertices of $\Delta \cap \Delta'$ are lattice points, the lemma is easy to show because we can remove the ceiling function in the expression for $\Lambda_{u, \nu}$ in \eqref{equ-Phi-Delta}. In general, the presence of the ceiling function adds extra details to the argument. 
\begin{lemma}  \label{lem-gluing}
Let $\nu$ be a vertex of the convex polyhedron $\Delta \cap \Delta'$. Then the additive norms $\Phi_{1, \Delta}(\nu)$ and $\Phi_{1, \Delta'}(\nu)$ coincide. 
\end{lemma}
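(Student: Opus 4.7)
My plan is to exploit the fact that in a polyhedral complex the intersection $\Delta\cap\Delta'$ is a common face of $\Delta$ and $\Delta'$, so any vertex $v$ of $\Delta\cap\Delta'$ is in fact a vertex of both $\Delta$ and $\Delta'$, and the ray $\rho=\R_{\geq 0}(v,1)$ is a ray of both $\sigma$ and $\sigma'$. The lattices $\Lambda_{\rho,u}$ appearing in \eqref{equ-L-rho} can be described intrinsically, depending only on $\F$ and on the ray $\rho$ (they read off the stalks of the weight-$u$ piece at the generic point of $D_\rho$), so they coincide whether one computes them from the chart $\X_\sigma$ using $B_\Delta$ or from the chart $\X_{\sigma'}$ using $B_{\Delta'}$. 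I will therefore show that both $\Phi_\Delta(v)$ and $\Phi_{\Delta'}(v)$ are determined by the family $\{\Lambda_{v,u}\}_{u\in M}$, which forces their equality.

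In the special case when $v$ lies in the lattice $N$, every $\langle u_{\sigma,i},v\rangle$ and every $\langle u,v\rangle$ is an integer, so the ceiling function in \eqref{equ-Phi-Delta} is the identity. The lattice $\Lambda_{v,0}$ then determines a unique integral additive norm adapted to $B_\Delta$, namely $\Phi_\Delta(v)$, via the bijection between $\O$-lattices in $E$ and integral additive norms. The same recipe applied through $\sigma'$ yields $\Phi_{\Delta'}(v)$, and equality is immediate.

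For a general (only rational) vertex $v$, the ceiling in \eqref{equ-Phi-Delta} genuinely throws away information at any single $u$, and the crux is to recover the real numbers $x_i:=\Phi_\Delta(v)(b_{\Delta,i})=\langle u_{\sigma,i},v\rangle$ from the aggregate data $\{\lceil x_i-\langle u,v\rangle\rceil\}_{u\in M}$. Both the $x_i$ and the shifts $\langle u,v\rangle$ lie in the same discrete subgroup $L=\langle M,v\rangle\subset \R$. As $u$ varies so that $\langle u,v\rangle$ sweeps across $L$, the jumps of $\lceil x_i-\,\cdot\,\rceil$ occur precisely at integer crossings, which pins down the fractional part of each $x_i$; combined with $\lceil x_i\rceil$ read off at $u=0$, this determines $x_i$ uniquely. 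Running the same reconstruction from the $\sigma'$ side yields the same additive norm, giving $\Phi_\Delta(v)=\Phi_{\Delta'}(v)$.

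The main obstacle is the one foreshadowed just before the statement: the ceiling function in \eqref{equ-Phi-Delta} is not invertible at any single $u$, so the reconstruction of $\Phi_\Delta(v)$ from lattices can only succeed by using the whole family $\{\Lambda_{v,u}\}_{u\in M}$ and exploiting the fact that the values of $\Phi_\Delta(v)$ on $B_\Delta$ and the shifts $\langle u,v\rangle$ belong to a common discrete rational subgroup of $\R$. Making this decoding step precise—in particular tracking that the set of $u\in M$ producing integer crossings is non-empty and dense enough in $L$—is the technical heart of the argument.
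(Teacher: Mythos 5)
Your approach is essentially the paper's own: you first use that the lattices $\Lambda_{v,u}$, $u\in M$, are intrinsic to the sheaf and to the common ray over $(v,1)$ (since $\Delta\cap\Delta'$ is a common face, $v$ is a vertex of both polyhedra), so that the shifted integral norms $\lceil \Phi_\Delta(v)(\cdot)-\langle u,v\rangle\rceil$ and $\lceil \Phi_{\Delta'}(v)(\cdot)-\langle u,v\rangle\rceil$ agree for every $u$ (this is exactly \eqref{equ-Phi-Phi'-1}); you then recover the real-valued norm from this family of ceilings using that all values in sight lie in the discrete group $L=\Z+\{\langle u,v\rangle \mid u\in M\}\subset \Q$. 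The paper packages this second step as a proof by contradiction, evaluating \eqref{equ-Phi-Phi'-1} at the two shifts $u=u_{\Delta,j}$ and $u=u_{\Delta',j}$ that make one side integral (so the ceiling is exact there), concluding that $\Phi_\Delta(v)(e)-\Phi_{\Delta'}(v)(e)\in\Z$ and hence zero; your ``decoding of the fractional part from the jump locations'' is the same two-sided squeeze in different words, and your separate treatment of lattice vertices is subsumed by it.

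One point to tighten: as written, you reconstruct only the values $x_i=\Phi_\Delta(v)(b_{\Delta,i})$ from the chart of $\sigma$ and the values on $B_{\Delta'}$ from the chart of $\sigma'$; knowing each norm on its own adapted basis does not by itself identify the two norms with one another, so the final sentence ``running the same reconstruction from the $\sigma'$ side yields the same additive norm'' needs justification. The remedy costs nothing: run the reconstruction at an arbitrary $e\in E$. Writing $e=\sum_i\lambda_i b_{\Delta,i}=\sum_i\lambda'_i b_{\Delta',i}$, both values $\Phi_\Delta(v)(e)=\val(\lambda_j)+\langle u_{\Delta,j},v\rangle$ and $\Phi_{\Delta'}(v)(e)=\val(\lambda'_k)+\langle u_{\Delta',k},v\rangle$ lie in $L$, and by \eqref{equ-Phi-Phi'-1} they have equal ceilings after every shift $\langle u,v\rangle$; taking $u=u_{\Delta,j}$ makes the first difference an integer, forcing $\Phi_\Delta(v)(e)\ge\Phi_{\Delta'}(v)(e)$, and $u=u_{\Delta',k}$ gives the reverse inequality, so the two norms agree pointwise. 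With that adjustment your argument is complete and coincides with the paper's.
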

\begin{proof}
 Let $B_\Delta = \{b_{\Delta, i}\}$ and $B_{\Delta'} = \{b_{\Delta', i}\}$ be the bases corresponding to $\Delta$ and $\Delta'$ respectively. We note that, by \eqref{equ-Phi-Delta}, the additive norm corresponding to $\Lambda_{\nu, u}$ is:
\begin{equation}   \label{equ-Phi-Phi'-1}
\lceil \Phi_{1, \Delta}(\nu)(\cdot) - \langle u, \nu \rangle \rceil = \lceil \Phi_{1, \Delta'}(\nu)(\cdot) - \langle u, \nu \rangle \rceil.
\end{equation}
By contradiction, suppose $\Phi_{1, \Delta}(\nu) \neq \Phi_{1, \Delta'}(\nu)$. Then for some $e \in E$, $\Phi_{1, \Delta}(\nu)(e) \neq \Phi_{1, \Delta'}(\nu)(e)$. Let $e = \sum_i \lambda_i b_{\Delta, i} = \sum_i \lambda'_i b_{\Delta', i}$. From \eqref{equ-Phi-Delta} we see that, after a reordering of basis elements if necessary, there is an index $j$ such that:
$$\Phi_{1, \Delta}(e) = \val(\lambda_j) + \langle u_{\sigma, j}, \nu \rangle \neq  \val(\lambda'_j) + \langle u_{\sigma', j}, \nu \rangle = \Phi_{1, \Delta'}(e).$$
On the other hand, in \eqref{equ-Phi-Phi'-1}, if we let $u=u_{\Delta, j}$ we get:
$$\lceil \langle u_{\Delta', j}, \nu \rangle - \langle u_{\Delta, j}, \nu \rangle \rceil = \val(\lambda_j) - \val(\lambda'_j).$$ 
Also, in \eqref{equ-Phi-Phi'-1}, if we let $u = u_{\Delta', j}$, in the same way we obtain:  
$$\lceil \langle u_{\Delta, j}, \nu \rangle - \langle u_{\Delta', j}, \nu \rangle \rceil = \val(\lambda'_i) - \val(\lambda_j).$$ 
It follows that $\langle u_{\Delta, j}, \nu \rangle - \langle u_{\Delta', j}, \nu \rangle$ is an integer. Thus, $\Phi_{1, \Delta}(\nu)(e) - \Phi_{1, \Delta'}(\nu)(e)$ is an integer. This together with \eqref{equ-Phi-Phi'-1} implies that $\Phi_{1, \Delta}(\nu)(e) = \Phi_{1, \Delta'}(\nu)(e)$. The contradiction proves the claim. 
\end{proof}

We have shown that $\Phi_{1, \Delta}$ and $\Phi_{1, \Delta'}$ coincide on the vertices of the polyhedron $\Delta \cap \Delta'$. We need to show that the linear parts of $\Phi_{1, \Delta}$ and $\Phi_{1, \Delta'}$ also coincide on the recession cone of $\Delta \cap \Delta'$. Let $\rho \in \Sigma_0$ be a ray in the recession cone of the polyhedron $\Delta \cap \Delta'$ with primitive vector $(\nu, 0)$. As we saw above, both valuations  $\Phi_{0, \Delta}(\nu, 0)$ and $\Phi_{0, \Delta'}(\nu, 0)$ on $E$ correspond to the filtration $\{ E^\rho_j \}$ on $E$ and hence coincide. 

Now $\Phi_{1, \Delta}$ and $\Phi_{1, \Delta'}$ are two affine maps on $\Delta \cap \Delta'$ that coincide on the vertices of $\Delta \cap \Delta'$ and also have the same linear parts on $\Delta \cap \Delta'$. This implies that they coincide everywhere on $\Delta \cap \Delta'$ as required. Hence the $\Phi_{1, \Delta}$, $\Delta \in \Sigma_1$, glue together to give a piecewise affine map from $|\Sigma_1|$ to the extended building $\tilde{\B}_\aff(E)$. Also, exactly as in \cite{KM-tpb, Klyachko}, we construct a piecewise linear map $\Phi_0: |\Sigma_0| \to \tilde{\B}_\sph(E)$.

Note that the above arguments show that $\Phi_0$ and $\Phi_1$ are uniquely determined by the vector bundle $\E$. This is because, as shown above, the $\Phi_{1, \Delta}$ and $\Phi_{0, \rho}$ (hence $\Phi_1$ and $\Phi_0$) are determined by the lattices $\Lambda_{\rho, u}$, $u \in M$, (for rays $\rho$ in $\Sigma$ that intersects $N_\R \times \{1\}$), and the filtrations $\Lambda_\rho$ (for rays $\rho$ that lie in $N_\R \times \{0\}$).

Moreover, the above shows that $\Phi_1$ and $\Phi_0$ are compatible in the sense of Proposition \ref{prop-Phi-0-1} and thus they determine an integral graded piecewise linear map $\Phi: |\Sigma| \to \tilde{\B}(E)$. In conclusion $\Phi$ determines the sheaf $\F$, and $\Phi_0$ gives the Klyachko's data for the restriction of the sheaf $\F$ to the generic fiber $\X_\eta$. 

Conversely, let $\Phi: |\Sigma| \to \tilde{\B}_\aff(E)$ be an integral graded piecewise linear map. One verifies that reversing the construction gives a toric vector bundle $\E_\Phi$ over the toric scheme $\X_{\Sigma}$.
\end{proof}

Next we prove the claim about the equivalence of categories. Again, if all the vertices in the the polyhedral complex $\Sigma_1$ are lattice points, then arguments are easier. In general, we need extra details involving the ceiling function. 

\begin{proof}[Proof of Theorem \ref{th-main3}]
A $T$-equivariant morphism of vector bundles is the same as a $T$-equivariant morphism of the corresponding locally free sheaves of modules. 
Let $\E$, $\E'$ be toric vector bundles on $\X = \X_\Sigma$ with the corresponding sheaves of $\O_\X$-modules $\F$ and $\F'$ respectively. Let $F: \F \to \F'$ be a $T$-equivariant morphism. 
Since $\U_0 = T_\eta$ is open in $\X$, $F$ is determined by its restriction to $\U_0$. On the other hand, $F$ is $T$-equivariant and hence $F_{|\U_0}$ is determined by its restriction to $E = \E_{x_0}$. We note that $F: \E_{x_0} \to \E'_{x_0}$ is a $K$-linear map. We would like to characterize which $K$-linear maps $F: \E_{x_0} \to \E'_{x_0}$ give rise to $T$-equivariant morphisms $F: \F \to \F'$ of $\O_\X$-modules. In fact, one verifies that a linear map $F$ gives rise to a $T$-equivariant morphism if and only if for any $\sigma \in \Sigma$ and any character $u \in M$ we have:
\begin{equation}    \label{equ-F-Lambda-sigma}
F(\Lambda_{\sigma, u}) \subset \Lambda'_{\sigma, u}.
\end{equation} 
We note that since $\Lambda_{\sigma, u} = \bigcap_{\rho \in \sigma(1)} \Lambda_{\rho, u}$, the above is equivalent to
\begin{equation}   \label{equ-F-Lambda-rho}
F(\Lambda_{\rho, u}) \subset \Lambda'_{\rho, u}, \quad \forall \rho \in \Sigma(1), ~\forall u \in M.
\end{equation}
The next lemma shows that the condition in \eqref{equ-F-Lambda-rho} is equivalent to 
$\Phi$ being dominated by $F^*(\Phi')$. 
This finishes the proof of Theorem \ref{th-main3}
\end{proof}

\begin{lemma}
The following are equivalent:
\begin{itemize}	
\item[(a)] 	For all $\rho \in \Sigma$ and $u \in M$,
$$F(\Lambda_{\rho, u}) \subset \Lambda'_{\rho, u},$$ where $\Lambda_{\rho, u}$ is given by \eqref{equ-L-rho} if $\rho$ intersects $N_\R \times \{1\}$ and is given by \eqref{equ-L-rho-2} otherwise.
\item[(b)] For all $x \in |\Sigma_1|$, $u \in M$ and $e \in E$:
$$\lceil \Phi_1(x)(e) - \langle u, v \rangle \rceil  \leq \lceil \Phi_1'(x)(F(e)) -  \langle u, x \rangle \rceil.$$
\item[(c)] For all $x \in |\Sigma_1|$ and $e \in E$:
$$\Phi_1(x)(e) \leq F^*(\Phi_1'(x))(e) := \Phi_1'(x)(F(e)).$$ 
\end{itemize}
\end{lemma}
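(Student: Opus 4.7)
The plan is to close the cycle $(c) \Rightarrow (b) \Rightarrow (a) \Rightarrow (c)$.

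The implication $(c) \Rightarrow (b)$ is immediate from the monotonicity of $\lceil \cdot \rceil$: subtracting $\langle u, x \rangle$ from both sides of (c) and taking ceilings preserves the inequality. For $(b) \Rightarrow (a)$, I would restrict (b) to a vertex $v$ of a ray $\rho$ meeting $N_\R \times \{1\}$. By formula \eqref{equ-Phi-Delta}, the integral additive norms $w_u(\cdot) := \lceil \Phi(v)(\cdot) - \langle u, v \rangle \rceil$ and $w'_u(\cdot) := \lceil \Phi'(v)(\cdot) - \langle u, v \rangle \rceil$ have associated lattices $\Lambda_{\rho, u}$ and $\Lambda'_{\rho, u}$ respectively, so (b) at $x = v$ reads $w_u \preceq F^*(w'_u)$, and Proposition \ref{prop-F-v-v'} delivers $F(\Lambda_{\rho, u}) \subset \Lambda'_{\rho, u}$. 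For horizontal rays $\rho \subset N_\R \times \{0\}$, whose filtrations $E^\rho_j$ are encoded in the linear part $\Phi_0$ (Theorem \ref{th-main2}), the analogous limiting argument letting $x$ tend to infinity in direction $-\rho$ inside a polyhedron whose recession cone contains $\rho$ yields $F(E^\rho_j) \subset {E'}^\rho_j$.

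The remaining direction $(a) \Rightarrow (c)$ is the main obstacle. I would fix $\Delta \in \Sigma_1$ with its adapted bases $B_\Delta = \{b_{\Delta, i}\}$, $B'_\Delta = \{b'_{\Delta, j}\}$ and record the matrix $F(b_{\Delta, i}) = \sum_j c_{ij} b'_{\Delta, j}$. The coordinate functions $\Phi_{\Delta, i}(x) := \Phi(x)(b_{\Delta, i})$ and $\Phi'_{\Delta, j}(x) := \Phi'(x)(b'_{\Delta, j})$ are integral affine on $\Delta$. Running Proposition \ref{prop-F-v-v'} in the reverse direction at a vertex $v$ of $\Delta$, condition (a) delivers the ceiling-form inequality $\lceil \Phi_{\Delta, i}(v) - \langle u, v \rangle \rceil \leq \val(c_{ij}) + \lceil \Phi'_{\Delta, j}(v) - \langle u, v \rangle \rceil$ for all $i, j$ and all $u \in M$. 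I would then strip the ceilings by observing that, writing $v = \tfrac{1}{d}v'$ with $v' \in N$ primitive, one has $\{\langle u, v \rangle : u \in M\} = \tfrac{1}{d}\Z$ while $\Phi_{\Delta, i}(v), \Phi'_{\Delta, j}(v) \in \tfrac{1}{d}\Z$, so $u$ can be chosen to saturate the ceilings, yielding $\Phi_{\Delta, i}(v) \leq \val(c_{ij}) + \Phi'_{\Delta, j}(v)$ for all $i, j$ with $c_{ij} \neq 0$. The same ceiling-stripping applied to (a) at the horizontal rays in $\recc(\Delta)$ gives the identical estimate at infinity in each recession direction.

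Since each $\val(c_{ij}) + \Phi'_{\Delta, j} - \Phi_{\Delta, i}$ is an affine function on $\Delta$ that is nonnegative at every vertex and has nonnegative recession slope along every direction of $\recc(\Delta)$, it is nonnegative everywhere on $\Delta$. For arbitrary $x \in \Delta$ and $e = \sum_i \lambda_i b_{\Delta, i}$, combining this with the non-Archimedean estimate $\val(\sum_i \lambda_i c_{ij}) \geq \min_i (\val(\lambda_i) + \val(c_{ij}))$ gives $\Phi'(x)(F(e)) \geq \Phi(x)(e)$, which is (c). The main difficulty is precisely this $(a) \Rightarrow (c)$ step, where transferring the discrete lattice containment at rays to a continuous inequality at every $x$ demands both the ceiling-stripping rationality argument and the extension of affine inequalities from vertices and recession directions to the whole polyhedron, glued together by the non-Archimedean triangle inequality.
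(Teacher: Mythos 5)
Your proposal is correct in substance, but it runs the equivalence cycle in the opposite orientation from the paper: the paper proves (a)$\Rightarrow$(b)$\Rightarrow$(c)$\Rightarrow$(a), while you prove (c)$\Rightarrow$(b)$\Rightarrow$(a)$\Rightarrow$(c), so your hard step is a direct (a)$\Rightarrow$(c) that the paper never carries out. The overlap is in the easy parts: your (c)$\Rightarrow$(b) is the ceiling-monotonicity observation the paper uses inside its (c)$\Rightarrow$(a), and your (b)$\Rightarrow$(a) at vertices is exactly the paper's appeal to Proposition \ref{prop-F-v-v'} applied to the integral norms $\lceil \Phi(v)(\cdot) - \langle u, v\rangle\rceil$ via \eqref{equ-Phi-Delta}; in fact you are more explicit than the paper about the horizontal rays, where $\Lambda_{\rho,u}$ in \eqref{equ-L-rho-2} is a $K$-subspace rather than a lattice and Proposition \ref{prop-F-v-v'} does not literally apply. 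Where the routes genuinely differ is the remaining third: the paper gets from (a) to (b) by comparing the ceilinged functions $\Psi_u, \Psi'_u$ at vertices and at recession directions and extending over each polyhedron, and then gets (c) from (b) by the trick of choosing $u = u'_{\Delta, j}$ so that the right-hand side becomes an integer; you instead strip the ceilings already at the vertices, using that $\Phi_{\Delta,i}(v)$, $\Phi'_{\Delta,j}(v)$ and $\langle u, v\rangle$ all lie in a common discrete subgroup of $\R$ so that $u$ can be chosen to saturate the ceiling, and then extend the honest affine inequalities $\Phi_{\Delta,i} \le \val(c_{ij}) + \Phi'_{\Delta,j}$ from vertices and recession rays to all of $\Delta$ (valid since $\Delta$ is pointed, so it is the sum of the convex hull of its vertices and $\recc(\Delta)$, whose extreme rays are rays of $\Sigma$), finishing with the non-Archimedean estimate. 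What your route buys is that you only ever compare genuine affine functions on $\Delta$, whereas the paper's $\Psi_u$ are ceilings of affine maps, so its ``vertices plus linear parts'' extension is less immediate; the cost is the explicit bookkeeping with the matrix $(c_{ij})$, and you forgo the paper's short (b)$\Rightarrow$(c) argument, which your orientation simply does not need.

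Two small repairs. In (b)$\Rightarrow$(a) for a horizontal ray $\rho$ you cannot let $x$ tend to infinity ``in direction $-\rho$'' while staying in a polyhedron whose recession cone contains $\rho$; the limit is only available along $+\rho$, and the matching of the resulting prevaluation with the filtration of \eqref{equ-L-rho-2} should be handled exactly as in the paper's identification of $E^\rho_j$ with the linear part of $\Phi_\Delta$ (the same sign conventions the paper itself uses in stating (a) and (c)). Also, when that limit is extracted from the ceiling inequality (b) rather than from (c), one should note that the ceilings perturb the values by less than $1$ and therefore vanish after dividing by $t$; with these adjustments your argument goes through.
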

\begin{proof}
(a) $\Rightarrow$ (b): Define $\Psi_u(x)(e) = \lceil \Phi_1(x)(e) - \langle u, v \rangle \rceil$ and  $\Psi'_u(x)(e) = \lceil \Phi_1'(x)(F(e)) -  \langle u, x \rangle \rceil$. Then $\Psi_u$ and $\Psi'_u$ are piecewise affine functions on $|\Sigma_1|$. The condition in (a) implies that, for any vertex $v$ in $\Sigma_1$, $\Psi_u(v) \preceq \Psi'(v)$, and moreover the linear part of $\Psi_u$ is less than or equal to the linear part of $\Psi'_u$. These put together imply that $\Psi_u(x) \preceq \Psi'_u(x)$, for all $x \in |\Sigma_1|$. 

(b) $\Rightarrow$ (c): 
Suppose by contradiction that there is $x$ and $e$ such that $\Phi_1(x)(e) > \Phi_1'(x)(F(e))$.
By the piecewise affineness of $\Phi_1'$ we know that, for any polyhedron $\Delta \in \Sigma_1$ containing $x$,  there is a basis $B'_{\Delta} = \{b'_{\Delta, 1}, \ldots, b'_{\Delta, r}\}$ for $E'$ and characters $\{u'_{\Delta, 1}, \ldots, u'_{\Delta, r}\} \subset M$ such that for all $e' = \sum_i \lambda_i b'_{\Delta, i} \in E$ we have:
$$\Phi_1'(x)(e') = \min\{ \val(\lambda_i)+\langle u'_{\Delta, i}, x\rangle \mid i=1, \ldots, r \}.$$
Let $j$ be the index where the minimum above is attained for $e' = F(e)$, thus
\begin{equation}   \label{equ-Phi'-F-e}
\Phi_1'(x)(F(e)) = \val(\lambda_j) + \langle u'_{\Delta, j}, x \rangle.
\end{equation}
Letting $u = u'_{\Delta, j}$, we see from \eqref{equ-Phi'-F-e}  that $\Phi_1'(x)(F(e)) - \langle u, x\rangle \in \Z$. But 
$$\Phi_1(x)(e) - \langle u, x \rangle  > \Phi_1'(x)(F(e)) - \langle u, x \rangle.$$ Since the right-hand side is an integer we see that
$$\lceil \Phi_1(x)(e) - \langle u, x \rangle \rceil > \lceil \Phi_1'(x)(F(e)) - \langle u, x \rangle \rceil,$$ which contradicts the assumption in (b).

(c) $\Rightarrow$ (a):
First suppose for all $x \in |\Sigma|$ and $e \in E$ we have
$\Phi_1(x)(e) \leq F^*(\Phi_1'(x))(e) := \Phi_1'(x)(F(e))$.
Then, for all $u \in M$, $x \in |\Sigma|$, $e \in E$ one has
\begin{align*}
	\Phi_1(x)(e) - \langle x, u \rangle &\leq \Phi_1'(x)(F(e)) - \langle x, u \rangle, \\
	\lceil \Phi_1(x)(e) - \langle x, u \rangle \rceil &\leq \lceil \Phi_1'(x)(F(e)) - \langle x, u \rangle \rceil.
\end{align*}
From Proposition \ref{prop-F-v-v'} we now conclude that $F(\Lambda_{\rho, u}) \subset \Lambda'_{\rho, u}$ as required.
\end{proof}

\begin{example}   \label{ex-PA-map-tree}
Let the base field be the $2$-adic field $\Q_2$. Recall that the Bruhat-Tits building $\B_\aff(\Q_2^2)$ of $\GL(2, \Q_2)$ is the $3$-regular infinite tree depicted in Figure \ref{fig-Tree-Q2-Q3}. Each apartment in the Bruhat-Tits building is a two-sided infinite path in the tree. Each extended apartment is the Cartesian product of a two-sided infinite path and $\R$. 

Consider the canonical model of $\mathbb{P}^1$ over $\Q_2$. Its fan $\Sigma$ is the Cartesian product of fan of $\mathbb{P}^1$ and $\R_{\geq 0}$. The polyhedral complex $\Sigma_1$ in $N_\R \times \{1\}$ consists of three polyhedra $(-\infty, 0]$, $\{0\}$ and $[0, \infty\}$. The generic fiber is $\mathbb{P}^1$ over $\Q_2$ and the special fiber is $\mathbb{P}^1$ over the residue field $\mathbb{F}_2$. 
In Figure \ref{fig-tree-PA-map}, we have marked three one-sided infinite paths $I$, $I'$ and $I''$. We let $\tilde{I}$, $\tilde{I}'$ and $\tilde{I}''$ denote the Cartesian products of $I$, $I'$ and $I''$ with $\R$ respectively.


Consider integral piecewise affine maps $\Phi_1, \Phi_1': \Sigma \to \tilde{\B}_\aff(E)$, where $\Phi_1$ is given by some integral affine maps ${\Phi_1}_{|(-\infty, 0]} :(-\infty, 0] \to \tilde{I}$ and ${\Phi_1}_{|[0, \infty)}: [0, \infty) \to \tilde{I}'$. Similarly, $\Phi_1'$ is given by some integral affine maps ${\Phi_1'}_{|(-\infty, 0]} :[-\infty, 0] \to \tilde{I}$ and ${\Phi_1'}_{|[0, \infty)}: [0, \infty) \to \tilde{I}''$.
The piecewise affine maps $\Phi_1$ and $\Phi_1'$, give rise to toric vector bundles $\E$ and $\E'$, of rank $2$ on the canonical model of $\mathbb{P}^1$. In Example \ref{ex-split-tvb} we will show that $\E$ splits equivariantly into sum of line bundles while $\E'$ does not.
\end{example}

\begin{figure} 
	\centering
	\begin{tikzpicture}[
  grow cyclic,
  level distance=1cm,
  level/.style={
    level distance/.expanded=\ifnum#1>1 \tikzleveldistance/1.5\else\tikzleveldistance\fi,
    nodes/.expanded={\ifodd#1 fill\else fill\fi}
  },
  level 1/.style={sibling angle=120},
  level 2/.style={sibling angle=100},
  level 3/.style={sibling angle=100},
  level 4/.style={sibling angle=80},
  nodes={circle,draw,inner sep=+0pt, minimum size=2pt},
  ]
\path[rotate=-30]
  node {}
  child [black] {
    node {}
    child foreach \cntII in {1,...,2} { 
      node {}
      child foreach \cntIII in {1,...,2} {
        node {}
        child foreach \cntIV in {1,...,2} 
      }
    }
  }
  child [black] {
    node {}
    child foreach \cntII in {1,...,2} { 
      node {}
      child foreach \cntIII in {1,...,2} {
        node {}
        child foreach \cntIV in {1,...,2} 
      }
    }
  } 
  child [red] {
    node {}
    child [red] { 
      node {}
      child[black]  {
        node {}
        child foreach \cntIV in {1,...,2} 
      }
       child[red]  {
        node {}
        child[red]
        child[black]
      }
      }
     child [black] { 
      node {}
      child foreach \cntIII in {1,...,2} {
        node {}
        child foreach \cntIV in {1,...,2} 
      }
    }
  };
  \node[fill=none, draw=none] at (0.8,0.3) {\large{$\mathbf{I_1}$}};
\end{tikzpicture}

\begin{tikzpicture}[
  grow cyclic,
  level distance=1cm,
  level/.style={
    level distance/.expanded=\ifnum#1>1 \tikzleveldistance/1.5\else\tikzleveldistance\fi,
    nodes/.expanded={\ifodd#1 fill\else fill\fi}
  },
  level 1/.style={sibling angle=120},
  level 2/.style={sibling angle=100},
  level 3/.style={sibling angle=100},
  level 4/.style={sibling angle=80},
  nodes={circle,draw,inner sep=+0pt, minimum size=2pt},
  ]
\path[rotate=-30]
  node {}
  child [black] {
    node {}
    child foreach \cntII in {1,...,2} { 
      node {}
      child foreach \cntIII in {1,...,2} {
        node {}
        child foreach \cntIV in {1,...,2} 
      }
    }
  } 
  child [blue] {
    node {}
    child [blue] { 
      node {}
      child[black]  {
        node {}
        child foreach \cntIV in {1,...,2} 
      }
       child[blue]  {
        node {}
        child[blue]
        child[black]
      }
      }
     child [black] { 
      node {}
      child foreach \cntIII in {1,...,2} {
        node {}
        child foreach \cntIV in {1,...,2} 
      }
    }
  }
    child [black] {
    node {}
    child foreach \cntII in {1,...,2} { 
      node {}
      child foreach \cntIII in {1,...,2} {
        node {}
        child foreach \cntIV in {1,...,2} 
      }
    }
  };
  	\node[fill=none, draw=none] at (0.8,0.3) {\large{$\mathbf{I_2}$}};
\end{tikzpicture}
\hspace{0.5in}
\begin{tikzpicture}[
  grow cyclic,
  level distance=1cm,
  level/.style={
    level distance/.expanded=\ifnum#1>1 \tikzleveldistance/1.5\else\tikzleveldistance\fi,
    nodes/.expanded={\ifodd#1 fill\else fill\fi}
  },
  level 1/.style={sibling angle=120},
  level 2/.style={sibling angle=100},
  level 3/.style={sibling angle=100},
  level 4/.style={sibling angle=80},
  nodes={circle,draw,inner sep=+0pt, minimum size=2pt},
  ]
\path[rotate=-30]
  node {}
  child [black] {
    node {}
    child foreach \cntII in {1,...,2} { 
      node {}
      child foreach \cntIII in {1,...,2} {
        node {}
        child foreach \cntIV in {1,...,2} 
      }
    }
  }
  child [black] {
    node {}
    child foreach \cntII in {1,...,2} { 
      node {}
      child foreach \cntIII in {1,...,2} {
        node {}
        child foreach \cntIV in {1,...,2} 
      }
    }
  } 
  child [green!100!blue, thick] {
    node {}
    child [black] { 
      node {}
      child foreach \cntIII in {1,...,2} {
        node {}
        child foreach \cntIV in {1,...,2} 
      }
    }
    child [green!100!blue] { 
      node {}
      child[green!100!blue]  {
        node {}
        child[green!100!blue]
        child[black]
      }
      child[black]  {
        node {}
        child foreach \cntIV in {1,...,2} 
      }
      }
  };
  \node[fill=none, draw=none] at (0.8,0.3) {\large{$\mathbf{I'_2}$}};
\end{tikzpicture}
	\caption{One-sided infinite paths in the tree of $\Q_2$}
\label{fig-tree-PA-map}
\end{figure}

	
	\section{Equivariant splitting of toric vector bundles on toric schemes}   \label{sec-splitting}
In this section we give a simple criterion for splitting of a toric vector bundle into a sum of toric vector bundles in terms of its associated piecewise affine map. It extends the similar criterion for splitting of toric vector bundles over a field (see \cite{Klyachko} as well as \cite[p. 5, A little application]{KM-tpb}).
\begin{definition} \label{def-equiv-splitting}
We say that a toric vector bundle $\E$ \emph{splits equivariantly} if it is equivariantly isomorphic to a direct sum of toric line bundles. 
\end{definition}

\begin{theorem}[Criterion for equivariant splitting]  \label{th-splitting}
A toric vector bundle $\E$ over $\X_\Sigma$ splits equivariantly if and only if the image of the corresponding piecewise linear map $\Phi_\E$ lands in one extended apartment in $\tilde{\B}(E)$. Moreover, if $\Sigma$ is complete (and hence $\X_\Sigma$ is proper) this is equivalent to the image of the piecewise affine map $\Phi_{\E, 1}$ to land in one extended apartment in $\tilde{\B}_\aff(E)$.
	\end{theorem}
\begin{proof}
Let $\E = \underset{i=1}{\overset{r}{\bigoplus}} \L_i$ where the $\L_i$ are toric line bundles on $\X_\Sigma$. As before, the local equivariant triviality (Propositions \ref{prop-toric-vb-over-affine-equiv-trivial} and \ref{prop-local-equiv-trivial}) applied to each toric line bundle $\L_i$ implies that there exists a basis $B=\{b_1, \ldots, b_r\}$ of $E$ with $b_i\in \L_{i|_{x_0}}\subset E$, 
and for any $\sigma \in \Sigma$, there exists a multi-set of weights $\{u_{\sigma, i} \mid i=1, \ldots, r\}$, such that the corresponding piecewise linear map $\Phi_\E$ is then given by (cf. \eqref{equ-Phi-Delta-1}):
$$\Phi_\E(x, m)\left(\sum_i \lambda_i b_i\right) = \min\{ m\val(\lambda_i) + \langle u_{\sigma, i}, x \rangle \mid i=1, \ldots, r\},$$
for any $(x, m) \in \sigma \cap (N_\R \times \{m\})$. This shows that all the level $m$ additive norms $\Phi_\E(x, m)$, are adapted to the basis $B$ and hence the image of $\Phi_\E$ lands in the extended apartment $\tilde{A}(B)$.

Conversely, if there is a basis $B = \{b_i\}$ such that all the additive norms $\Phi_\B(x)$ are adapted to $B$ then, as above, one sees that $\E = \underset{i=1}{\overset{r}{\bigoplus}} \L_i$ where $\L_i$ is the toric line bundle whose sheaf of sections $\F_i$ is given as follows: for any $\sigma \in \Sigma$, $$\F_i(\U_\sigma) = \bigoplus_{(u',k') \in \sigma^\vee \cap \tilde{M}} \O \cdot\chi^{u'} \varpi^{k'} \, \chi^{u_{\sigma, i}} b_{i}.$$ 

To prove the last part, we only need to show that if the image of $\Phi_{\E, 1}$ lands in an (extended) apartment $\tilde{A}_\aff(B)=\tilde{A}_1(B)$ then the image of $\Phi_{\E, 0}$ lands in the (extended) apartment $\tilde{A}_\sph(B)=\tilde{A}_0(B)$. But this follows from the continuity of $\Phi_\E$ and the fact that the intersection of the closure of $\sqcup_{m>0} \tilde{A}_m(B)$ with $\tilde{\B}_0(E)$ is $\tilde{A}_0(B)$.   
\end{proof}

	
\begin{example}   \label{ex-split-tvb}
In Example \ref{ex-PA-map-tree} the image of $\Phi_1$ lands in an apartment, that is, an infinite two-sided path, while the image of $\Phi_2$ does not. 
It follows from Theorem  \ref{th-splitting} that the toric vector bundle $\E_{\Phi_1}$, associated to the piecewise affine map $\Phi_1$, is equivariantly split while $\E_{\Phi_2}$, associated to $\Phi_2$, is not. This is in contrast with the field case: over a field, any toric vector bundle over $\mathbb{P}^1$ splits equivariantly (see \cite[Theorem 6.1.2]{Klyachko} as well as \cite[p. 5, A little application]{KM-tpb}). The example of $\E_{\Phi_2}$ shows that this is not the case over a discrete valuation ring. 
\end{example}

\begin{example}[A Helly's theorem for the tree of $\GL(2, \Q_p)$]	   \label{ex-Helly-GL2}
We observe that a version of Kyachko's Helly's theorem (\cite[Theorem 6.1.2]{Klyachko}) holds for the infinite $(p+1)$-regular tree $T_p$: Let $A$ be a finite collection of vertices in the tree $T_p$ such that any $3$ vertices in $A$ lie on the same path. Then all of the vertices in $A$ lie on the same path. One can give a direct combinatorial proof of the above fact as follows: By induction on $|A|$, it suffices to show if any $k-1$ vertices in $A$ lie on a path then any $k$ vertices also lie on a path. Take vertices $v_i \in A$, $i=1, \ldots, k$. Let $P$ be a path containing the vertices $v_i \in A$, $i=1, \ldots k-1$. We can assume that $v_1$ and $v_{k-1}$ are the first and last vertices in $A$ that appear in $P$. We recall that, in a tree, there is a unique path joining any two given vertices. Now by assumption, $v_1, v_{k-1}, v_k$ lie on the same path $Q$. The uniqueness of path joining $v_1$ and $v_{k-1}$ then implies that all the $v_i$, $i=1, \ldots, k$, lie on $Q$ as well. This proves the claim.    
\end{example}

\bibliographystyle{alpha}
\bibliography{biblio}
\end{document}